\documentclass[reqno,twoside,12pt,a4paper]{amsart}
\usepackage{layout}
\usepackage{latexsym}
\usepackage{amsmath}
\usepackage{amssymb}
\usepackage{ascmac}
\usepackage{mathrsfs}
\usepackage{cases} 
\usepackage{caption}
\usepackage{graphicx}
\usepackage{cite}
\usepackage{color}
\definecolor{viola}{rgb}{0.3,0,0.7}
\definecolor{ciclamino}{rgb}{0.5,0,0.5}
\definecolor{blu}{rgb}{0,0,0.7}
\definecolor{rosso}{rgb}{0.85,0,0}

\renewcommand{\d}{\, {\mathrm d}}
\newcommand{\dx}{\, {\mathrm d} x}
\newcommand{\dg}{\, {\mathrm d} \Gamma}

\newcommand{\ds}{\, {\mathrm d} s}
\newcommand{\dt}{\, {\mathrm d} t}
\newcommand{\dtau}{\, {\mathrm d} \tau}

\numberwithin{equation}{section}

\newtheorem{theorem}{Theorem}[section]

\newtheorem{lemma}[theorem]{Lemma}

\newtheorem{remark}[theorem]{Remark}

\renewenvironment{proof}{\noindent {\bf Proof.}}{\hfill $\Box$}

\allowdisplaybreaks[4]

\title[Permeability parameter asymptotics in a Cahn--Hilliard system]
{Permeability parameter asymptotics in a Cahn--Hilliard system with third type transmission conditions}

\author[P.\ Colli]{Pierluigi Colli}
\address{Pierluigi Colli: Dipartimento di Matematica ``F. Casorati'', Universit\`a di Pavia
and Research Associate at the IMATI -- C.N.R. Pavia, via Ferrata 5, 27100 Pavia, Italy}
\email{pierluigi.colli@unipv.it}

\author[T.\ Fukao]{Takeshi Fukao}
\address{Takeshi Fukao: Faculty of Advanced Science and Technology, Ryukoku University, 
1-5 Yokotani, Seta Oe-cho, Otsu-shi, Shiga 520-2194, Japan}
\email{fukao@math.ryukoku.ac.jp}

\author[K.\ F.\ Lam]{Kei Fong Lam}
\address{Kei Fong Lam: Department of Mathematics, 
Hong Kong Baptist University, 
Kowloon Tong, Hong Kong}
\email{akflam@hkbu.edu.hk}

\dedicatory{}
\begin{document}

\thispagestyle{empty}

\begin{abstract}

In this paper, 
we study a system in which the Cahn--Hilliard system is imposed in the bulk domain 
and an Allen--Cahn type equation is prescribed on the boundary, connected through a third type transmission condition characterized by a permeability parameter. 
This setting is closely related to the theory of transmission problems, and the third type transmission condition can be viewed as a remnant of a thin-boundary description, with the permeability parameter controlling the degree of coupling between the bulk and the boundary. 
The main objective is to perform a rigorous asymptotic analysis with respect to the permeability parameter. We investigate two limits: the parameter tending to zero, corresponding to a completely impermeable boundary, and the parameter tending to infinity, corresponding to a perfectly permeable boundary where the bulk and boundary phases are fully coupled. 
For each limiting regime, we establish the convergence of solutions to the respective limit problems and characterize the resulting equations. 
\smallskip

\noindent {\sc Key words}: Nonlinear parabolic equations, permeability parameter, third type transmission condition, dynamic boundary condition, Cahn--Hilliard system \\
\smallskip
\noindent {\sc Mathematics Subject Classification 2020}:  
35K35, 
35K61, 
35B40, 
80A22 
\end{abstract}
\maketitle

\section{Introduction}
\label{intro}
\setcounter{equation}{0} 
Dynamic boundary conditions are closely connected to transmission problems.
To solve partial differential equations in domains with boundaries, 
dynamic boundary conditions involving time derivatives have been actively studied in recent years, 
along with Dirichlet, Neumann, and third\footnote{It was communicated to the authors by J.F.~Rodrigues that the commonly-termed third type/Robin boundary condition $K\partial_{\boldsymbol{\nu}} u = \phi - u$ should have been attributed to J.\ Fourier rather than to V.G.\ Robin, as there was no record of the boundary condition appearing in the latter's work (see, e.g., K.\ Gustafson, and T.\ Abe, 
The third boundary condition --- was it Robin's?, 
Math.\ Intelligencer, {\bf 20}, (1998) 63--71). In support of this observation, the authors deliberately adopt the term ``third type'' (boundary condition, or transmission condition) throughout this paper.} type boundary conditions. 
On the other hand, a transmission problem is a type of boundary value problem where two conditions are prescribed on the common interface. While this appears to be overdetermined, the conditions contain an additional unknown variable, thereby maintaining the well-posedness of the problem. 
From a modeling perspective, boundary conditions are auxiliary equations that supplement the main equation. However, when the boundary possesses a finite thickness, one must account for the dynamics not only in the main domain but also within the boundary region. 
From this viewpoint, recent research has focused on initial-boundary value problems for various partial differential equations under dynamic boundary conditions, together with related transmission problems and their asymptotic analysis. Meanwhile, when dynamic boundary conditions are regarded as a zero-thickness limit of certain transmission problems, third type conditions that characterize the permeability between the bulk domain and the thin region in transmission problems have also garnered significant interest.
\smallskip

In this paper, following the previous work in \cite{CFL19}, we consider a problem where the Cahn--Hilliard system is imposed in the interior domain and the Allen--Cahn type 
equation is prescribed on the boundary. 
In particular, we connect these equations by a third type transmission condition and perform an asymptotic analysis with respect to the parameter representing the permeability.
As is well known, the Cahn--Hilliard equation \cite{CH58} 
is a partial differential equation that describes phase separation, known as spinodal decomposition, and has traditionally  been applied to model various phenomena beyond phase separation in metal alloys. 
Similarly, the Allen--Cahn equation \cite{AC79} is a partial differential equation that describes solid-liquid phase transitions and has also contributed to modeling various phenomena. 
\smallskip

The Cahn--Hilliard system and the Allen--Cahn equation continue to attract broad research interest due to their versatility as mathematical models for a wide range of phenomena. 
Since dynamic boundary conditions can be interpreted as the zero-thickness limit of transmission problems coupling a bulk domain with a thin layer, it is therefore of considerable importance --- as carried out in this work --- to perform an asymptotic analysis of the associated permeability parameter.
\smallskip

We conclude the introduction with an outline of the paper. 
In Section~2, we first describe the initial-boundary value problem for the target partial differential equations, 
and then state the well-posedness of the intermediate problem, which serves as the starting point of the asymptotic analysis developed throughout this paper, 
in Theorem~\ref{lambda}. 
Although Theorem~\ref{lambda} is proved in Section~3 by applying the general theory of evolution equations governed by the subdifferential operators, 
we first obtain a characterization of the subdifferential. 
In Section~3, we derive uniform estimates, making explicit the dependence on $\lambda \in (0,1]$ 
and $K > 0$. 
In particular, since we perform two limiting procedures, $K \to +\infty$ and $K \to 0$, 
the reader should take care not to confuse the two. 
After proving Theorem~2.1, we consider in Section~4 the limiting procedures in the order $K \to +\infty$
and $K \to 0$. For the limit $K \to +\infty$, we substitute $\alpha := 1/K$ 
and study the limiting procedure as $\alpha \to 0$. 
Indeed, by rewriting the parameter in the third type transmission condition, 
the parameter $\alpha$ can be interpreted as a permeability parameter, 
and it becomes easier to visualize the situation in which the permeability tends to zero, 
corresponding to a state in which the bulk and boundary partial differential equations are completely decoupled. 
We note that the well-posedness of the completely split system is expected to admit a simpler proof than the one employed in this paper. 
In this sense, the significant aspect of Theorem~4.1 lies in the assertion concerning the rate of convergence as $\alpha \to 0$. The main theorem of this paper is Theorem~4.2, 
which shows that, through the asymptotic analysis as the permeability $\alpha \to +\infty$, 
equivalently $K \to 0$, the intermediate problem converges in a certain sense to the Cahn--Hilliard system equipped with Allen--Cahn type dynamic boundary conditions (cf.~\cite{CF15pier, CGS14}).

\section{Mathematical Formulation of the Problem}
\label{math}
\setcounter{equation}{0} 

\subsection{Cahn--Hilliard systems with Allen--Cahn type boundary equations}
Let $T>0$, $\Omega \subset \mathbb{R}^d$ ($d=2,3$) be the bounded domain with smooth boundary $\Gamma:=\partial \Omega$. 
For constant $K>0$, let us consider the following initial and boundary value problems: 
Find $u, \xi, \mu: (0,T) \times \Omega \to \mathbb{R}$ satisfying 
\begin{align}
	\partial _t u - \Delta \mu = 0 & \quad  {\rm in~} Q:=(0,T) \times \Omega, \label{CH1}\\
	-\Delta u + \xi + \pi (u) - f = \mu, \quad \xi \in \beta(u) & \quad {\rm in~} Q, \label{CH2}\\
	\partial_{\boldsymbol{ \nu }} \mu =0 & \quad {\rm on ~} \Sigma := (0,T) \times \Gamma, \label{NBC}\\
	\partial_{\boldsymbol{\nu}} u = \frac{1}{K} (\phi - u ) & \quad {\rm on~} \Sigma, \label{RBC}\\
	u(0) = u_0 & \quad {\rm in~}\Omega, \label{IC1}
\end{align}
along with $\phi, \psi : (0,T) \times \Gamma \to \mathbb{R}$ satisfying
\begin{align}
	\partial _t \phi - \Delta_\Gamma \phi + \psi + \pi_\Gamma (\phi) + \frac{1}{K}(\phi - u) =f_\Gamma,\quad \psi \in \beta_{\Gamma} (\phi) & \quad {\rm on~} \Sigma,\label{DBC}
	\\
	\phi(0) = \phi_0 & \quad {\rm on~}\Gamma, \label{IC2}
\end{align}
where, $\partial_t, \Delta, \partial_{\boldsymbol{\nu}}, \Delta_\Gamma$ stand for the time derivative, Laplace operator, normal derivative with respect to the normal vector $\boldsymbol{\nu}$, and Laplace--Beltrami operator, respectively; 
$f:Q \to \mathbb{R}$, 
$u_0:\Omega \to \mathbb{R}$, and 
$f_\Gamma: \Sigma \to \mathbb{R}$,  
$\phi_0: \Gamma \to \mathbb{R}$ are given functions; 
$\beta, \beta_\Gamma:\mathbb{R} \to 2^{\mathbb{R}}$ are 
possibly multivalued functions, and $\pi, \pi_\Gamma : \mathbb{R} \to \mathbb{R}$ are 
Lipschitz continuous functions, and together their sums construct the derivative of 
double well potentials in the bulk and boundary, respectively. 
The prototype setting is $\beta(r)+\pi(r)=\beta_\Gamma(r)+\pi_\Gamma(r):=r^3-r$ for $r \in \mathbb{R}$. 
Nevertheless, due to their multivalued nature, 
they can serve as nonlinear terms capable of describing various nonlinear phenomena. 
From \eqref{CH1}, \eqref{NBC}, and \eqref{IC1} we have the mean value conservation in the bulk 
\begin{equation}
	\frac{1}{|\Omega|} \int_\Omega u(t) \dx = \frac{1}{|\Omega|} \int_\Omega u_0 \dx =:m_0 
	\quad {\rm for~all~} t\in [0,T]. \label{const}
\end{equation} 
\smallskip

This problem is originally introduced in \cite[Subsection 2.4]{CFL19}. 
The crucial difference from other problems with dynamic boundary conditions lies in the terms appearing in equations \eqref{DBC} and \eqref{RBC}. 
Merging these terms leads to the well-known 
boundary equation of Allen--Cahn type
\begin{equation}
\label{pier1}
\partial _t \phi - \Delta_\Gamma \phi + \psi + \pi_\Gamma (\phi) + \partial_{\boldsymbol{\nu}} u  =f_\Gamma 
\quad \hbox{ with } \quad   \psi \in \beta_{\Gamma} (\phi) \quad \hbox{ on } \Sigma.
\end{equation}
On the other hand, while many existing studies of the dynamic boundary conditions impose the trace condition 
\begin{equation}
	u=\phi \quad {\rm on~} \Sigma \label{TC} 
\end{equation}
from the outset, 
we instead prescribe the third type transmission condition \eqref{RBC} with coefficient $\alpha:=1/K$, 
which allows a gap between the trace of the bulk order parameter $u$ and the boundary value $\phi$. In this formulation, 
the transmission condition and the trace condition are not assumed but rather recovered in the limit $K \to 0$. 
In other words, $\alpha=1/K$ can be regarded as a parameter that determines the permeability between the bulk and the surface (see, e.g., \cite{Att84, CP20, Fuk26}). 
\smallskip

As a remark, in the case of the dynamic boundary condition of Cahn--Hilliard type, this kind of intermediate situation is known as the KLLM model\cite{KLLM21} that serves to interpolate between the GMS 
model \cite{Gal06, GMS11, CF15b} and the LW model \cite{LW19, CFW20}. The GMS model corresponds to $K \to 0$ (permeability $\alpha \to +\infty$, namely it is fully permeable between the bulk and surface) 
and \eqref{TC} holds with \eqref{pier1}. On the other hand, the LW model corresponds to 
$K \to +\infty$ ($\alpha \to 0$ and the system is completely split). We refer to the review article \cite{Wu22} for an overview of mathematical results concerning Cahn--Hilliard systems with dynamic boundary conditions.

\subsection{Well-posedness for third type transmission problems}
Hereafter, we assume:
\begin{enumerate}
\item[(A1)] $\beta$, $\beta_\Gamma: \mathbb{R} \to 2^\mathbb{R}$ are maximal monotone graphs, 
that is, they are characterized by the subdifferentials $\beta = \partial \hat{\beta}$ and 
$\beta_\Gamma =\partial \hat{\beta}_\Gamma$, where 
$\hat{\beta}$, $\hat{\beta}_\Gamma : \mathbb{R} \to [0,+\infty]$ are 
lower semi-continuous and convex functions with $\hat{\beta}(0)=0$ and $\hat{\beta}_\Gamma(0)=0$; 
\item[(A2)] $\pi, \pi_\Gamma:\mathbb{R} \to \mathbb{R}$ are Lipschitz continuous functions; 
\item[(A3)] $f \in L^2(0,T;H^1(\Omega))$ and $f_\Gamma \in L^2(0,T;L^2(\Gamma))$; 
\item[(A4)] $u_0 \in H^1(\Omega)$ and $\phi_0 \in H^1(\Gamma)$; 
\item[(A5)] $D(\beta_\Gamma) \subset D(\beta)$ and there exists a constant $c_0>0$ such that 
\begin{equation}
	\hat{\beta}(r) \le c_0\bigl( 1+|r|^2+ \hat{\beta}_\Gamma(r) \bigr)
	\quad {\rm for~all~} r \in \mathbb{R};
	\label{gc}
\end{equation}
\item[(A6)] $m_0 \in {\rm int}D(\beta)$, ${\rm int}D(\beta_\Gamma) \ne \emptyset$, and the compatibility conditions 
$\hat{\beta}(u_0) \in L^1(\Omega)$, $\hat{\beta}_\Gamma (\phi_0) \in L^1(\Gamma)$ 
hold.
\end{enumerate}

\smallskip
The assumption (A1) implies that $0 \in \beta(0)$ and $0 \in \beta_\Gamma(0)$. 
Examples of $\beta,\,\pi$ and $\beta_\Gamma ,\, \pi_\Gamma$ satisfying these assumptions are given as follows, formulating them in terms of $\beta_\Gamma ,\, \pi_\Gamma \,$:
\smallskip
\begin{itemize}
\item $\beta_\Gamma(r):=r^3$, $\pi_\Gamma(r):=-r$ for $r \in \mathbb{R}$ (corresponding to the smooth double-well potential);\smallskip
\item $\beta_\Gamma(r):=\ln((1+r)/(1-r))$, $\pi_\Gamma(r):=-2cr$ for $r \in (-1,1)$ (derived from the singular potential of logarithmic type, where $c>0$ is a sufficiently large constant which breaks monotonicity);\smallskip
\item $\beta_\Gamma(r):=\partial I_{[-1,1]}(r)$, $\pi_\Gamma(r):=-r$ for $r \in [-1,1]$ (for the non-smooth potential, in this case $I_{[-1,1]}$ is the indicator function of the interval  $[-1,1] = D(\beta_\Gamma)$);\smallskip
\item $\beta_\Gamma(r):=0$, $\pi_\Gamma (r):=0$ for $r \in \mathbb{R}$ (reducing then to a linear equation). \smallskip
\end{itemize}
The possible choices of $\beta, \, \pi$ are similar to those of $\beta_\Gamma , \, \pi_\Gamma$ above, but the choices may be independent the ones from the others. Indeed, an important point is that it is possible to take different nonlinear monotone terms $\beta$ in the bulk 
and $\beta_\Gamma$ on the boundary, provided the related convex functions $\hat{\beta},\, \hat{\beta}_\Gamma$ fulfill the joint growth condition \eqref{gc}. 
For instance, if just $\hat{\beta}_\Gamma$ dominates $\hat{\beta}$ as growth, a possible 
combination is $\hat{\beta}(r):=r^4 / 4$ and $\hat{\beta}_\Gamma(r):= I_{[-1,1]}(r)$; 
conversely, if $\hat{\beta}$ has simply a quadratic growth, a choice like  $\hat{\beta}_\Gamma(r)=0$ is also admissible. The reader may also think of other possible cases. 
\smallskip

First, we establish the well-posedness of the intermediate problem under the above conditions as follows.

\begin{theorem}
\label{lambda} Assume {\rm (A1)}--{\rm (A6)}. Then, for each $K>0$ there exists a quintuplet
$(u,\xi, \mu,\phi, \psi)$ in these classes 
\begin{gather}
	u \in H^1 \bigl( 0,T;H^1(\Omega)' \bigr) \cap C\bigl( [0,T]; L^2(\Omega) \bigr) \cap L^\infty \bigl( 0,T ; H^1(\Omega) \bigr) \cap L^2 \bigl( 0,T ; H^2(\Omega) \bigr), \label{clu}\\
	\xi \in L^2 \bigl( 0,T ; L^2(\Omega) \bigr), \label{clx} \\ 
	\mu \in  L^2 \bigl( 0,T ; H^1(\Omega) \bigr), \label{clm}\\
	\phi \in H^1 \bigl( 0,T;L^2(\Gamma) \bigr) \cap C\bigl( [0,T]; H^1(\Gamma) \bigr) \cap L^2 \bigl( 0,T ; H^2(\Gamma) \bigr), \label{clphi}\\
	\psi \in L^2 \bigl( 0,T ; L^2(\Gamma) \bigr) \label{clpsi}
\end{gather}
such that 
\begin{align}
	\langle \partial _t u, y \rangle_{H^1(\Omega)', H^1(\Omega)} +\int_\Omega \nabla \mu \cdot \nabla y \dx = 0 \quad {\it for~all~} y \in H^1(\Omega), & \quad {\it a.e.\ on~}(0,T), \label{ch1}\\
	-\Delta u + \xi + \pi (u) - f  = \mu, \quad \xi \in \beta(u) & \quad {\it a.e.\ in~} Q, \label{ch2}\\
	\partial_{\boldsymbol{\nu}} u = \frac{1}{K} (\phi - u ) & \quad {\it a.e.\ on~} \Sigma, \label{rbc}\\
	u(0) = u_0 & \quad {\it a.e.\ in~}\Omega, \label{ic1}\\
	\partial _t \phi - \Delta_\Gamma \phi + \psi + \pi_\Gamma (\phi) + \frac{1}{K}(\phi - u) =f_\Gamma,\quad \psi \in \beta_{\Gamma} (\phi) & \quad {\it a.e.\ on~} \Sigma,\label{dbc}
	\\
	\phi(0) = \phi_0 & \quad {\it a.e.\ on~}\Gamma. \label{ic2}
\end{align}
Moreover, $u$, $\phi$, and $\psi$ are uniquely determined. 
If $\beta$ is single-valued, then $\mu$ and $\xi$ are also uniquely determined. 
\end{theorem}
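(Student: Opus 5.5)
We prove Theorem~\ref{lambda} with a gradient flow in a dual space, regularize the monotone terms, and pass to the limit. The introduction indicates that the result follows from the theory of evolution equations governed by subdifferentials, and that the uniform estimates are made explicit in $\lambda\in(0,1]$ and $K>0$. So $\lambda$ is the Yosida parameter.

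\emph{Functional setting.} Work with pairs $(u,\phi)$. Because of the mass conservation \eqref{const}, we use the space
\[
H_0:=\Bigl\{z\in H^1(\Omega)':\ \langle z,1\rangle=0\Bigr\}\times L^2(\Gamma),
\]
where the first component carries the inner product induced by the inverse Neumann Laplacian $\mathcal N$. Set $u=m_0+v$ and consider on $H_0$ the functional
\[
\varphi(v,\phi):=\frac12\int_\Omega|\nabla v|^2\dx+\frac12\int_\Gamma|\nabla_\Gamma\phi|^2\dg+\frac1{2K}\int_\Gamma|\phi-v-m_0|^2\dg+\int_\Omega\hat\beta(m_0+v)\dx+\int_\Gamma\hat\beta_\Gamma(\phi)\dg .
\]
It is taken on $H^1(\Omega)\times H^1(\Gamma)$ and set to $+\infty$ elsewhere, and it is proper, convex and lower semicontinuous.

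\emph{Regularized problem and subdifferential.} First replace $\hat\beta,\hat\beta_\Gamma$ by their Moreau--Yosida regularizations $\hat\beta_\lambda,\hat\beta_{\Gamma,\lambda}$. The key preliminary lemma characterizes $\partial\varphi_\lambda$. For $(v,\phi)\in D(\partial\varphi_\lambda)$ one has $v\in H^2(\Omega)$, $\phi\in H^2(\Gamma)$, and
\[
\partial\varphi_\lambda(v,\phi)=\Bigl(-\Delta u+\beta_\lambda(u)-\text{mean},\ -\Delta_\Gamma\phi+\beta_{\Gamma,\lambda}(\phi)+\tfrac1K(\phi-u)\Bigr),
\]
with $\partial_{\boldsymbol\nu}u=\frac1K(\phi-u)$ and $u=m_0+v$. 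The $H^2$ regularity comes from elliptic regularity for the Neumann problem with $L^2$ data in the bulk and $H^{1/2}$ data on the boundary, plus regularity for the Laplace--Beltrami operator. The Lipschitz terms $\pi,\pi_\Gamma$ and the sources $f,f_\Gamma$ are treated as Lipschitz perturbations. By (A3) the mean-free part of $f$ lies in $L^2(0,T;H^1(\Omega)')$. Since $\varphi_\lambda(u_0-m_0,\phi_0)<\infty$, the Brezis theory for $u'+\partial\varphi_\lambda(u)+B(u)\ni g$ with $B$ Lipschitz gives a unique strong solution, and $\mu_\lambda$ is recovered as $-\mathcal N\partial_t u_\lambda$ plus a constant.

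\emph{Uniform estimates in $\lambda$.} We need four bounds.
\begin{itemize}
\item[(i)] The energy estimate: test the bulk equation by $\mu_\lambda$, the second by $\partial_t u_\lambda$, and the boundary equation by $\partial_t\phi_\lambda$. With the bound $\hat\beta_\lambda\le\hat\beta$ this gives
\[
\|u_\lambda\|_{L^\infty(H^1)},\quad \|\phi_\lambda\|_{L^\infty(H^1(\Gamma))\cap H^1(L^2(\Gamma))},\quad \|\nabla\mu_\lambda\|_{L^2(L^2)},\quad \|\partial_tu_\lambda\|_{L^2((H^1)')} .
\]
\item[(ii)] Control of the mean of $\mu_\lambda$: use the standard Miranville--Zelik trick, testing by $u_\lambda-m_0$ and using $m_0\in\operatorname{int}D(\beta)$. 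This yields $\|\beta_\lambda(u_\lambda)\|_{L^1(\Omega)}$ and hence $\mu_\lambda$ in $L^2(H^1)$.
\item[(iii)] An $L^2$ bound on $\beta_{\Gamma,\lambda}(\phi_\lambda)$: test the boundary equation by $\beta_{\Gamma,\lambda}(\phi_\lambda)$. The coupling term $\frac1K(\phi-u)\beta_{\Gamma,\lambda}(\phi)$ is handled by Young's inequality, since the trace $u|_\Gamma$ is bounded in $L^2(L^2(\Gamma))$.
\item[(iv)] An $L^2$ bound on $\beta_\lambda(u_\lambda)$: test the second equation by $\beta_\lambda(u_\lambda)$. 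Integration by parts produces the boundary term
\[
-\int_\Gamma\partial_{\boldsymbol\nu}u\,\beta_\lambda(u)=-\frac1K\int_\Gamma(\phi-u)\beta_\lambda(u).
\]
\end{itemize}

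\emph{Main obstacle.} The boundary term in (iv) is where (A5) is needed, since it mixes $u$ and $\phi$ on $\Gamma$. I would try monotonicity: $(\phi-u)(\beta_\lambda(\phi)-\beta_\lambda(u))\ge0$, so the term is bounded by $\frac1K\int_\Gamma(\phi-u)\beta_\lambda(\phi)$. The factor $\beta_\lambda(\phi)$ must then be compared to $\beta_{\Gamma,\lambda}(\phi)$ via \eqref{gc} and $D(\beta_\Gamma)\subset D(\beta)$. Since \eqref{gc} compares primitives rather than derivatives, a cleaner alternative is the convexity bound
\[
(\phi-u)\beta_\lambda(u)\le\hat\beta_\lambda(\phi)-\hat\beta_\lambda(u)\le c_0\bigl(1+|\phi|^2+\hat\beta_{\Gamma}(\phi)\bigr).
\]
This is integrable on $\Sigma$ thanks to the energy estimate for $\hat\beta_\Gamma(\phi)$ (more precisely, its Yosida version, where compatibility of the regularizations must be checked). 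With that bound, elliptic regularity gives $u_\lambda\in L^2(H^2)$.

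\emph{Passage to the limit and uniqueness.} Let $\lambda\to0$. Aubin--Lions compactness gives strong convergence of $u_\lambda$ in $C([0,T];L^2)$ and in $L^2(H^1)$, with traces, and of $\phi_\lambda$ in $C([0,T];L^2(\Gamma))$. The inclusions $\xi\in\beta(u)$ and $\psi\in\beta_\Gamma(\phi)$ then follow from maximal monotonicity (weak--strong closedness). For uniqueness, take the difference of two solutions. Test the bulk difference by $\mathcal N(u_1-u_2)$ (the masses agree) and the boundary difference by $\phi_1-\phi_2$. The coupling terms combine into
\[
\frac1K\int_\Gamma|(\phi_1-\phi_2)-(u_1-u_2)|^2\dg\ \ge0,
\]
and Gronwall gives $u_1=u_2$ and $\phi_1=\phi_2$. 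Then $\psi$ is determined by \eqref{dbc}. If $\beta$ is single-valued, $\xi=\beta(u)$ is unique, and so is $\mu$ by \eqref{ch2}.
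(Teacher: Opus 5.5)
There is a genuine gap in the existence step for your regularized problem. In $H_0$ the first component is measured in $\|\cdot\|_*$. The first component of $\partial\varphi_\lambda$ is therefore $-\Delta_{\rm N}$ applied to the expression you wrote; this is why you correctly get $\mu_\lambda=-\mathcal N\partial_t u_\lambda+\mathrm{const}$. The same operator $-\Delta_{\rm N}$ hits every $L^2$-level term of the chemical potential. In particular, the bulk Lipschitz term enters the flow as
\[
B_1(v):=-\Delta_{\rm N}\bigl(\pi(m_0+v)-m(\pi(m_0+v))\bigr),\qquad \bigl\|B_1(v_1)-B_1(v_2)\bigr\|_*=\bigl\|\nabla\bigl(\pi(u_1)-\pi(u_2)\bigr)\bigr\|_{L^2(\Omega)}
\]
by \eqref{pier12}. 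This map is not defined on all of $H_0$. It cannot be bounded by $\|v_1-v_2\|_*$, nor even by $\|v_1-v_2\|_{L^2(\Omega)}$. So the Brezis result for $u'+\partial\varphi_\lambda(u)+B(u)\ni g$ with $B$ Lipschitz does not apply. Only $\pi_\Gamma$, which lives in the $L^2(\Gamma)$ component, is a genuine Lipschitz perturbation.

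Likewise, the forcing is $-\Delta_{\rm N}(f-m(f))$. So what you need from (A3) is $\nabla f\in L^2(Q)$, not that the mean-free part of $f$ lies in $L^2(0,T;H^1(\Omega)')$. The same issue resurfaces in your uniqueness step: "Gronwall gives" hides the interpolation \eqref{EL}, which is needed because $\|\cdot\|_*$ alone does not control the $\pi$-term.

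This is exactly why the paper avoids the $H^1(\Omega)'$ gradient flow. It adds $\lambda\mu_\lambda$ and $\lambda\partial_t u_\lambda$, so that the approximation becomes the doubly nonlinear equation \eqref{system} in $L^2(\Omega)\times L^2(\Gamma)$. There the bounded coercive operator $\lambda I+(\lambda I-\Delta_{\rm N})^{-1}$ acts on $u'$, and only the quadratic $\Phi$ is the convex part. In this setting $\beta_\lambda$ and $\pi$ are honest Lipschitz perturbations and the Colli--Visintin theory applies. The price is losing exact mass conservation (see \eqref{volume}), which produces the extra $1*\mu_\lambda$ terms in the paper's Miranville--Zelik step. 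Your exactly conservative scheme would avoid those terms, if it could be solved.

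Your route can be rescued, but it needs an argument. One option is semiconvexity:
\begin{itemize}
\item[(a)] Split $\pi=(\pi+L\,{\rm id})-L\,{\rm id}$ and put the convex primitive of $\pi+L\,{\rm id}$ into $\varphi_\lambda$.
\item[(b)] Use $\|v\|_{L^2(\Omega)}^2\le\|v\|_*\|\nabla v\|_{L^2(\Omega)}$ (from \eqref{dadefN}) to see that $\frac12\|\nabla v\|_{L^2(\Omega)}^2-\frac L2\|v\|_{L^2(\Omega)}^2+\frac C2\|v\|_*^2$ is a nonnegative, hence convex, quadratic form for $C\ge L^2/2$.
\item[(c)] The leftover perturbation is $-Cv$, which is Lipschitz in $H_0$. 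You would then have to redo the subdifferential characterization for the modified functional.
\end{itemize}
A short-time contraction with $\pi(\bar u)$ frozen is another option.

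A second, smaller point you left open in (iv) is genuinely needed. The energy estimate controls $\int_\Gamma\hat\beta_{\Gamma,\lambda}(\phi_\lambda)\dg$, not $\int_\Gamma\hat\beta_\Gamma(\phi_\lambda)\dg$, and the latter may be infinite because $\phi_\lambda$ can leave $D(\beta_\Gamma)$. The paper settles this by inserting \eqref{gc} into the inf-convolution defining $\hat\beta_\lambda$, which gives $\hat\beta_\lambda(r)\le(1+4c_0)\bigl(1+|r|^2+\hat\beta_{\Gamma,\lambda}(r)\bigr)$.

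Apart from these points, the rest follows the paper: your estimates (i)--(iv), including the convexity bound in (iv), the passage to the limit, and the uniqueness argument via $\mathcal N\bar u$ with the sign of the coupling term.
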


In this theorem, the equation \eqref{CH1} with the homogeneous Neumann boundary condition \eqref{NBC} is 
interpreted by \eqref{ch1}, in other words, the boundary condition for $\mu$ is hidden in the weak formulation.

\subsection{Application of abstract theory to approximate problem}

To discuss the existence and uniqueness, 
we employ the abstract theory of evolution equations 
governed by subdifferential operators. 
Firstly, we recall the concept of 
Yosida approximation to $\beta$ and $\beta_\Gamma$. 
Let $\lambda \in (0,1]$ be the approximation parameter. 
The Yosida approximation of $\beta$ is defined by 
\begin{equation*}
	\beta_\lambda (r):=\frac{1}{\lambda} \bigl( r-J_\lambda (r) \bigr) :=\frac{1}{\lambda} 
	\bigl( r-(I+\lambda \beta) ^{-1}(r) \bigr) \quad {\rm for~} r \in \mathbb{R},
\end{equation*}
where $J_\lambda:=(I+\lambda \beta) ^{-1}: \mathbb{R} \to \mathbb{R}$ is called the resolvent of $\beta$ (see, e.g., \cite{Bar10, Bre73}). 
From the theory of maximal monotone operators, we also know that $\beta_\lambda$ is characterized by 
the Moreau--Yosida regularization $\hat{\beta}_\lambda :\mathbb{R} \to \mathbb{R}$ defined by 
\begin{equation*}
	\hat{\beta}_\lambda(r) := \inf_{s \in \mathbb{R}} \left\{ \frac{1}{2\lambda} |r -s |^2 + \hat{\beta} (s) \right\}
	= \frac{1}{2\lambda} \bigl|r - J_\lambda (r) \bigr|^2 + \hat{\beta} \bigl( J_\lambda (r) \bigr) \quad 
	{\rm for~} r \in \mathbb{R},
\end{equation*} 
and such that $\beta_\lambda = \hat{\beta}_\lambda^{\, \prime}$.
Then, we see that
\begin{equation}
\label{pier11}
0 \le \hat{\beta}_\lambda(r) \le \hat {\beta}(r) \quad \hbox{for all $r \in \mathbb{R}$}. 
\end{equation}
We define the approximations $\beta_{\Gamma,\lambda}$ and $\hat{\beta}_{\Gamma,\lambda}$ to $\beta_\Gamma$ and $\hat{\beta}_\Gamma$, respectively, in a similar fashion. 
\smallskip 

Next, we introduce another approximation to the Cahn--Hilliard system. 
Adding $\lambda \mu$ to \eqref{ch1}, and $\lambda \partial _t u$ to \eqref{ch2}
we temporarily replace the target equations \eqref{ch1}--\eqref{ch2} 
with the following equations.
\begin{align*}
	\partial _t u  + \lambda \mu -\Delta \mu =0, & \\ 
	\lambda \partial _t u - \mu -\Delta u =f - \beta_\lambda(u) - \pi (u)& 
	\quad {\rm a.e.\ in~} Q. 
\end{align*}
Here, using the homogeneous Neumann Laplacian $\Delta _{\rm N}$ we rewrite the first equation with \eqref{NBC} to 
$-\mu = (\lambda I-\Delta _{\rm N})^{-1} \partial _t u$. 
We define a convex functional $\Phi: L^2(\Omega) \times L^2(\Gamma) \to [0,+\infty]$
as follows
\begin{equation*}
	\Phi (u, \phi)
	:= 
	\begin{cases}
	\displaystyle 
	\frac{1}{2} \int_{\Omega}|\nabla u|^2 \dx 
	+ 
	\frac{1}{2} \int_\Gamma |\nabla_\Gamma \phi|^2 \dg 
	+ \frac{1}{2K} \int_\Gamma |u-\phi|^2 \dg 
	& {\rm if~} (u,\phi) \in D(\Phi), \\
	+\infty & {\rm otherwise},
	\end{cases}
\end{equation*}
where $D(\Phi):=H^1(\Omega) \times H^1(\Gamma)$ is independent of $K>0$. Therefore,  
our first target problem is the Cauchy problem of the following evolution equation:
\begin{equation}
	\begin{pmatrix}
	[\lambda I + (\lambda I - \Delta _{\rm N})^{-1}] u'(t) \\
	\phi'(t)
	\end{pmatrix}
	+ \partial \Phi 
	\begin{pmatrix}
	u(t) \\
	\phi(t)
	\end{pmatrix}
	= \begin{pmatrix}
	-\beta_\lambda \bigl( u(t) \bigr) - \pi \bigl( u(t) \bigr) +f(t) \\
	-\beta_{\Gamma,\lambda} \bigl( \phi(t) \bigr) - \pi_\Gamma \bigl( \phi(t) \bigr) +f_\Gamma(t) 
	\end{pmatrix}\label{system}
\end{equation} 
for a.a.\ $t \in (0,T)$ with 
the initial condition 
$(u(0),\phi(0))^{\sf T}=(u_0,\phi_0)^{\sf T}$ 
in $L^2(\Omega) \times L^2(\Gamma)$. 
Hereafter, we do not care about the row or the column so we omit the symbol ${\sf T}$. 
\smallskip
 
From the above, it can be seen that solving \eqref{system} amounts to solving an approximate problem for our problem \eqref{ch1}--\eqref{ic2}; this will become clear by the following characterization of the subdifferential.

\begin{lemma}\label{Lemmacha}
Let $(y,\upsilon) \in D(\Phi)$. The pair $(y^*,\upsilon^*)$ belongs to the subdifferential $\partial \Phi (y,\upsilon)$ in $L^2(\Omega) \times L^2(\Gamma)$ if and only if 
\begin{align}
	y^* = -\Delta y & \quad {\it a.e.~in~} \Omega, \label{cha1}\\
	\partial_{\boldsymbol{\nu}} y = \frac{1}{K} (\upsilon - y) & \quad {\it a.e.~on~} \Gamma, \label{cha2}\\
	\upsilon^* = -\Delta_\Gamma \upsilon + \frac{1}{K}( \upsilon - y) & \quad {\it a.e.~on~} \Gamma.
	\label{cha3}
\end{align}
This implies that $\partial \Phi (y,\upsilon)$ is a singleton and 
\begin{equation}
	D(\partial \Phi) = \left\{ (z,\zeta ) \in H^2(\Omega) \times H^2(\Gamma) : 
	\partial_{\boldsymbol{\nu}} z = \frac{1}{K} (\zeta - z) 
	\ {\it a.e.\ on~} \Gamma
  \right\}. \label{domain}
\end{equation}
\end{lemma}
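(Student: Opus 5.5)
We prove the two implications separately and then read off the domain. Since $\Phi$ is a continuous quadratic form on $V:=H^1(\Omega)\times H^1(\Gamma)$, the first-order condition is the natural route: $(y^*,\upsilon^*)\in\partial\Phi(y,\upsilon)$ means $\Phi(z,\zeta)-\Phi(y,\upsilon)\ge \int_\Omega y^*(z-y)\dx+\int_\Gamma \upsilon^*(\zeta-\upsilon)\dg$ for all $(z,\zeta)\in D(\Phi)$.

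\emph{Necessity.} Take $(z,\zeta)=(y,\upsilon)\pm s(w,\eta)$ with $(w,\eta)\in V$ and $s>0$. Divide by $s$ and let $s\to0$. By quadraticity we obtain the variational identity
\begin{equation*}
\int_\Omega \nabla y\cdot\nabla w\dx+\int_\Gamma \nabla_\Gamma\upsilon\cdot\nabla_\Gamma\eta\dg+\frac1K\int_\Gamma(y-\upsilon)(w-\eta)\dg=\int_\Omega y^*w\dx+\int_\Gamma\upsilon^*\eta\dg
\end{equation*}
for all $(w,\eta)\in V$.

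\emph{Regularity (main step).} First choose $\eta=0$. Then $y$ is a weak solution of the Neumann problem $-\Delta y+y=g$, $\partial_{\boldsymbol\nu}y=h$, where $g:=y^*+y\in L^2(\Omega)$ and $h:=\frac1K(\upsilon-y)$. Here we must be careful: $h$ is a priori only in $H^{1/2}(\Gamma)$, because $y|_\Gamma\in H^{1/2}(\Gamma)$ and $\upsilon\in H^1(\Gamma)$. This is exactly the boundary datum needed for elliptic $H^2$-regularity of the Neumann problem on a smooth bounded domain. Hence $y\in H^2(\Omega)$. Taking $w\in C_c^\infty(\Omega)$ gives \eqref{cha1}, and Green's formula then yields \eqref{cha2} a.e.\ on $\Gamma$.

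Next choose $w=0$. Then $\upsilon$ solves weakly $-\Delta_\Gamma\upsilon+\upsilon=\upsilon^*+\upsilon-\frac1K(\upsilon-y)\in L^2(\Gamma)$ on the compact smooth manifold $\Gamma$. Elliptic regularity for the Laplace--Beltrami operator gives $\upsilon\in H^2(\Gamma)$ and \eqref{cha3}. The delicate point in this step is only the order of the regularity bootstrap, namely using the $H^{1/2}$ trace for the bulk problem; we expect no real obstacle.

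\emph{Sufficiency.} Suppose $(y,\upsilon)\in H^2(\Omega)\times H^2(\Gamma)$ satisfies \eqref{cha2}, and define $(y^*,\upsilon^*)$ by \eqref{cha1} and \eqref{cha3}. Integrating by parts and using \eqref{cha2} reproduces the variational identity above. Convexity of the quadratic form then gives
\begin{equation*}
\Phi(z,\zeta)-\Phi(y,\upsilon)\ge a\bigl((y,\upsilon),(z-y,\zeta-\upsilon)\bigr),
\end{equation*}
where $a$ is the associated bilinear form. Combined with the identity, this is exactly the subdifferential inequality. For $(z,\zeta)\notin D(\Phi)$ the inequality is trivial.

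\emph{Conclusion.} Uniqueness of $(y^*,\upsilon^*)$ is clear from \eqref{cha1} and \eqref{cha3}, since these determine $(y^*,\upsilon^*)$ from $(y,\upsilon)$. So $\partial\Phi(y,\upsilon)$ is a singleton. The two directions together show that $\partial\Phi(y,\upsilon)\neq\emptyset$ exactly when $(y,\upsilon)$ lies in the set in \eqref{domain}, which proves the characterization of $D(\partial\Phi)$.
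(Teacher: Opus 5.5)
Your proposal is correct and follows essentially the same route as the paper. Both arguments derive the variational identity from two-sided directional derivatives of the quadratic form, identify \eqref{cha1}--\eqref{cha3} by suitable test pairs, and obtain $H^2(\Omega)\times H^2(\Gamma)$ regularity from elliptic estimates with an $H^{1/2}(\Gamma)$ Neumann datum and from Laplace--Beltrami regularity. The only cosmetic differences are these: you apply $H^2$ regularity of the weak Neumann problem first and then read off \eqref{cha1}--\eqref{cha2}, whereas the paper first obtains \eqref{cha2} in $H^{-1/2}(\Gamma)$ via a lifting operator and the generalized Green formula and upgrades afterwards; and your sufficiency argument spells out the convexity step that the paper omits.
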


\begin{proof} Let $(y,\upsilon) \in D(\Phi)$ and $(y^*,\upsilon^*) \in \partial \Phi (y,\upsilon)$ in $L^2(\Omega) \times L^2(\Gamma)$. From the definition of the subdifferential we have 
\begin{align*}
	& (y^*,z-y)_{L^2(\Omega)} + (\upsilon^*,\zeta-\upsilon)_{L^2(\Gamma)} 
	\\
	& \le \frac{1}{2} \int_{\Omega}|\nabla z|^2 \dx 
	+ 
	\frac{1}{2} \int_\Gamma |\nabla_\Gamma \zeta|^2 \dg 
	+ \frac{1}{2K} \int_\Gamma |z-\zeta|^2 \dg \\
	& \quad {}
	-\frac{1}{2} \int_{\Omega}|\nabla y|^2 \dx 
	-
	\frac{1}{2} \int_\Gamma |\nabla_\Gamma \upsilon|^2 \dg 
	- \frac{1}{2K} \int_\Gamma |y-\upsilon|^2 \dg 
	\quad {\rm for~all~} (z,\zeta) \in L^2(\Omega) \times L^2(\Gamma).  
\end{align*}
For each $\varepsilon>0$ and $(z,\zeta) \in D(\Phi)$, taking 
$y+\varepsilon z$ as $z$, and $\upsilon+\varepsilon \zeta$ as 
$\zeta$ in the above, then 
\begin{align*}
	& (y^*,\varepsilon z)_{L^2(\Omega)} + (\upsilon^*,\varepsilon \zeta)_{L^2(\Gamma)} 
	= \bigl( y^*,(y+\varepsilon z) -y \bigr)_{L^2(\Omega)} + \bigl( 
	\upsilon^*,(\upsilon+\varepsilon \zeta) - \upsilon \bigr)_{L^2(\Gamma)} 
	\notag \\
	& \le \Phi \bigl( (y+\varepsilon z), (\upsilon+\varepsilon \zeta)\bigr) - \Phi(y, \upsilon)
	\notag 
	\\
	& \le 
\varepsilon \int_\Omega \nabla y \cdot \nabla z \dx + \frac{\varepsilon^2}{2} \int_\Omega |\nabla z|^2 \dx 
	+\varepsilon \int_\Gamma \nabla_\Gamma \upsilon \cdot \nabla_\Gamma \zeta \dg + \frac{\varepsilon^2}{2} \int_\Gamma |\nabla_\Gamma \zeta |^2 \dg \\
	& \quad {} + \frac{\varepsilon}{K} \int_\Gamma (y-\upsilon)(z-\zeta) \dg 
	+ \frac{\varepsilon^2}{2K} \int_\Gamma |z-\zeta|^2 \dg. 
\end{align*}
Dividing by $\varepsilon>0$ and letting $\varepsilon \to 0$ we obtain 
\begin{align*}
	& 
	(y^*,z)_{L^2(\Omega)} + (\upsilon^*,\zeta)_{L^2(\Gamma)} 
	\\
	& \le \int_\Omega \nabla y \cdot \nabla z \dx 
	+
	\int_\Gamma \nabla_\Gamma \upsilon \cdot \nabla_\Gamma \zeta \dg
	+ \frac{1}{K} \int_\Gamma (y-\upsilon)(z-\zeta) \dg
	\quad {\rm for~all~} (z,\zeta) \in D(\Phi).
\end{align*}
The opposite inequality can be also obtained by taking $y-\varepsilon z$ as $z$, 
and $\upsilon-\varepsilon \zeta$ as 
$\zeta$. Therefore, we conclude that $(y^*,\upsilon^*)$ satisfies
\begin{align}
	& 
	(y^*,z)_{L^2(\Omega)} + (\upsilon^*,\zeta)_{L^2(\Gamma)} 
	\notag
	\\
	& = \int_\Omega \nabla y \cdot \nabla z \dx 
	+
	\int_\Gamma \nabla_\Gamma \upsilon \cdot \nabla_\Gamma \zeta \dg
	+ \frac{1}{K} \int_\Gamma (y-\upsilon)(z-\zeta) \dg
	\label{weak}
\end{align}
for all $(z,\zeta) \in D(\Phi)$. 
\smallskip

Next, we provide the detail of characterizations \eqref{cha1}--\eqref{cha3}, step by step. 
Firstly, take 
$z \in {\mathcal D}(\Omega)$ and $\zeta \equiv 0$ in \eqref{weak}. 
Then, using the condition $z-\zeta =0$ a.e.\ on $\Gamma$ we find that 
\begin{equation*}
	y ^* = -\Delta y \quad {\rm in~} {\mathcal D}'(\Omega);
\end{equation*}
additionally, we have already known that $y^* \in L^2(\Omega)$, 
therefore we obtain \eqref{cha1} by comparison in the equation. 
Secondly, let $w \in H^{1/2}(\Gamma)$ and 
fix a linear recovery operator ${\mathcal R}:H^{1/2}(\Gamma) \to H^1(\Omega)$ such that 
$\gamma ({\mathcal R}w)= w$, where $\gamma:H^1(\Omega) \to H^{1/2}(\Gamma)$  denotes
the trace operator.
Take $z={\mathcal R}w \in H^1(\Omega)$ and $\zeta \equiv 0$ in \eqref{weak}. Then, 
using the generalized Green formula \cite[Corollary 2.6]{GR86}
and \eqref{weak} we have 
\begin{align}
	&\langle \partial _{\boldsymbol{\nu}} y, w \rangle_{H^{-1/2}(\Gamma),H^{1/2}(\Gamma)}
	 = \int_\Omega \Delta y {\mathcal R} w \dx 
	+ \int_\Omega \nabla y \cdot \nabla {\mathcal R} w \dx \notag \\
	& = -(y^*,{\mathcal R} w)_{L^2(\Omega)} + 
	\int_\Omega \nabla y \cdot \nabla {\mathcal R} w \dx 
	= \frac{1}{K} \int_\Gamma (\upsilon-y )w  \dg, \label{Green}
\end{align}
that is, \eqref{cha2} holds. 
Of course, 
that reasoning uses the fact that the equality \eqref{cha2} first holds in the $H^{-1/2}(\Gamma)$ sense, 
and subsequently, from comparison of terms and elliptic regularity, the equality holds in the $L^2(\Gamma)$ sense. Finally, take 
$z \in H^1(\Omega)$ and $\zeta \in H^1(\Gamma)$. Then, using \eqref{cha1}, \eqref{cha2}, and 
\eqref{weak} we deduce 
\begin{align*}
	& 
	(\upsilon^*,\zeta)_{L^2(\Gamma)} 
	\notag
	\\
	& = -(y^*,z)_{L^2(\Omega)} + \int_\Omega \nabla y \cdot \nabla z \dx 
	+
	\int_\Gamma \nabla_\Gamma \upsilon \cdot \nabla_\Gamma \zeta \dg
	+ \frac{1}{K} \int_\Gamma (y-\upsilon)(z-\zeta) \dg \\
	& = \int_\Gamma \partial _{\boldsymbol{\nu}} y z \dg 
	+ \int_\Gamma \nabla_\Gamma \upsilon \cdot \nabla_\Gamma \zeta \dg
	+ \frac{1}{K} \int_\Gamma (y-\upsilon)z \dg 
	- \frac{1}{K} \int_\Gamma (y-\upsilon)\zeta \dg \\
	& = \int_\Gamma \nabla_\Gamma \upsilon \cdot \nabla_\Gamma \zeta \dg
	+ \frac{1}{K} \int_\Gamma (\upsilon-y)\zeta \dg. 
\end{align*}
Thus, \eqref{cha3} can be obtained from the comparison in the equation, again. 
For $(y,\upsilon) \in H^1(\Omega) \times H^1(\Gamma)$, 
the regularities of $\Delta y \in L^2(\Omega)$ and $\partial_{\boldsymbol{\nu}} y =(1/K)(\upsilon-y)\in H^{1/2}(\Gamma)$ guarantees that $y \in H^2(\Omega)$ 
via the standard elliptic estimate (e.g., \cite[Theorem~3.2, p.~1.79]{BG87})
\begin{equation}
	\| y \|_{H^2(\Omega)}
	\le C_{\rm E}\bigl( \| \Delta  y \|_{L^2(\Omega)} + \| \partial_{\boldsymbol{\nu}} y \|_{H^{1/2}(\Gamma)}
	 + \| y \|_{L^2(\Omega)}  \bigr), 
	\label{eli}
\end{equation}
where $C_{\rm E}$ is a positive constant. 
Moreover, the property
$\Delta_\Gamma \upsilon \in L^2(\Gamma)$ ensures that $\upsilon \in H^2(\Gamma)$. 
Therefore, we conclude \eqref{domain}. 
\smallskip

To prove the inverse, that is, to prove $(y^*,\upsilon^*) \in \partial \Phi(y,\upsilon)$ from  \eqref{cha1}--\eqref{domain}, 
we use the definition of the subdifferential and the integration by part formula. 
The proof is very simple and thus omitted. 
\end{proof}
\smallskip

\begin{remark} In {\rm Lemma \ref{Lemmacha}}, we can find the following property 
\begin{equation}
	\int_\Omega y^* \dx + \int_\Gamma \upsilon^* \dg =0
	\label{remark}
\end{equation}
for the element $(y^*, \upsilon^*)$ of the subdifferential $\partial \Phi(y,\upsilon)$.  
Indeed, integrating \eqref{cha3} over $\Gamma$ and using \eqref{cha2} and \eqref{cha1}
lead to \eqref{remark}.
It can be also obtained in another way. More precisely, taking $z\equiv 1$ and $\zeta \equiv 1$ in 
\eqref{weak}, we get \eqref{remark}. 
The property \eqref{remark} is hidden in \eqref{cha1}--\eqref{cha3}. 
\end{remark}

Based on the abstract theory of doubly nonlinear evolution equation \cite{CV90} and Lipschitz perturbation treatment (see, e.g., \cite{CF15a, CF15b}), we can prove the existence of
\begin{equation*}
	(u_\lambda,\phi_\lambda) \in H^1\bigl( 0,T ; L^2(\Omega)\times L^2(\Gamma) \bigr) \cap C\bigl( [0,T]; D(\Phi) \bigr) 
	\cap L^2(0,T;D(\partial \Phi)\bigr),
\end{equation*}
which is the unique strong solution to the system \eqref{system}, where 
we needed the given data: 
$f \in L^2(0,T;L^2(\Omega))$, $f_\Gamma \in L^2(0,T;L^2(\Gamma))$, 
$u_0 \in H^1(\Omega)$, and $\phi_0 \in H^1(\Gamma)$. These follow from assumptions {\rm (A3)} and {\rm (A4)}. As a remark, for this result we do not need any compatibility condition between $u_0$ and $\phi_0$, they are completely independent.  
\smallskip

To summarize the discussion presented above, we now rewrite the system in terms of the unique triplet $(u_\lambda, \mu_\lambda, \phi_\lambda)$ in
\begin{gather*}
	u_\lambda \in H^1\bigl( 0,T ; L^2(\Omega) \bigr) \cap C\bigl( [0,T]; H^1(\Omega) \bigr) 
	\cap L^2(0,T;H^2(\Omega) \bigr), \\
	\mu_\lambda \in L^2 \bigl(0,T;H^2(\Omega) \bigr), \\
	\phi_\lambda \in H^1\bigl( 0,T ; L^2(\Gamma) \bigr) \cap C\bigl( [0,T]; H^1(\Gamma) \bigr) 
	\cap L^2(0,T;H^2(\Gamma) \bigr)
\end{gather*}
satisfying 
\begin{align}
	\partial _t u_\lambda + \lambda \mu_\lambda - \Delta \mu_\lambda = 0 & \quad  {\rm a.e.\ in~} Q, \label{ch1lam}\\
	\lambda \partial _t u_\lambda -\Delta u_\lambda + \beta_\lambda(u_\lambda) + \pi (u_\lambda) - f = \mu_\lambda
	& \quad {\rm a.e.\ in~} Q, \label{ch2lam}\\
	\partial_{\boldsymbol{ \nu }} \mu_\lambda =0 & \quad {\rm a.e.\ on ~} \Sigma, \label{nbclam}\\
	\partial_{\boldsymbol{\nu}} u_\lambda = \frac{1}{K} (\phi_\lambda - u_\lambda ) & \quad {\rm a.e.\ on~} \Sigma, \label{rbclam}\\
		u_\lambda(0) = u_0 & \quad {\rm a.e.\ in~}\Omega, \label{ic1lam}\\
	\partial _t \phi_\lambda - \Delta_\Gamma \phi_\lambda + \beta_{\Gamma,\lambda}(\phi_\lambda) + \pi_\Gamma (\phi_\lambda) + \frac{1}{K}(\phi_\lambda - u_\lambda) =f_\Gamma & \quad {\rm a.e.\ on~} \Sigma,\label{dbclam}
	\\
	\phi_\lambda(0) = \phi_0 & \quad {\rm a.e.\ on~}\Gamma. \label{ic2lam}
\end{align}
In the next section, we are going to derive uniform estimates, i.e., independent of $\lambda \in (0,1]$ and~$K>0$. 

\section{Uniform estimates and limiting procedures}
\label{UandL}
\setcounter{equation}{0} 

In this section, we obtain uniform estimates for the approximate solution which is obtained in the previous section. To deal with the limiting procedure with respect to 
$\lambda \to 0$ first, $K \to +\infty$ and $K \to 0$ second, we take care of the dependence of their parameters. 
\smallskip

At first, we define $m : H^1(\Omega)' \to \mathbb{R}$ by
\begin{equation}
\label{pier2}
m(z):=  \frac 1{|\Omega|} \langle z, 1 \rangle_{H^1(\Omega)', H^1(\Omega)} \quad \hbox{for } z \in H^1(\Omega)'. 
\end{equation}
If $z \in L^1(\Omega)$, then $m(z)$ may be expressed as $ (1/|\Omega|)\int_\Omega z \dx$. 
From the very beginning, we have known that
\begin{equation}
	m\bigl( u_\lambda (t) \bigr) 
	+ \lambda \int_0^t m \bigl( \mu_\lambda( \tau ) \bigr) \d \tau = m_0 
	\quad {\rm for~all~} t \in [0,T]. \label{volume}
\end{equation}
Indeed, integrate \eqref{ch1lam} over $(0,t) \times \Omega$ with respect to the time and space variables, and use \eqref{nbclam} along with \eqref{const}. 

\begin{lemma}\label{L1} There exist positive constants $M_1$ and $M_2$, independent of $\lambda \in (0,1]$ and 
$K>0$, such that
\begin{align}
	&\sqrt{\lambda} \| \mu_\lambda\|_{L^2(0,T;L^2(\Omega))}
	+ \| \nabla \mu_\lambda\|_{L^2(0,T;L^2(\Omega))}
	+\sqrt{\lambda}  \| \partial_t u_\lambda\|_{L^2(0,T;L^2(\Omega))} 
	+ \| u_\lambda\|_{L^\infty(0,T;H^1(\Omega))} 
	\notag \\
	& {}
	+ \bigl\| \hat{\beta}_\lambda(u_\lambda) \bigr\|_{L^\infty(0,T;L^1(\Omega))}^{1/2}
	+ \frac{1}{\sqrt{K}} \| u_\lambda-\phi_\lambda \|_{L^\infty(0,T;L^2(\Gamma))} 
	+ \| \phi_\lambda \|_{H^1(0,T;L^2(\Gamma))}
	\notag \\
	& {}
	+ \| \phi_\lambda \|_{L^\infty(0,T;H^1(\Gamma))}
	+ \bigl\| \hat{\beta}_{\Gamma,\lambda} (\phi_\lambda) \bigr\|_{L^\infty(0,T;L^1(\Gamma))}^{1/2}
	\le M_1
	\left(1
	+ \frac{1}{\sqrt{K}} \| u_0-\phi_0 \|_{L^2(\Gamma)} \right),
	\label{est1}
	\\
	&\| \partial _t u_\lambda \|_{L^2(0,T;H^1(\Omega)')} \le M_2
	\left(1
	+ \frac{1}{\sqrt{K}} \| u_0-\phi_0 \|_{L^2(\Gamma)} \right).
	\label{est2}
\end{align}
\end{lemma}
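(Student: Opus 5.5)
The plan is the standard energy estimate for the approximate problem \eqref{ch1lam}--\eqref{ic2lam}. I would test \eqref{ch1lam} by $\mu_\lambda$, \eqref{ch2lam} by $\partial_t u_\lambda$ and \eqref{dbclam} by $\partial_t\phi_\lambda$, integrate over $\Omega$ (resp.\ $\Gamma$) and over $(0,t)$, and add. The term $\int_\Omega \mu_\lambda\partial_t u_\lambda\dx$ cancels between the first two identities. Integrating $-\Delta u_\lambda\,\partial_t u_\lambda$ by parts with \eqref{rbclam} gives $\frac12\frac{d}{dt}\|\nabla u_\lambda\|^2_{L^2(\Omega)}-\frac1K\int_\Gamma(\phi_\lambda-u_\lambda)\partial_t u_\lambda\dg$. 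Combined with the term $\frac1K\int_\Gamma(\phi_\lambda-u_\lambda)\partial_t\phi_\lambda\dg$ from \eqref{dbclam}, this produces the exact derivative $\frac{d}{dt}\frac1{2K}\|u_\lambda-\phi_\lambda\|^2_{L^2(\Gamma)}$. This is the point of the structure of $\Phi$: the coupling term never has to be estimated, only integrated. We also obtain $\frac{d}{dt}\int_\Omega\hat\beta_\lambda(u_\lambda)\dx$ and $\frac{d}{dt}\int_\Gamma\hat\beta_{\Gamma,\lambda}(\phi_\lambda)\dg$. The left-hand side then collects $\lambda\|\mu_\lambda\|^2+\|\nabla\mu_\lambda\|^2+\lambda\|\partial_tu_\lambda\|^2+\|\partial_t\phi_\lambda\|^2$ integrated in time, plus the energy at time $t$. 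At $t=0$ the energy is bounded by $\frac12\|\nabla u_0\|^2+\frac12\|\nabla_\Gamma\phi_0\|^2+\frac1{2K}\|u_0-\phi_0\|^2_{L^2(\Gamma)}+\int_\Omega\hat\beta(u_0)\dx+\int_\Gamma\hat\beta_\Gamma(\phi_0)\dg$, thanks to \eqref{pier11} and (A4), (A6). This produces exactly the factor $1+K^{-1/2}\|u_0-\phi_0\|_{L^2(\Gamma)}$ after taking square roots.

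The delicate terms are $\int_\Omega(f-\pi(u_\lambda))\partial_tu_\lambda\dx$, because $\partial_tu_\lambda$ is controlled only with weight $\sqrt\lambda$. For $f$ I would use \eqref{ch1lam}, namely $\partial_tu_\lambda=\Delta\mu_\lambda-\lambda\mu_\lambda$, together with \eqref{nbclam}. This gives $\int_\Omega f\partial_tu_\lambda\dx=-\int_\Omega\nabla f\cdot\nabla\mu_\lambda\dx-\lambda\int_\Omega f\mu_\lambda\dx$, which is absorbed by Young's inequality into $\frac12\|\nabla\mu_\lambda\|^2+\frac\lambda2\|\mu_\lambda\|^2$ plus $C\|f\|^2_{H^1(\Omega)}$; here (A3) with $f\in L^2(0,T;H^1(\Omega))$ is used. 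For $\pi$ I write $\pi(u_\lambda)\partial_tu_\lambda=\partial_t\hat\pi(u_\lambda)$, where $\hat\pi(r)=\int_0^r\pi$. Then $|\hat\pi(r)|\le C(1+r^2)$, so the contribution at time $t$ is bounded by $C(1+\|u_\lambda(t)\|^2_{L^2(\Omega)})$, and at $t=0$ by $C(1+\|u_0\|^2_{L^2(\Omega)})$. On the boundary, $\int_\Gamma(f_\Gamma-\pi_\Gamma(\phi_\lambda))\partial_t\phi_\lambda\dg$ is simply handled by Young's inequality, half of $\|\partial_t\phi_\lambda\|^2$ and $C(1+\|f_\Gamma\|^2+\|\phi_\lambda\|^2_{L^2(\Gamma)})$. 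I would then control $\|\phi_\lambda(t)\|^2_{L^2(\Gamma)}\le 2\|\phi_0\|^2+2T\int_0^t\|\partial_t\phi_\lambda\|^2$.

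The main obstacle is closing the estimate with the bulk $L^2$ norm $\|u_\lambda(t)\|^2_{L^2(\Omega)}$ on the right-hand side. I would use the Poincar\'e--Wirtinger inequality, $\|u_\lambda\|^2\le C(\|\nabla u_\lambda\|^2+|m(u_\lambda)|^2)$, together with \eqref{volume}. The latter gives $|m(u_\lambda(t))|^2\le 2m_0^2+C\lambda\int_0^t\lambda\|\mu_\lambda\|^2_{L^2(\Omega)}\ds$, using Cauchy--Schwarz and $\lambda\le1$. The term with $\|\nabla u_\lambda(t)\|^2$ multiplied by a constant is then not absorbable directly. To fix this, I would first bound $\|u_\lambda(t)\|^2$ through $u_\lambda(t)=u_0+\int_0^t\partial_tu_\lambda$ in the $H^1(\Omega)'$ norm and interpolate, $\|u\|^2\le\delta\|\nabla u\|^2+C_\delta(\|u\|^2_{H^1(\Omega)'}+\ldots)$. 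Here $\|\partial_tu_\lambda\|_{H^1(\Omega)'}\le\|\nabla\mu_\lambda\|+\sqrt\lambda\|\sqrt\lambda\mu_\lambda\|$ follows from the weak form of \eqref{ch1lam}. All remaining terms are time integrals of the energy quantities, so Gronwall's lemma closes the argument. The constants depend only on $T$, $\Omega$, the Lipschitz constants and the data, never on $\lambda$ or $K$, which yields \eqref{est1}. Finally, \eqref{est2} follows by comparison: for $y\in H^1(\Omega)$, $|\langle\partial_tu_\lambda,y\rangle|\le(\sqrt\lambda\,\sqrt\lambda\|\mu_\lambda\|+\|\nabla\mu_\lambda\|)\|y\|_{H^1(\Omega)}$, and squaring and integrating in time with \eqref{est1} gives the claim.
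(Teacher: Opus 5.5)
Your testing scheme, the exact cancellation producing $\frac{1}{2K}\frac{\d}{\dt}\|u_\lambda-\phi_\lambda\|_{L^2(\Gamma)}^2$, the treatment of $f$ through \eqref{ch1lam} and \eqref{nbclam}, the boundary terms, and the comparison argument for \eqref{est2} are all fine and agree with the paper. The gap is in how you close the estimate after introducing $\hat\pi$. The identity $\int_0^t\!\int_\Omega \pi(u_\lambda)\partial_t u_\lambda = \int_\Omega \hat\pi(u_\lambda(t)) - \int_\Omega\hat\pi(u_0)$ leaves a term $C_\pi\|u_\lambda(t)\|_{L^2(\Omega)}^2$ \emph{at the final time}. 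Here $C_\pi$ depends on $\pi(0)$ and the Lipschitz constant of $\pi$, and it is not small. Your interpolation turns this into $C_\pi\delta\|\nabla u_\lambda(t)\|^2 + C_\pi C_\delta\|u_\lambda(t)\|_{H^1(\Omega)'}^2$. Bounding the second term via $u_\lambda(t)=u_0+\int_0^t\partial_t u_\lambda$ and Cauchy--Schwarz puts $C_\pi C_\delta\, t\int_0^t\bigl(\|\nabla\mu_\lambda\|^2_{L^2(\Omega)}+\lambda\|\mu_\lambda\|^2_{L^2(\Omega)}\bigr)\ds$ on the right-hand side. This is not a time integral of the energy at earlier times. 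It is the dissipation itself, with a large coefficient, since $\delta$ must be small and so $C_\delta$ is large. Gronwall's lemma cannot handle such a term. It can be absorbed into the $\frac12\int_0^t\|\nabla\mu_\lambda\|^2+\frac\lambda2\int_0^t\|\mu_\lambda\|^2$ on the left only for $t$ of order $(C_\pi C_\delta)^{-1}$. Your discarded Poincar\'e--Wirtinger attempt has the same defect through the mean value, which produces $\lambda t\int_0^t\lambda\|\mu_\lambda\|^2$ with a coefficient that is not small uniformly in $\lambda\in(0,1]$. So, as written, the argument does not close for arbitrary $T$.

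The paper avoids this by never using $\hat\pi$. It adds $(u_\lambda,\partial_t u_\lambda)_{L^2(\Omega)}$ to both sides, so that $\frac12\frac{\d}{\dt}\|u_\lambda\|_{H^1(\Omega)}^2$ stands on the left. It then treats the whole term $(u_\lambda-\pi(u_\lambda)+f,\partial_t u_\lambda)_{L^2(\Omega)}$ exactly as you treat $f$: substitute $\partial_t u_\lambda=\Delta\mu_\lambda-\lambda\mu_\lambda$, integrate by parts using \eqref{nbclam}, and apply Young's inequality. Since $\pi$ is Lipschitz, $\|u_\lambda-\pi(u_\lambda)+f\|_{H^1(\Omega)}^2\le C\bigl(1+\|u_\lambda\|_{H^1(\Omega)}^2+\|f\|_{H^1(\Omega)}^2\bigr)$. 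Every term on the right is then either absorbable or a time integral of the energy, and Gronwall applies directly; the key point is that $\pi(u_\lambda)$ only needs to be controlled inside a time integral, never pointwise at time $t$. If you prefer to keep your route, there are two repairs. One is to replace the integral bound on $\|u_\lambda(t)\|_{H^1(\Omega)'}$ by the differential one, $\frac{\d}{\dt}\|u_\lambda\|_*^2\le\varepsilon\|\partial_t u_\lambda\|_*^2+\varepsilon^{-1}\|u_\lambda\|_*^2$ with $\varepsilon$ small, and include $\|u_\lambda(t)\|_*^2$ in the Gronwall quantity. The other is to run the estimate on successive time intervals of a fixed length depending only on $C_\pi$ and $C_\delta$, and iterate.
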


\begin{proof} Multiplying \eqref{ch1lam} by $\mu_\lambda$ and \eqref{ch2lam} by $\partial_t u_\lambda$, 
integrating them over $\Omega$ with respect to the space variable, and adding them we notice a cancellation of two terms and infer that
\begin{align}
	& \lambda \| \mu_\lambda \|_{L^2(\Omega)}^2 + \int_\Omega |\nabla \mu_\lambda|^2 \dx 
	+ \lambda \| \partial_t u_\lambda \|_{L^2(\Omega)}^2 
	+ \frac{1}{2} \frac{\d}{\dt} \| u_\lambda\|_{H^1(\Omega)}^2 
	\notag \\
	& \quad {}
	+ \frac{1}{K} \int_\Gamma (u_\lambda-\phi_\lambda) \partial_t u_\lambda \dg 
	+ \frac{\d}{\dt} \int_\Omega \hat{\beta}_\lambda(u_\lambda) \dx
	\notag \\
	& = \bigl( u_\lambda - \pi(u_\lambda) + f, \partial _t u_\lambda \bigr)_{L^2(\Omega)} \notag \\
	& = \bigl( u_\lambda - \pi(u_\lambda) + f, -\lambda \mu_\lambda \bigr)_{L^2(\Omega)} 
	- \int_\Omega \nabla \bigl( u_\lambda - \pi(u_\lambda) + f \bigr)\cdot \nabla \mu_\lambda \dx \notag \\
	& \le 
	\frac{1}{2}\bigl\| u_\lambda - \pi(u_\lambda) + f \bigr\|_{H^1(\Omega)}^2
	+ 
	\frac{\lambda^2}{2} \| \mu_\lambda \|_{L^2(\Omega)}^2 
	+ \frac{1}{2} \int _\Omega |\nabla \mu_\lambda|^2 \dx\label{case1}
\end{align}
a.e.\ in $(0,T)$, where we added the term $(u_\lambda, \partial _t u_\lambda)_{L^2(\Omega)}$ to both sides and 
used the equation \eqref{ch1lam} and conditions \eqref{nbclam}, \eqref{rbclam}. 
At the same time we test \eqref{dbclam} by $\partial_t \phi_\lambda$, 
integrate over $\Gamma$, and add the term $(\phi_\lambda, \partial _t \phi_\lambda)_{L^2(\Gamma)}$ to both sides, obtaining 
\begin{align}
	& \| \partial_t \phi_\lambda\|_{L^2(\Gamma)}^2 + \frac{1}{2} \frac{\d}{\dt} \| \phi_\lambda \|_{H^1(\Gamma)}^2  
	+ \frac{\d}{\dt} \int_\Gamma \hat{\beta}_{\Gamma,\lambda} (\phi_\lambda) \dg 
	+ \frac{1}{K} \int_\Gamma (\phi_\lambda-u_\lambda) \partial_t \phi_\lambda \dg 
	\notag\\ 
	& \le \frac{1}{2} \bigl\| \phi_\lambda -\pi_\Gamma (\phi_\lambda) + f_\Gamma \bigr\|_{L^2(\Gamma)}^2
	+ \frac{1}{2} \| \partial_t \phi_\lambda \|_{L^2(\Gamma)}^2.
	\label{add}
\end{align}
By adding \eqref{case1} and \eqref{add}, we proceed and note that, as $0<\lambda\leq 1$, there exists a positive constant $C_{\rm L}$ which depends on the Lipschitz constants of $\pi$ and $\pi_\Gamma$ such that 
\begin{align}
	& \frac{\lambda}{2} \| \mu_\lambda \|_{L^2(\Omega)}^2 
	+ \frac{1}{2}\int_\Omega |\nabla \mu_\lambda|^2 \dx 
	+ \lambda \| \partial_t u_\lambda \|_{L^2(\Omega)}^2 
	+ \frac{1}{2} \frac{\d}{\dt} \| u_\lambda\|_{H^1(\Omega)}^2 
	+ \frac{\d}{\dt} \int_\Omega \hat{\beta}_\lambda(u_\lambda) \dx
		\notag \\
	& \quad {}
	+ \frac{1}{2K} \frac{\d}{\dt}\| u_\lambda-\phi_\lambda \|_{L^2(\Gamma)}^2 
	+ \frac{1}{2}\| \partial_t \phi_\lambda\|_{L^2(\Gamma)}^2 
	+ \frac{1}{2} \frac{\d}{\dt}  \| \phi_\lambda \|_{H^1(\Gamma)}^2  + \frac{\d}{\dt} \int_\Gamma \hat{\beta}_{\Gamma,\lambda} (\phi_\lambda) \dg 
	\notag \\
	& \le 
	\frac{1}{2}\bigl\| u_\lambda - \pi(u_\lambda) + f \bigr\|_{H^1(\Omega)}^2
	+\frac{1}{2} \bigl\| \phi_\lambda -\pi_\Gamma (\phi_\lambda) + f_\Gamma \bigr\|_{L^2(\Gamma)}^2 
	\notag \\
	& \le C_{\rm L} \Bigl( 1 + \| u_\lambda \|_{H^1(\Omega)}^2 + \|f\|_{H^1(\Omega)}^2 \Bigr)
	+C_{\rm L} \Bigl( 1 + \| \phi_\lambda \|_{L^2(\Gamma)}^2 + \|f_\Gamma \|_{L^2(\Gamma)}^2 \Bigr).
	\label{back}
\end{align}
Therefore, the Gronwall inequality implies that
\begin{align*}
	&
	\frac{1}{2} \| u_\lambda\|_{H^1(\Omega)}^2 
	+ \int_\Omega \hat{\beta}_\lambda(u_\lambda) \dx
	+ \frac{1}{2K} \| u_\lambda-\phi_\lambda \|_{L^2(\Gamma)}^2 
	+ \frac{1}{2}  \| \phi_\lambda \|_{H^1(\Gamma)}^2 
	+ \int_\Gamma \hat{\beta}_{\Gamma,\lambda} (\phi_\lambda) \dg 
	\notag \\
	& \le M_1'
	\left(1
	+ \frac{1}{K} \| u_0-\phi_0 \|_{L^2(\Gamma)}^2  \right)
\end{align*}
on $[0,T]$, where the constant $M_1'>0$ depends on $\| u_0 \|_{H^1(\Omega)}$,  
$\| \hat{\beta}(u_0) \|_{L^1(\Omega)}$, 
$\| \phi_0 \|_{H^1(\Gamma)}$,  
$\| \hat{\beta}_\Gamma (\phi_0) \|_{L^1(\Gamma)}$, 
$\|f\|_{L^2(0,T;H^1(\Omega))}$, and 
$\|f_\Gamma \|_{L^2(0,T;L^2(\Gamma))}$. These quantities are finite by the assumptions {\rm (A3)}, 
{\rm (A4)}, and {\rm (A6)}. 
Going back to \eqref{back} we deduce \eqref{est1}. 
\smallskip

To obtain \eqref{est2}, we recall \eqref{ch1lam} and \eqref{nbclam} to get 
\begin{align*}
	\int_0^T \bigl| \langle \partial_t u_\lambda, \eta \rangle_{H^1(\Omega)'H^1(\Omega)} \bigr| \dt
	\le \lambda \int_0^T \| \mu_\lambda \|_{L^2(\Omega)} \|\eta \|_{L^2(\Omega)} \dt
	 + \int_0^T \| \nabla \mu_\lambda \|_{L^2(\Omega)} \| \eta \|_{H^1(\Omega)} \dt 
\end{align*}
for all $\eta \in L^2(0,T;H^1(\Omega))$. Thus, \eqref{est1} entails \eqref{est2} because $\sqrt{\lambda} \le 1$. 
\end{proof}

\begin{remark}
We can argue differently in the estimate \eqref{case1} for the term involving $f$. In fact, doing an integration by parts in time we have that
\begin{align*}
	&\int_0^t (f, \partial _t u_\lambda)_{L^2(\Omega)} \d\tau \\
	&= - \int_0^t \langle \partial _t f , u_\lambda \rangle_{H^1(\Omega)',H^1(\Omega)} \dtau 
	+ \bigl \langle f(t), u_\lambda(t)  \bigr\rangle_{H^1(\Omega)',H^1(\Omega)}  
	- \bigl\langle f(0), u_0 \bigr\rangle_{H^1(\Omega)',H^1(\Omega)}.
\end{align*}
This suggests that we may be able to replace the assumption
$ f \in L^2(0,T;H^1(\Omega))$ in {\rm (A3)} with $f \in W^{1,1}(0,T;H^1(\Omega)')$
and still obtain the uniform estimate~\eqref{back}. 
However, let us underline that the regularity $f \in L^2(0,T;L^2(\Omega))$ is required in the subsequent analysis.
\end{remark}

\begin{lemma}\label{L2} There exists a positive constant $M_3$, independent of $\lambda \in (0,1]$ and 
$K>0$, such that
\begin{align}
	& 
	\bigl\| \beta _\lambda (u_\lambda ) \bigr\|_{L^2(0,T;L^1(\Omega))} 
	+ \bigl\| \beta _{\Gamma,\lambda} (\phi_\lambda ) \bigr\|_{L^2(0,T;L^1(\Gamma))} 
	\notag \\
	& \le M_3 \left(  1 + \frac{1}{K} |m_0-m_0'|^2 + \frac{1}{K} \|u_0 - \phi_0 \|_{L^2(\Gamma)}^2 \right),
	\label{est3}
\end{align}
where $m_0' \in \mathbb {R}$ is an arbitrary value lying in the interior of $D(\beta_\Gamma)$.
\end{lemma}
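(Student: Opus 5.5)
The plan is to test the approximate equations by the deviation of each unknown from a reference value in the interior of the relevant domain. This is the classical device used to bound $\beta_\lambda$ in $L^1$ for Cahn--Hilliard problems. Its key ingredient is the following inequality: for every compact interval $[a,b]\subset{\rm int}D(\beta)$ there exist $\delta>0$ and $c>0$ such that
\begin{equation*}
\beta_\lambda(r)(r-r_0)\ge \delta|\beta_\lambda(r)|-c \quad\hbox{for all } r\in\mathbb{R},\ r_0\in[a,b],\ \lambda\in(0,1].
\end{equation*}
The analogous inequality holds for $\beta_{\Gamma,\lambda}$ with the fixed point $m_0'\in{\rm int}D(\beta_\Gamma)$.

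\emph{Choice of test functions.} On $\Gamma$ I test \eqref{dbclam} by $\phi_\lambda-m_0'$. In the bulk I would rather not test \eqref{ch2lam} by $u_\lambda-m_0$, because \eqref{volume} shows that $u_\lambda$ does not have mean exactly $m_0$. Instead I test by $u_\lambda-m(u_\lambda(t))$. Then $\int_\Omega\mu_\lambda(u_\lambda-m(u_\lambda))\dx=\int_\Omega(\mu_\lambda-m(\mu_\lambda))(u_\lambda-m(u_\lambda))\dx$. By the Poincar\'e--Wirtinger inequality this is bounded by $C\|\nabla\mu_\lambda\|_{L^2(\Omega)}\|u_\lambda\|_{H^1(\Omega)}$.

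\emph{Controlling the mean.} The price of this choice is that $r_0=m(u_\lambda(t))$ must stay in a fixed compact subset of ${\rm int}D(\beta)$. By \eqref{volume}, the Cauchy--Schwarz inequality and \eqref{est1},
\begin{equation*}
|m(u_\lambda(t))-m_0|\le \lambda\int_0^T|m(\mu_\lambda)|\dt\le C\sqrt{\lambda}\,\sqrt{\lambda}\|\mu_\lambda\|_{L^2(0,T;L^2(\Omega))}\le C\sqrt{\lambda}\Bigl(1+\tfrac{1}{\sqrt K}\|u_0-\phi_0\|_{L^2(\Gamma)}\Bigr).
\end{equation*}
This forces $m(u_\lambda(t))$ to lie in a neighbourhood of $m_0$ only for $\lambda$ small in terms of the data and of $K^{-1/2}\|u_0-\phi_0\|_{L^2(\Gamma)}$. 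I expect this to be the main obstacle. The remaining range of $\lambda$ must be treated separately, or the argument run with $m_0$ and an extra term $\lambda\int_0^t m(\mu_\lambda)$ absorbed.

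\emph{Coupling terms.} Integrating $-\Delta u_\lambda$ by parts with \eqref{rbclam} and adding the boundary coupling term from \eqref{dbclam}, the interface contributions combine into
\begin{equation*}
\frac1K\int_\Gamma|u_\lambda-\phi_\lambda|^2\dg+\frac1K\int_\Gamma(u_\lambda-\phi_\lambda)\bigl(m_0'-m(u_\lambda)\bigr)\dg .
\end{equation*}
Young's inequality lets half of the first, nonnegative term absorb the cross term. What remains is $\frac{|\Gamma|}{2K}|m(u_\lambda)-m_0'|^2\le \frac CK\bigl(|m_0-m_0'|^2+\lambda(\cdots)\bigr)$. This is exactly the origin of the $\frac1K|m_0-m_0'|^2$ term in \eqref{est3}.

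\emph{Remaining terms and conclusion.} The gradient terms $\|\nabla u_\lambda\|^2$ and $\|\nabla_\Gamma\phi_\lambda\|^2$ have a good sign. The other terms are bounded pointwise in time: $\lambda(\partial_tu_\lambda,u_\lambda-m(u_\lambda))$ by $\sqrt\lambda\|\partial_tu_\lambda\|_{L^2(\Omega)}\|u_\lambda\|_{L^2(\Omega)}$, and $(\partial_t\phi_\lambda,\phi_\lambda-m_0')$ together with the Lipschitz terms $\pi,\pi_\Gamma$ and the data $f,f_\Gamma$ in the same way, all by \eqref{est1}. This yields a.e.\ in $(0,T)$
\begin{equation*}
\delta\|\beta_\lambda(u_\lambda)\|_{L^1(\Omega)}+\delta\|\beta_{\Gamma,\lambda}(\phi_\lambda)\|_{L^1(\Gamma)}\le \frac CK|m_0-m_0'|^2+b(t),
\end{equation*}
where $b$ collects $C(1+\|\nabla\mu_\lambda\|+\sqrt\lambda\|\partial_tu_\lambda\|+\|\partial_t\phi_\lambda\|+\|f\|+\|f_\Gamma\|)$ times norms bounded by \eqref{est1}. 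Taking the $L^2(0,T)$ norm gives $\|b\|_{L^2(0,T)}\le C(1+K^{-1/2}\|u_0-\phi_0\|_{L^2(\Gamma)})^2\le C(1+K^{-1}\|u_0-\phi_0\|^2_{L^2(\Gamma)})$. The constant term contributes $\sqrt T\,\frac CK|m_0-m_0'|^2$, and together these give \eqref{est3}.
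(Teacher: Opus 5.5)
\paragraph{Verdict.} There is a genuine gap, and it is the one you flagged yourself. It comes from testing the bulk equation by $u_\lambda-m(u_\lambda(t))$ instead of $u_\lambda-m_0$.

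\paragraph{What goes wrong.} With your choice, the Miranville--Zelik inequality is applied at the moving reference point $r_0=m(u_\lambda(t))$. Your bound $|m(u_\lambda(t))-m_0|\le C\sqrt\lambda\,(1+K^{-1/2}\|u_0-\phi_0\|_{L^2(\Gamma)})$ does not keep $r_0$ in a fixed compact subset of ${\rm int}D(\beta)$ for all $\lambda\in(0,1]$ and $K>0$. For example, with $D(\beta)=[-1,1]$ and $\lambda$ close to $1$, or with $K$ small and $u_0\neq\phi_0$, nothing prevents $r_0$ from approaching or leaving $\partial D(\beta)$. Hence $\delta$ and $c$ are not uniform in $\lambda$ and $K$.

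Treating the remaining range of $\lambda$ separately does not obviously help. For instance, the crude bound $|\beta_\lambda(r)|\le |r|/\lambda_0$ for $\lambda\ge\lambda_0$, with $\lambda_0^{-1}\sim(1+K^{-1/2}\|u_0-\phi_0\|_{L^2(\Gamma)})^2$, gives a cubic power of $K^{-1/2}\|u_0-\phi_0\|_{L^2(\Gamma)}$. That is worse than \eqref{est3}.

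There is also a second defect you did not notice. Because the bulk reference point is $m(u_\lambda)$, the interface cross term leaves $\frac{|\Gamma|}{2K}|m(u_\lambda)-m_0'|^2$. By your own estimate this contains $\frac{C\lambda}{K}\bigl(1+K^{-1}\|u_0-\phi_0\|^2_{L^2(\Gamma)}\bigr)$, which the right-hand side of \eqref{est3} does not dominate. Take $u_0=\phi_0$ and $m_0'=m_0$, which is exactly the setting where the lemma is later used for $K\to0$. There the right-hand side of \eqref{est3} is just $M_3$, while your remainder is $C\lambda/K$. So your final claim that everything ``together gives \eqref{est3}'' is not justified.

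\paragraph{The missing idea.} It is the option you mention only as a fallback, and it is how the paper proceeds. Test \eqref{ch2lam} by $u_\lambda-m_0$ with the \emph{fixed} $m_0\in{\rm int}D(\beta)$ from (A6). Then Miranville--Zelik applies with constants depending only on $m_0$.

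The mean defect is then pushed into the $\mu_\lambda$-term, where no factor $1/K$ appears. By \eqref{volume}, the function $u_\lambda-m_0+\lambda(1*\mu_\lambda)$ has zero mean, so
\begin{equation*}
\int_\Omega\mu_\lambda(u_\lambda-m_0)\dx=\int_\Omega\bigl(\mu_\lambda-m(\mu_\lambda)\bigr)\bigl(u_\lambda-m_0+\lambda(1*\mu_\lambda)\bigr)\dx-\lambda\int_\Omega\mu_\lambda(1*\mu_\lambda)\dx .
\end{equation*}
This produces the extra terms $\lambda C_{\rm P}\|\nabla\mu_\lambda\|_{L^2(\Omega)}\int_0^t\|\mu_\lambda\|_{L^2(\Omega)}\dtau$ and $\lambda\|\mu_\lambda\|_{L^2(\Omega)}\int_0^t\|\mu_\lambda\|_{L^2(\Omega)}\dtau$. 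Using $\lambda^2\bigl(\int_0^t\|\mu_\lambda\|_{L^2(\Omega)}\dtau\bigr)^2\le T\lambda\cdot\lambda\|\mu_\lambda\|^2_{L^2(0,T;L^2(\Omega))}$ together with \eqref{est1}, their $L^2(0,T)$-norms are bounded by $C\bigl(1+K^{-1}\|u_0-\phi_0\|^2_{L^2(\Gamma)}\bigr)$.

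With $m_0$ fixed in the bulk, the interface terms combine on the left-hand side into
\begin{equation*}
\frac1K\|u_\lambda-\phi_\lambda\|^2_{L^2(\Gamma)}+\frac1K\int_\Gamma(\phi_\lambda-u_\lambda)(m_0-m_0')\dg .
\end{equation*}
Young's inequality then yields exactly $\frac{|\Gamma|}{2K}|m_0-m_0'|^2$, with no $\lambda/K$ remainder. After this change, the rest of your argument goes through as you describe: the boundary test by $\phi_\lambda-m_0'$, the pointwise bounds via \eqref{est1}, and taking the $L^2(0,T)$-norm.
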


\begin{proof} Multiplying \eqref{ch2lam} by $u_\lambda-m_0$, integrating it over $\Omega$, 
and using \eqref{rbclam}
we obtain
\begin{align}
	& \int_\Omega \beta_\lambda(u_\lambda) (u_\lambda-m_0) \dx 
	+ \int_\Omega |\nabla u_\lambda|^2 \dx
	\notag \\ 
	& = \frac{1}{K} \int_\Gamma (\phi_\lambda- u_\lambda)(u_\lambda-m_0) \dg 
	+ \int_\Omega \bigl( f- \pi(u_\lambda)-\lambda \partial _t u_\lambda \bigr)(u_\lambda-m_0) \dx 
	\notag \\
	& \quad {}+ \int_\Omega \mu_\lambda \bigl(
	u_\lambda-m_0 + \lambda (1*\mu_\lambda ) \bigr) \dx
	- \lambda \int_\Omega \mu_\lambda  (1*\mu_\lambda )  \dx,
	\label{add1}
\end{align}
where we employed the symbol 
\begin{equation*}
	(1 * \mu_\lambda) (t):= \int_0^t \mu_\lambda (\tau) \d \tau
	\quad {\rm for~} t \in [0,T]. 
\end{equation*}
Now, in view of the conservation property \eqref{volume} it holds that
\begin{align}
	& \int_\Omega \mu_\lambda \bigl(
	u_\lambda-m_0 + \lambda (1*\mu_\lambda ) \bigr) \dx
	\notag \\
	& = \int_\Omega\bigl( \mu_\lambda -m(\mu_\lambda) \bigr) 
	\bigl(
	u_\lambda-m_0 + \lambda (1*\mu_\lambda ) \bigr) \dx
	+ \int_\Omega m(\mu_\lambda) \bigl(
	u_\lambda-m_0 + \lambda (1*\mu_\lambda ) \bigr) \dx 
	\notag \\
	& = \int_\Omega \bigl( \mu_\lambda -m (\mu_\lambda) \bigr) (
	u_\lambda-m_0) \dx
	+ \lambda
	\int_\Omega \bigl( \mu_\lambda -m(\mu_\lambda) \bigr) \int_0^{(\cdot)}
	 \mu_\lambda (\tau) \d \tau  \dx
	\notag \\
	& \le  \bigl\| \mu_\lambda -m(\mu_\lambda) \bigr\|_{L^2(\Omega)} \| u_\lambda -m_0 \|_{L^2(\Omega)}  
	+ \lambda
	 \bigl\| \mu_\lambda -m(\mu_\lambda) \bigr\|_{L^2(\Omega)} \int_0^{(\cdot)}
	\bigl\| \mu_\lambda (\tau) \bigr\|_{L^2(\Omega)} \d \tau \notag \\
	& 
	\le C_{\rm P} \| \nabla \mu_\lambda \|_{L^2(\Omega)} \| u_\lambda -m_0 \|_{L^2(\Omega)}  
		+ \lambda C_{\rm P} \| \nabla \mu_\lambda \|_{L^2(\Omega)} 
	\int_0^{(\cdot)}
	\bigl\| \mu_\lambda (\tau) \bigr\|_{L^2(\Omega)} \d \tau,
	\label{add2}
\end{align}
a.e.\ in $(0,T)$, where we 
applied the Poincar\'e inequality in terms of the positive constant $C_{\rm P}$.
Similarly, a.e.\ in $(0,T)$ we have that
\begin{align}
	&-\lambda \int_\Omega \mu_\lambda  (1*\mu_\lambda )  \dx 
	 = -\int_0^{(\cdot)}\!\!\int_\Omega{} \mu _\lambda \mu_\lambda (\tau) \dx \d \tau \notag \\
	& \le \int_0^{(\cdot)} \| \mu_\lambda \|_{L^2(\Omega)} 
	\bigl\| \mu_\lambda (\tau) \bigr\|_{L^2(\Omega)} \d \tau 
	 \le \| \mu_\lambda \|_{L^2(\Omega)} \int_0^{(\cdot)} \bigl\| \mu_\lambda (\tau) \bigr\|_{L^2(\Omega)} \d \tau. 
	\label{add3}
\end{align}
Here, we substitute \eqref{add2} and \eqref{add3} into \eqref{add1}, and 
apply the Miranville--Zelik trick \cite[Appendix, Prop.~A.1]{MZ04} (see also \cite{GMS09} for a complete proof) to find some constants $\delta_1>0$, $C_1>0$ satisfying 
\begin{align*}
	& \delta_1 \int_\Omega \bigl| \beta _\lambda (u_\lambda) \bigr| \dx -C_1|\Omega| \le \int_\Omega \beta_\lambda(u_\lambda) (u_\lambda-m_0) \dx \\
	& \le \int_\Omega \beta_\lambda(u_\lambda) (u_\lambda-m_0) \dx + \int_\Omega |\nabla u_\lambda|^2 \dx\\
	& \le \frac{1}{K} \int_\Gamma (\phi_\lambda- u_\lambda)(u_\lambda-m_0) \dg 
	+ \bigl\| f- \pi(u_\lambda)-\lambda \partial _t u_\lambda \bigr\|_{L^2(\Omega)} 
	\| u_\lambda-m_0 \|_{L^2(\Omega)}
	\notag \\
	& \quad  {} 
	+ C_{\rm P} \| \nabla \mu_\lambda \|_{L^2(\Omega)} \| u_\lambda -m_0 \|_{L^2(\Omega)}  
		+ \lambda C_{\rm P} \| \nabla \mu_\lambda \|_{L^2(\Omega)} 
	\int_0^{(\cdot)}
	\bigl\| \mu_\lambda (\tau) \bigr\|_{L^2(\Omega)} \d \tau \notag \\
	& \quad {}
	+ \lambda \| \mu_\lambda \|_{L^2(\Omega)} \int_0^{(\cdot)} \bigl\| \mu_\lambda (\tau) \bigr\|_{L^2(\Omega)} \d \tau
\end{align*}
a.e.\ on $(0,T)$. At the same time, picking some $m_0' \in  {\rm int} D(\beta_\Gamma)$ we multiply \eqref{dbclam} by 
$\phi_\lambda-m_0'$ in order to deduce that 
\begin{align*}
	& \delta_1 \int_\Gamma \bigl| \beta _{\Gamma,\lambda} (\phi_\lambda) \bigr| \dg -C_1|\Gamma| \\
	& \le \int_\Gamma \beta_{\Gamma,\lambda}(\phi_\lambda) (\phi_\lambda-m_0') \dg + 
	\int_\Gamma |\nabla_\Gamma \phi_\lambda |^2 \dg \\
	& \le - \frac{1}{K} \int_\Gamma (\phi_\lambda- u_\lambda)(\phi_\lambda-m_0') \dg 
	+ \bigl\| f_\Gamma -\pi_\Gamma (\phi_\lambda ) -\partial _t \phi_\lambda \bigr\|_{L^2(\Gamma)}
	\| \phi_\lambda - m_0' \|_{L^2(\Gamma)}
\end{align*}
a.e.\ on $(0,T)$. We sum the last inequalities to deduce that
\begin{align*}
	& \delta_1 \int_\Omega \bigl| \beta _\lambda \bigl(u_\lambda(t) \bigr) \bigr| \dx
	+ \delta_1 \int_\Gamma \bigl| \beta _{\Gamma,\lambda} \bigl(\phi_\lambda(t) \bigr) \bigr| \dg 
	+ \frac{1}{K} \int_\Gamma\bigl | u_\lambda(t)-\phi_\lambda(t) \bigr|^2 \dg \\
	 & \le C_1\bigl( |\Omega| + |\Gamma| \bigr)
	 + \frac{1}{K} \int_\Gamma \bigl(u_\lambda(t)-\phi_\lambda(t) \bigr) (m_0-m_0') \dg \notag \\
	& \quad {}  + \bigl\| f(t)- \pi\bigl( u_\lambda (t) \bigr)-\lambda \partial _t u_\lambda(t) \bigr\|_{L^2(\Omega)} 
	\bigl\| u_\lambda(t) -m_0 \bigr\|_{L^2(\Omega)} \notag \\
	&\quad {} + \bigl\| f_\Gamma  (t)-\pi_\Gamma \bigl( {\phi_\lambda} (t) \bigr) -\partial _t \phi_\lambda  (t)
	\bigr\|_{L^2(\Gamma)} \bigl\| {\phi_\lambda} (t) - m_0' \bigr\|_{L^2(\Gamma)}
	\notag \\
	& \quad {} + C_{\rm P} \bigl\| \nabla \mu_\lambda (t) \bigr\|_{L^2(\Omega)} 
	\bigl\| u_\lambda(t) -m_0 \bigr\|_{L^2(\Omega)} 
	+ \lambda C_{\rm P} \bigl\| \nabla \mu_\lambda(t) \bigr\|_{L^2(\Omega)} 
	 \int_0^t \bigl\| \mu_\lambda (\tau) \bigr\|_{L^2(\Omega)} \d \tau
	 \notag \\
	 & \quad {}
	 + \lambda \bigl\| \mu_\lambda(t) \bigr\|_{L^2(\Omega)} 
	 \int_0^t \bigl\| \mu_\lambda (\tau) \bigr\|_{L^2(\Omega)} \d \tau\\
	 & \le C_1\bigl( |\Omega| + |\Gamma| \bigr)
	 + \frac{1}{2K} \int_\Gamma \bigl| u_\lambda(t)-\phi_\lambda(t) \bigr|^2 \dg 
	 +  \frac{1}{2K}|m_0-m_0'|^2 |\Gamma| \notag \\
	& \quad {}  
	+ \bigl\| f(t)- \pi\bigl( u_\lambda (t) \bigr)-\lambda \partial _t u_\lambda(t) \bigr\|_{L^2(\Omega)} 
	\| u_\lambda-m_0 \|_{L^\infty(0,T;L^2(\Omega))} \notag \\
	& \quad {} + \bigl\| f_\Gamma  (t)-\pi_\Gamma \bigl( {\phi_\lambda} (t) \bigr) -\partial _t \phi_\lambda  (t)
	\bigr\|_{L^2(\Gamma)} \| {\phi_\lambda} - m_0' \|_{L^\infty(0,T;L^2(\Gamma))}
	\notag \\
	& \quad {} + C_{\rm P} 
	\bigl\| \nabla \mu_\lambda (t) \bigr\|_{L^2(\Omega)} \biggl(
	\| u_\lambda-m_0 \|_{L^\infty(0,T;L^2(\Omega))} 
	+ \lambda \int_0^t \bigl\| \mu_\lambda (\tau) \bigr\|_{L^2(\Omega)} \d \tau \biggr)
	 \notag \\
	 & \quad {}
	 + \lambda \bigl\| \mu_\lambda(t) \bigr\|_{L^2(\Omega)} 
	 \int_0^t \bigl\| \mu_\lambda (\tau) \bigr\|_{L^2(\Omega)} \d \tau
\end{align*}
for a.a.\ $t \in (0,T)$. 
Thus, the estimate \eqref{est1} implies that there exists a positive constant 
$M_3'$ independent of $\lambda \in (0,1]$ and 
$K>0$ such that
\begin{equation*}
\begin{aligned}
&	\int_0^T \bigl\| \beta _\lambda (u_\lambda ) \bigr\|_{L^1(\Omega)}^2 \dt  + \int_0^T \bigl\| \beta _{\Gamma,\lambda} (\phi_\lambda ) \bigr\|_{L^1(\Gamma)}^2 \dt \\
& 
	 \le M_3' \left( 1 
	 +  \frac{1}{K^2}|m_0-m_0'|^4 
	+ \left(1+\frac{1}{K}\|u_0 -\phi_0\|_{L^2(\Gamma)}^2 \right)^{\!\! 2} \right).
    \end{aligned}
\end{equation*}
Indeed, please note that we could square both sides, integrate over $[0,T]$ and use \eqref{est1} to deal with the last terms above. For example, let us detail the treatment of
\begin{align*}
	& \int_0^T 
	\bigl\| f(t)- \pi\bigl( u_\lambda (t) \bigr)-\lambda \partial _t u_\lambda(t) \bigr\|_{L^2(\Omega)}^2
	\| u_\lambda-m_0 \|_{L^\infty(0,T;L^2(\Omega))}^2 \dt \notag \\
	& \le 
	\left\{ 
	3 \int _0^T \bigl\|f(t) \bigr\|^2_{L^2(\Omega)} \dt
	+3 \int _0^T \bigl\|\pi \bigl( u_\lambda(t) \bigr) \bigr\|^2_{L^2(\Omega)} \dt
	+ 3 \lambda^2 \int _0^T \bigl\|\partial _t u_\lambda(t) \bigr\|^2_{L^2(\Omega)} \dt
	\right\}
	\notag \\
	& \quad {} \times 
	\left\{ 2M_1^2 
	\left(1+\frac{1}{\sqrt{K}}\|u_0 -\phi_0\|_{L^2(\Gamma)} \right)^{\!\! 2}+ 2 m_0^2|\Omega|
	\right\}  \notag \\
	& \le M_3' \left(1+\frac{1}{K}\|u_0 -\phi_0\|_{L^2(\Gamma)}^2 \right)^{\!\! 2}.
\end{align*}
Also, we find that
\begin{align*}
    & \int_0^T
    \biggl( C_{\rm P}^2 \bigl\| \nabla \mu_\lambda (t) \bigr\|_{L^2(\Omega)}^2 + \bigl\| \mu_\lambda(t) \bigr\|_{L^2(\Omega)}^2 \biggr)  \lambda^2 \biggr ( \int_0^t \bigl\| \mu_\lambda (\tau) \bigr\|_{L^2(\Omega)} \d \tau \biggr)^2 \dt  \\
    & \le \int_0^T
    \biggl( \lambda C_{\rm P}^2 \bigl\| \nabla \mu_\lambda (t) \bigr\|_{L^2(\Omega)}^2  + \lambda \bigl\| \mu_\lambda(t) \bigr\|_{L^2(\Omega)}^2 \biggr) T \biggr ( \int_0^T \lambda \bigl\| \mu_\lambda (\tau) \bigr\|_{L^2(\Omega)}^2 \d \tau \biggr) \dt \\
    & \le M_3' \left(1+\frac{1}{K}\|u_0 -\phi_0\|_{L^2(\Gamma)}^2 \right)^{\!\! 2}.
\end{align*}
Namely, we can conclude that \eqref{est3} holds. 
\end{proof}

\begin{lemma}\label{L3} 
There exist positive constants $M_4$ and $M_5$, independent of $\lambda \in (0,1]$ and 
$K>0$, such that	
\begin{gather}
	\bigl\| m (\mu_\lambda) \bigr\|_{L^2(0,T)}
	\le M_4 \left(  1 + \frac{1}{K} |m_0-m_0'|^2 + \frac{1}{K} \|u_0 - \phi_0 \|_{L^2(\Gamma)}^2 \right),
	\label{est4}
	\\
	\| \mu_\lambda \|_{L^2(0,T;H^1(\Omega))} 
	\le M_5\left(  1 + \frac{1}{K} |m_0-m_0'|^2 +  \frac{1}{K} \|u_0 - \phi_0 \|_{L^2(\Gamma)}^2  \right).
	\label{est5}
\end{gather}
\end{lemma}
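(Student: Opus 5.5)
The plan is to bound $m(\mu_\lambda)$ by integrating \eqref{ch2lam} over $\Omega$. The boundary flux term that appears will be traded for quantities on $\Gamma$ using \eqref{dbclam}. Then \eqref{est5} follows from the Poincar\'e--Wirtinger inequality combined with the bound on $\nabla\mu_\lambda$ from Lemma~\ref{L1}.

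First, integrating \eqref{ch2lam} over $\Omega$ and using \eqref{rbclam} in the form $\int_\Omega \Delta u_\lambda \dx = \int_\Gamma \partial_{\boldsymbol{\nu}} u_\lambda \dg = \frac1K \int_\Gamma (\phi_\lambda-u_\lambda)\dg$, we get
\begin{equation*}
|\Omega|\, m(\mu_\lambda) = \lambda \int_\Omega \partial_t u_\lambda \dx + \int_\Omega \beta_\lambda(u_\lambda)\dx + \int_\Omega \pi(u_\lambda)\dx - \int_\Omega f \dx - \frac1K \int_\Gamma (\phi_\lambda-u_\lambda)\dg .
\end{equation*}
The delicate point is the last term. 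Estimating it directly by the bound on $K^{-1/2}\|u_\lambda-\phi_\lambda\|_{L^2(\Gamma)}$ from \eqref{est1} would leave a factor $K^{-1/2}$, which blows up as $K\to 0$. So I would not estimate it directly. Instead, I integrate \eqref{dbclam} over $\Gamma$; since $\int_\Gamma \Delta_\Gamma\phi_\lambda \dg =0$ on the closed manifold $\Gamma$, this gives
\begin{equation*}
\frac1K \int_\Gamma (\phi_\lambda-u_\lambda)\dg = \int_\Gamma \bigl( f_\Gamma - \partial_t\phi_\lambda - \beta_{\Gamma,\lambda}(\phi_\lambda) - \pi_\Gamma(\phi_\lambda)\bigr)\dg .
\end{equation*}
This is exactly the structure behind \eqref{remark}. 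Substituting, $m(\mu_\lambda)$ is expressed entirely through terms that are controlled uniformly in $K$.

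Next, each term is bounded in $L^2(0,T)$. The term $\lambda\|\partial_t u_\lambda\|_{L^1(\Omega)}$ is at most $|\Omega|^{1/2}\sqrt\lambda\,\bigl(\sqrt\lambda\|\partial_t u_\lambda\|_{L^2(\Omega)}\bigr)$, controlled by \eqref{est1} since $\lambda\le1$. The terms $\|\beta_\lambda(u_\lambda)\|_{L^1(\Omega)}$ and $\|\beta_{\Gamma,\lambda}(\phi_\lambda)\|_{L^1(\Gamma)}$ are controlled in $L^2(0,T)$ by \eqref{est3}. The terms involving $\pi,\pi_\Gamma$ are handled by the Lipschitz bound (A2) together with the $L^\infty(0,T;H^1)$ bounds in \eqref{est1}. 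The data $f, f_\Gamma$ are handled by (A3), and $\partial_t\phi_\lambda$ by the $H^1(0,T;L^2(\Gamma))$ bound in \eqref{est1}. The right-hand side of \eqref{est1} is of the form $1+K^{-1/2}\|u_0-\phi_0\|_{L^2(\Gamma)}$, and this is at most $2\bigl(1+K^{-1}\|u_0-\phi_0\|_{L^2(\Gamma)}^2\bigr)$. Hence all contributions are dominated by the right-hand side of \eqref{est4}, which yields \eqref{est4}.

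Finally, for \eqref{est5} I use the Poincar\'e--Wirtinger inequality $\|\mu_\lambda\|_{L^2(\Omega)} \le C_{\rm P}\|\nabla\mu_\lambda\|_{L^2(\Omega)} + |\Omega|^{1/2}|m(\mu_\lambda)|$. Squaring, integrating in time, and invoking \eqref{est1} for $\nabla\mu_\lambda$ and \eqref{est4} for $m(\mu_\lambda)$, with the same absorption of the $K^{-1/2}$ factor as above, gives \eqref{est5}. The main obstacle is the flux term $\frac1K\int_\Gamma(\phi_\lambda-u_\lambda)\dg$, and the key idea is the substitution via the integrated boundary equation \eqref{dbclam}.
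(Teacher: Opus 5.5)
Your proposal is correct and follows the paper's proof. You integrate \eqref{ch2lam} over $\Omega$, then replace the flux $\frac1K\int_\Gamma(\phi_\lambda-u_\lambda)\dg$ using the integrated boundary equation \eqref{dbclam}, so that no negative power of $K$ remains. The resulting terms are bounded by \eqref{est1} and \eqref{est3}, and \eqref{est5} follows from Poincar\'e--Wirtinger; your explicit check that $1+K^{-1/2}\|u_0-\phi_0\|_{L^2(\Gamma)}\le 2\bigl(1+K^{-1}\|u_0-\phi_0\|_{L^2(\Gamma)}^2\bigr)$ only makes precise a step the paper leaves implicit.
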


\begin{proof}
By integrating \eqref{ch2lam} over $\Omega$ and by comparison of terms from \eqref{rbclam} and \eqref{dbclam}, we have that
\begin{align*}
	\left| \int_\Omega \mu_\lambda \dx \right|
	& \le \int_\Omega \bigl| 
	\lambda \partial _t u_\lambda + \beta_\lambda(u_\lambda) +\pi(u_\lambda) - f \bigr| \dx \notag \\
	&\quad {} +\int_\Gamma \bigl|\partial_t \phi_\lambda + \beta_{\Gamma,\lambda}(\phi_\lambda) +\pi_\Gamma (\phi_\lambda) -f_\Gamma \bigr| \dg \\
	& \le \lambda \| 
	\partial_t u_\lambda \|_{L^2(\Omega)}|\Omega|^{1/2}
	+\int_\Omega \bigl| 
	\beta_\lambda(u_\lambda)\bigr| \dx
	+  \int_\Omega \bigl| \pi(u_\lambda) \bigr| \dx
	+ \| f \|_{L^2(\Omega)}|\Omega|^{1/2} \\
	& \quad {} 
	+ 
	\|\partial_t \phi_\lambda \|_{L^2(\Gamma)} |\Gamma|^{1/2}
	+ \int_\Gamma \bigl|\beta_{\Gamma,\lambda}(\phi_\lambda)\bigr| \dg
	+ \int_\Gamma \bigl|\pi_\Gamma (\phi_\lambda) \big| \dg
	+ \| f_\Gamma \|_{L^2(\Gamma)}|\Gamma|^{1/2}.
\end{align*}
Then, \eqref{est1} and \eqref{est3} imply
\eqref{est4}. 
Next, we can apply the Poincar\'e--Wirtinger inequality 
\begin{equation}
\label{pier3}
	\| \mu_\lambda \|_{H^1(\Omega)} \le 
	C_{{\rm PW}} \bigl( \| \nabla \mu_\lambda \|_{L^2(\Omega)} + \bigl|m(\mu_\lambda) \bigr| \bigr)
\end{equation}
a.e.\ in $(0,T)$ to deduce \eqref{est5}.  
\end{proof}
\smallskip

Hereafter, we use the notation $C(K,m_0')$ to denote a positive constant of the form
\begin{equation*}
	C(K,m_0') := M \left(  1 + \frac{1}{K} |m_0-m_0'|^2 + \frac{1}{K} \|u_0 - \phi_0 \|_{L^2(\Gamma)}^2 \right)
\end{equation*}
for $K>0$ and $m_0' \in {\rm int} D(\beta_\Gamma)$. 
In this definition, the generic constant $M>0$ stands for a sufficiently large constant independent of 
$\lambda \in (0,1]$ and $K>0$. While $M$ is always independent of $\lambda$ and $K$, 
its specific value may vary depending on the context, that is, from line to line. 
We also use the notation $C(K)$ for a positive constant as   
\begin{equation*}
	C(K) := M \left( 1+ \frac{1}{\sqrt{K}} \right) 
	\left(  1 + \frac{1}{\sqrt{K}}\|u_0 - \phi_0 \|_{L^2(\Gamma)} \right)
\end{equation*}
with constant $M>0$ large enough. Note that $C(K)$ still depends on $K$ even though we 
assume $u_0=\phi_0$ a.e.\ on $\Gamma$. Therefore, concerning asymptotic results, the next 
two lemmas will be useful only for the 
asymptotics as $K \to + \infty$. Of course, for the case when $K \to 0$, the following estimates will not be 
of help, however in that situation we will make additional assumptions, namely {\rm (A7)} and {\rm (A8)}, to obtain global bounds. 

\begin{lemma}\label{L4} 
For each $\lambda \in (0,1]$ and $K>0$ there holds
\begin{gather}
	\bigl\| \beta_{\Gamma,\lambda} ( \phi_\lambda ) \bigr\|_{L^2(0,T;L^2(\Gamma))}
	\le C(K),
	\label{est6} \\
	\| \Delta_\Gamma \phi_\lambda \|_{L^2(0,T;L^2(\Gamma))} 
	\le C(K),
	\label{est7} \\
	\| \phi_\lambda \|_{L^2(0,T;H^2(\Gamma))} 
	\le C(K).
	\label{est8}
\end{gather}
The last estimate \eqref{est8} implies the uniform boundedness of $\{ \phi_\lambda \}$ in $L^2(0,T;L^\infty(\Gamma))$.  
\end{lemma}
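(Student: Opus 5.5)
We test the boundary equation \eqref{dbclam} with $\beta_{\Gamma,\lambda}(\phi_\lambda)$ and integrate over $\Gamma$. Since $\beta_{\Gamma,\lambda}$ is monotone and Lipschitz, we have $\phi_\lambda(t)\in H^2(\Gamma)$ and $\beta_{\Gamma,\lambda}(\phi_\lambda)\in H^1(\Gamma)$. Integration by parts then gives
\begin{equation*}
	-\int_\Gamma \Delta_\Gamma \phi_\lambda \, \beta_{\Gamma,\lambda}(\phi_\lambda) \dg
	= \int_\Gamma \beta_{\Gamma,\lambda}'(\phi_\lambda) |\nabla_\Gamma \phi_\lambda|^2 \dg \ge 0,
\end{equation*}
so this term can be discarded. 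Hence, a.e.\ in $(0,T)$,
\begin{equation*}
	\bigl\| \beta_{\Gamma,\lambda}(\phi_\lambda) \bigr\|_{L^2(\Gamma)}^2
	\le \Bigl( \| \partial_t \phi_\lambda \|_{L^2(\Gamma)} + \| \pi_\Gamma(\phi_\lambda)\|_{L^2(\Gamma)} + \|f_\Gamma\|_{L^2(\Gamma)} + \frac{1}{K}\| \phi_\lambda - u_\lambda \|_{L^2(\Gamma)} \Bigr) \bigl\| \beta_{\Gamma,\lambda}(\phi_\lambda) \bigr\|_{L^2(\Gamma)}.
\end{equation*}
Dividing by $\| \beta_{\Gamma,\lambda}(\phi_\lambda) \|_{L^2(\Gamma)}$ (or using Young's inequality), squaring and integrating in time bounds $\| \beta_{\Gamma,\lambda}(\phi_\lambda)\|_{L^2(0,T;L^2(\Gamma))}$ by the $L^2(0,T;L^2(\Gamma))$ norms of these four terms. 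The first three are controlled by \eqref{est1}, (A2) and (A3), with a bound of the form $M(1+K^{-1/2}\|u_0-\phi_0\|_{L^2(\Gamma)})$.

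The coupling term is the only delicate point, and it determines the shape of $C(K)$. We write
\begin{equation*}
	\frac{1}{K}\| \phi_\lambda - u_\lambda\|_{L^2(0,T;L^2(\Gamma))}
	\le \frac{\sqrt{T}}{\sqrt{K}} \cdot \frac{1}{\sqrt{K}} \| u_\lambda - \phi_\lambda \|_{L^\infty(0,T;L^2(\Gamma))}.
\end{equation*}
By \eqref{est1} the right-hand side is at most $\sqrt{T}K^{-1/2}M_1(1+K^{-1/2}\|u_0-\phi_0\|_{L^2(\Gamma)})$. Adding the contributions produces exactly the factor $(1+K^{-1/2})(1+K^{-1/2}\|u_0-\phi_0\|_{L^2(\Gamma)})$, which proves \eqref{est6}.

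For \eqref{est7} we proceed by comparison in \eqref{dbclam}:
\begin{equation*}
	\Delta_\Gamma \phi_\lambda = \partial_t \phi_\lambda + \beta_{\Gamma,\lambda}(\phi_\lambda) + \pi_\Gamma(\phi_\lambda) + \frac{1}{K}(\phi_\lambda - u_\lambda) - f_\Gamma .
\end{equation*}
Every term on the right has already been bounded in $L^2(0,T;L^2(\Gamma))$ by $C(K)$. For \eqref{est8} we use elliptic regularity for the Laplace--Beltrami operator on the smooth closed manifold $\Gamma$, namely $\|\phi\|_{H^2(\Gamma)} \le C(\|\Delta_\Gamma \phi\|_{L^2(\Gamma)} + \|\phi\|_{L^2(\Gamma)})$. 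Combined with \eqref{est7} and the $L^\infty(0,T;H^1(\Gamma))$ bound in \eqref{est1}, this yields \eqref{est8}.

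Finally, $\Gamma$ has dimension $d-1\le 2$, so the Sobolev embedding $H^2(\Gamma)\hookrightarrow L^\infty(\Gamma)$ holds and gives the stated $L^2(0,T;L^\infty(\Gamma))$ bound. We expect no real obstacle here. The only point requiring care is the justification of the integration by parts with $\beta_{\Gamma,\lambda}(\phi_\lambda)$ as test function. This works because $\beta_{\Gamma,\lambda}$ is Lipschitz, so the chain rule in $H^1(\Gamma)$ applies with $\beta_{\Gamma,\lambda}'\ge 0$ a.e.
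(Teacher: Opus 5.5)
Your proposal is correct and follows essentially the same route as the paper. You test \eqref{dbclam} with $\beta_{\Gamma,\lambda}(\phi_\lambda)$, discard the nonnegative term $\int_\Gamma \beta_{\Gamma,\lambda}'(\phi_\lambda)|\nabla_\Gamma\phi_\lambda|^2 \dg$, control the coupling term through the bound on $K^{-1/2}\|u_\lambda-\phi_\lambda\|_{L^\infty(0,T;L^2(\Gamma))}$ in \eqref{est1}, and then obtain \eqref{est7}--\eqref{est8} by comparison in \eqref{dbclam} and elliptic regularity on $\Gamma$. Your explicit tracking of the $\sqrt{T}$ factor in the coupling term is slightly more careful than the paper's write-up.
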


\begin{proof}
Multiplying \eqref{dbclam} by $\beta_{\Gamma,\lambda}(\phi_\lambda)$ and integrating it over $\Gamma$, 
by the Young inequality  we deduce that
\begin{align*}
	& \int_\Gamma \beta_{\Gamma,\lambda}' (\phi_\lambda) |\nabla_\Gamma \phi_\lambda|^2 \dg 
	+ \bigl\| \beta_{\Gamma,\lambda}(\phi_\lambda) \bigr\|_{L^2(\Gamma)}^2 \notag \\
	& \le \bigl\| f_\Gamma - \pi_\Gamma (\phi_\lambda)-\partial _t \phi_\lambda \bigr\|_{L^2(\Gamma)}^2
	+	
	\frac{1}{K^2} \| u_\lambda-\phi_\lambda \|_{L^2(\Gamma)}^2
	+
	\frac{1}{2} \bigl\| \beta_{\Gamma,\lambda}(\phi_\lambda) \bigr\|_{L^2(\Gamma)}^2,
\end{align*}
which leads to
\begin{align*}
	& \bigl\| \beta_{\Gamma,\lambda}(\phi_\lambda) \bigr\|_{L^2(0,T;L^2(\Gamma))} \\
	& \le M_6
	\left(1
	+ \frac{1}{\sqrt{K}} \| u_0-\phi_0 \|_{L^2(\Gamma)} \right)
	+ \frac{1}{\sqrt{K}}\, M_1
	\left(1
	+ \frac{1}{\sqrt{K}} \| u_0-\phi_0 \|_{L^2(\Gamma)} \right)
\end{align*}
for some positive constant $M_6$, 
from the monotonicity of $\beta_{\Gamma,\lambda}$. This implies \eqref{est6}. 
\smallskip 

Next, by comparing terms in the equation \eqref{dbclam}, we see that there exists a constant $M_7>0$ such that  
\begin{align*}
	& \| \Delta_\Gamma \phi_\lambda \|_{L^2(0,T;L^2(\Gamma))} \notag \\
	& \le \| \partial _t \phi_\lambda \|_{L^2(0,T;L^2(\Gamma))} 
	+ \bigl\| \beta_{\Gamma,\lambda}(\phi_\lambda) \bigr\|_{L^2(0,T;L^2(\Gamma))} 
	+ \bigl\| \pi_\Gamma (\phi_\lambda) \bigr\|_{L^2(0,T;L^2(\Gamma))} 
	\notag \\
	& \quad {} + \frac{1}{K} 
	\| \phi_\lambda - u_\lambda \|_{L^2(0,T;L^2(\Gamma))} 
	+\| f_\Gamma \|_{L^2(0,T;L^2(\Gamma))} \notag \\
	& \le M_1
	\left(1
	+ \frac{1}{\sqrt{K}} \| u_0-\phi_0 \|_{L^2(\Gamma)} \right) 
	+ \bigl\| \beta_{\Gamma,\lambda}(\phi_\lambda) \bigr\|_{L^2(0,T;L^2(\Gamma))} 
	\notag \\
	& \quad {}+ M_7 \left(1
	+ \frac{1}{\sqrt{K}} \| u_0-\phi_0 \|_{L^2(\Gamma)} \right) 
	\notag \\
	& \quad {}
	+ \frac{1}{\sqrt{K}}\, M_1
	\left(1
	+ \frac{1}{\sqrt{K}} \| u_0-\phi_0 \|_{L^2(\Gamma)} \right) 
	+ \| f_\Gamma \|_{L^2(0,T;L^2(\Gamma))} \notag \\
	& \le C(K).
\end{align*}
This yields the estimate \eqref{est7}, and by elliptic regularity also \eqref{est8} follows.
\end{proof}

\begin{lemma}\label{L5} 
For each $\lambda \in (0,1]$ and $K>0$ there holds
\begin{gather}
	\bigl\| \beta_{\lambda}( u_\lambda) \bigr\|_{L^2(0,T;L^2(\Omega))}
	\le C(K) + C(K,m_0'),
	\label{est9} \\
	\| \Delta u_\lambda \|_{L^2(0,T;L^2(\Omega))} 
	\le C(K) + C(K,m_0'),
	\label{est10} \\
	\| \partial _{\boldsymbol{\nu}} u_\lambda \|_{L^2(0,T;H^{1/2}(\Gamma))} 
	\le \frac{1}{\sqrt K}C(K),
	\label{est11} \\
	\| u_\lambda \|_{L^2(0,T;H^2(\Omega))} 
	\le C(K,m_0') + \left( 1+ \frac{1}{\sqrt K} \right) C(K).
	\label{est12}
\end{gather}  
\end{lemma}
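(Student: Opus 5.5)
We prove the four estimates in the order \eqref{est11}, \eqref{est9}, \eqref{est10}, \eqref{est12}. The plan mirrors the proof of Lemma~\ref{L4}, now applied to the bulk equation \eqref{ch2lam}.

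We begin with the boundary term, since it only needs Lemmas~\ref{L1} and~\ref{L4}. By \eqref{rbclam}, $\partial_{\boldsymbol{\nu}} u_\lambda = K^{-1}(\phi_\lambda - u_\lambda)$, so
\begin{equation*}
\| \partial_{\boldsymbol{\nu}} u_\lambda \|_{H^{1/2}(\Gamma)} \le \frac{1}{K}\bigl( \|\phi_\lambda\|_{H^{1/2}(\Gamma)} + \|u_\lambda\|_{H^{1/2}(\Gamma)} \bigr).
\end{equation*}
The trace theorem gives $\|u_\lambda\|_{H^{1/2}(\Gamma)}\le C\|u_\lambda\|_{H^1(\Omega)}$, and $\|\phi_\lambda\|_{H^{1/2}(\Gamma)}\le C\|\phi_\lambda\|_{H^1(\Gamma)}$. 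Both are bounded in $L^\infty(0,T)$ by \eqref{est1}. This yields a bound of the form $K^{-1}M(1+K^{-1/2}\|u_0-\phi_0\|_{L^2(\Gamma)})$. If $K\ge 1$ (or more generally), we use $1/K \le K^{-1/2}(1+K^{-1/2})$ to write it as $K^{-1/2}C(K)$, which is \eqref{est11}.

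For \eqref{est9}, we test \eqref{ch2lam} by $\beta_\lambda(u_\lambda)$ and integrate over $\Omega$. Green's formula gives
\begin{equation*}
\int_\Omega -\Delta u_\lambda\, \beta_\lambda(u_\lambda)\dx = \int_\Omega \beta_\lambda'(u_\lambda)|\nabla u_\lambda|^2\dx - \int_\Gamma \partial_{\boldsymbol{\nu}}u_\lambda\, \beta_\lambda(u_\lambda)\dg .
\end{equation*}
The volume term is nonnegative. The boundary term, after \eqref{rbclam}, becomes $\frac1K\int_\Gamma (u_\lambda-\phi_\lambda)\beta_\lambda(u_\lambda)\dg$.

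This boundary term is the main obstacle, because $\beta_\lambda(u_\lambda)$ on $\Gamma$ is not controlled directly. We first write $u_\lambda-\phi_\lambda$ as $u_\lambda-\phi_\lambda$ paired with $\beta_\lambda(u_\lambda)-\beta_\lambda(\phi_\lambda)$. By monotonicity of $\beta_\lambda$ this pairing has a good sign, so it can be dropped. The remainder is $\frac1K\int_\Gamma(u_\lambda-\phi_\lambda)\beta_\lambda(\phi_\lambda)\dg$, which requires $\beta_\lambda(\phi_\lambda)$ on $\Gamma$. Here (A5) should be used; the standard device in \cite{CF15b} is the comparison $|\beta_\lambda(r)|\le c(1+|\beta_{\Gamma,\lambda}(r)|)$, which may require a slightly stronger pointwise form of \eqref{gc} or a further argument. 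If that works, Young's inequality bounds this term by $\frac{1}{K^2}\|u_\lambda-\phi_\lambda\|^2_{L^2(\Gamma)} + C(1+\|\beta_{\Gamma,\lambda}(\phi_\lambda)\|^2_{L^2(\Gamma)})$, controlled by \eqref{est1} and \eqref{est6}.

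If that route fails, the fallback avoids the issue: choose the sign so that the boundary term can be discarded, or estimate it by $\|\partial_{\boldsymbol{\nu}}u_\lambda\|$ in $H^{1/2}(\Gamma)$ against $\beta_\lambda(u_\lambda)\in H^1(\Omega)$. The latter gives a term $\|\beta_\lambda'(u_\lambda)\nabla u_\lambda\|$, which is problematic because $\beta_\lambda'\le 1/\lambda$ is not uniform in $\lambda$. The remaining right-hand side terms are $\mu_\lambda+f-\pi(u_\lambda)-\lambda\partial_tu_\lambda$. These are absorbed via Young's inequality and \eqref{est1}, \eqref{est5}, and \eqref{est5} is the source of the $C(K,m_0')$ contribution.

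Once \eqref{est9} is in hand, \eqref{est10} follows by comparison in \eqref{ch2lam}. We use $\lambda\|\partial_tu_\lambda\|\le\sqrt\lambda\|\partial_t u_\lambda\|$, together with \eqref{est1}, \eqref{est5}, \eqref{est9} and (A3). Finally, the elliptic estimate \eqref{eli} combined with \eqref{est10}, \eqref{est11} and \eqref{est1} gives \eqref{est12}, with the $(1+K^{-1/2})C(K)$ structure.
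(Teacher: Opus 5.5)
Your treatment of \eqref{est11}, and the passage from \eqref{est9} to \eqref{est10} and \eqref{est12} by comparison in \eqref{ch2lam} and the elliptic estimate \eqref{eli}, agree with the paper. There is, however, a genuine gap in \eqref{est9}, at the boundary term. After testing by $\beta_\lambda(u_\lambda)$, the quantity to bound from above is $\frac{1}{K}\int_\Gamma(\phi_\lambda-u_\lambda)\beta_\lambda(u_\lambda)\dg$. Splitting off the monotone part replaces it by the \emph{larger} quantity $\frac{1}{K}\int_\Gamma(\phi_\lambda-u_\lambda)\beta_\lambda(\phi_\lambda)\dg$. For this term convexity only gives a lower bound, so you are forced to seek a pointwise control of $|\beta_\lambda|$ by $|\beta_{\Gamma,\lambda}|$.

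That control is essentially assumption (A8), which the paper introduces only later, for $K\to 0$. It does not follow from \eqref{gc}, which constrains only the potentials. For example, take $\hat{\beta}(r)=2-\sqrt{1-r}-\sqrt{1+r}$ and $\hat{\beta}_\Gamma(r)=(1+r)\ln(1+r)+(1-r)\ln(1-r)$ on $[-1,1]$, and $+\infty$ elsewhere. These satisfy (A1) and (A5). Yet as $r\to 1^-$ one has $\beta(r)\sim\frac12(1-r)^{-1/2}$, while $\beta_\Gamma(r)$ blows up only logarithmically. Hence no bound $|\beta_\lambda(r)|\le c(1+|r|+|\beta_{\Gamma,\lambda}(r)|)$ uniform in $\lambda$ is available.

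Your fallbacks do not close the argument either. The product $(u_\lambda-\phi_\lambda)\beta_\lambda(\phi_\lambda)$ has no sign. The duality route needs $\beta_\lambda(u_\lambda)$ bounded in $L^2(0,T;H^1(\Omega))$, which fails, as you observed yourself. Consequently \eqref{est10} and \eqref{est12}, which rely on \eqref{est9}, are not established either.

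The missing idea is to apply convexity, not monotonicity, to the original term, so that \eqref{gc} is used where it lives, on the potentials:
\begin{equation*}
\frac{1}{K}\int_\Gamma(\phi_\lambda-u_\lambda)\beta_\lambda(u_\lambda)\dg \le \frac{1}{K}\int_\Gamma\hat{\beta}_\lambda(\phi_\lambda)\dg-\frac{1}{K}\int_\Gamma\hat{\beta}_\lambda(u_\lambda)\dg \le \frac{1}{K}\int_\Gamma\hat{\beta}_\lambda(\phi_\lambda)\dg .
\end{equation*}
Then transfer \eqref{gc} to the Moreau--Yosida level. For every $s$, using $|s|^2\le 2|s-r|^2+2|r|^2$ and $\lambda\le 1$, one has $\frac{1}{2\lambda}|r-s|^2+\hat{\beta}(s)\le(1+4c_0)\bigl(1+|r|^2+\frac{1}{2\lambda}|r-s|^2+\hat{\beta}_\Gamma(s)\bigr)$. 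Minimizing in $s$ gives $\hat{\beta}_\lambda(r)\le(1+4c_0)\bigl(1+|r|^2+\hat{\beta}_{\Gamma,\lambda}(r)\bigr)$.

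By \eqref{est1}, the right-hand side evaluated at $\phi_\lambda$ is bounded in $L^\infty(0,T;L^1(\Gamma))$ by $M\bigl(1+K^{-1/2}\|u_0-\phi_0\|_{L^2(\Gamma)}\bigr)^2$. The boundary contribution is therefore $1/K$ times this quantity. After taking square roots this produces exactly the factor $(1+K^{-1/2})$ in $C(K)$, and \eqref{est6} is not even needed.
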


\begin{proof}
Multiplying \eqref{ch2lam} by $\beta_\lambda(u_\lambda)$ and integrating it over $\Omega$, thanks to \eqref{rbclam} and to the subdifferential property for $\beta_\lambda$ we deduce that
\begin{align}
	& \int_\Omega \beta_\lambda' (u_\lambda) |\nabla u_\lambda|^2 \dx 
	+ \int_\Omega \bigl| \beta_\lambda(u_\lambda) \bigr|^2 \dx 
	\notag \\
	& \le \frac{1}{K} \int_\Gamma (\phi_\lambda-u_\lambda) \beta_\lambda (u_\lambda) \dg 
	+ \bigl\| \mu_\lambda -\lambda \partial_t u_\lambda -\pi (u_\lambda) + f \bigr\|_{L^2(\Omega)} 
	\bigl\| \beta_\lambda (u_\lambda) \bigr\|_{L^2(\Omega)} 
	\notag \\
	& \le \frac{1}{K} \int_\Gamma \hat{\beta}_\lambda (\phi_\lambda) \dg 
	-\frac{1}{K} \int_\Gamma \hat{\beta}_\lambda (u_\lambda) \dg 
	+ \frac{1}{2} \bigl\| \mu_\lambda -\lambda \partial_t u_\lambda -\pi (u_\lambda) + f \bigr\|_{L^2(\Omega)} ^2
	\notag \\
	& \quad {}
	+ \frac{1}{2} \bigl\| \beta_\lambda (u_\lambda) \bigr\|_{L^2(\Omega)}^2. 
	\label{trick}
\end{align}
Now, we recall the growth condition \eqref{gc} of the assumption {\rm (A5)} and observe that, for~$r,s \in \mathbb{R}$,
\begin{align*}
	\frac{1}{2\lambda} |r-s|^2 + \hat{\beta}(s) 
	& \le \frac{1}{2\lambda}|r-s|^2 + c_0\bigl( 1+|s|^2+ \hat{\beta}_\Gamma(s) \bigr) \\
	& \le \frac{1}{2\lambda}|r-s|^2 + c_0+2c_0|s-r|^2+2c_0|r|^2+ c_0 \hat{\beta}_\Gamma(s)\\
	& \le (1+4c_0)\frac{1}{2\lambda}|r-s|^2 + c_0+2c_0|r|^2+c_0 \hat{\beta}_\Gamma(s) \\
	& \le (1+4c_0)\left( 1+ |r|^2 + \frac{1}{2\lambda} |r-s|^2 + \hat{\beta}_\Gamma(s) \right),
\end{align*}
whence we can take the minimum with respect to $s$ in both sides finding that 
\begin{equation*}
	\hat{\beta}_\lambda(r) \le (1+4c_0 )\bigl( 1+ |r|^2 + \hat{\beta}_{\Gamma, \lambda}(r) \bigr)
	\quad {\rm for~} r \in \mathbb{R}. 
\end{equation*}
Thus, integrating \eqref{trick} over $(0,T)$ and exploiting \eqref{est1}, \eqref{est2}, and \eqref{est5}, we obtain 
\begin{align*}
	& \frac{1}{2}\int_0^T \bigl| \beta_\lambda(u_\lambda) \bigr|_{L^2(\Omega)}^2 \dt
	\notag \\
	& \le \frac{1}{K} \int_0^T \! \! \int_\Gamma \hat{\beta}_\lambda (\phi_\lambda) \dg \dt
	+ \frac{1}{2} \int_0^T \bigl\| \mu_\lambda -\lambda \partial_t u_\lambda -\pi (u_\lambda)
	+ f \bigr\|_{L^2(\Omega)} ^2  \dt \\
	& \le \frac{1}{K} (1+4c_0) \int_0^T \! \! \int_\Gamma \left( 1+ |\phi_\lambda|^2 + \hat{\beta}_{\Gamma, \lambda}
	\bigl( \phi_\lambda \bigr) \right)\dg \dt 
	+ 2 \int_0^T \| \mu_\lambda \|_{L^2(\Omega)} ^2 \dt \notag \\
	& \quad {}
	+ 2 \lambda^2 \int_0^T \| \partial_t u_\lambda \|_{L^2(\Omega)} ^2 \dt 
	+ 2 \int_0^T \bigl\| \pi (u_\lambda) \bigr\|_{L^2(\Omega)} ^2 \dt 
	+ 2 \int_0^T \|f\|_{L^2(\Omega)} ^2 \dt \notag \\
	& \le M_8 \frac{1}{K} \left(1
	+ \frac{1}{\sqrt{K}} \| u_0-\phi_0 \|_{L^2(\Gamma)} \right)^{\!\! 2} 
	+ C(K,m_0')^2+ M_8 \left(1
	+ \frac{1}{\sqrt{K}} \| u_0-\phi_0 \|_{L^2(\Gamma)} \right)^{\!\! 2},
\end{align*}
for some constant $M_8>0$. Hence, \eqref{est9} follows.
Moreover, from the comparison in the equation \eqref{ch2lam}, owing to \eqref{est5}, \eqref{est9}, and \eqref{est1} we infer that
\begin{align*}
	& \| \Delta u_\lambda \|_{L^2(0,T;L^2(\Omega))} \\
	& \le  \| \mu_\lambda \|_{L^2(0,T;L^2(\Omega))}
	   + \bigl\| \beta_\lambda(u_\lambda) \bigr\|_{L^2(0,T;L^2(\Omega))}
	    + \bigl\| f  -\lambda \partial_t u_\lambda -\pi (u_\lambda) \bigr\|_{L^2(0,T;L^2(\Omega))} \\
	   & \le C(K,m_0') + C(K), 
\end{align*}
which is \eqref{est10}. 
On the other hand, from the boundary condition \eqref{rbclam} and estimate~\eqref{est1} it follows that
\begin{align*}
	\| \partial _{\boldsymbol{\nu}} u_\lambda \|_{L^2(0,T;H^{1/2}(\Gamma))} 
	& \le \frac{1}{K} \| \phi_\lambda \|_{L^2(0,T;H^{1/2}(\Gamma))} + \frac{1}{K}\| u_\lambda \|_{L^2(0,T;H^{1/2}(\Gamma))} \notag \\
	& \le \frac{1}{K} \sqrt{T} \bigl( 1+ C_{\rm tr}\bigr) M'_8 \left(  1 + \frac{1}{\sqrt{K}}\|u_0 - \phi_0 \|_{L^2(\Gamma)} \right) ,
\end{align*}
with $M'_8 >0 $ sufficiently large and where $C_{\rm tr} >0$ is a constant acting in the trace inequality
$\| z \|_{H^{1/2}(\Gamma)} \le C_{\rm tr} \| z \|_{H^1(\Omega)}$ for all $z \in H^1(\Omega)$. 
Thus, the estimate \eqref{est11} holds. 
Finally, applying the elliptic estimate \eqref{eli} 
leads to \eqref{est12}.
\end{proof}

\subsection{Proof of Theorem 2.1.} Now, keeping 
$K>0$ fixed, we aim to pass to the limit as $\lambda \to 0$. More precisely, 
by virtue of the uniform estimates obtained above we can prove Theorem~\ref{lambda} as follows. 
\smallskip

\begin{proof} From the uniform estimates obtained in Lemmas~\ref{L1}, \ref{L2}, \ref{L3}, \ref{L4}, and \ref{L5}, we see that  
there exist a subsequence $\{ \lambda_n\}$, with $\lambda_n \to 0$ as $n \to +\infty$, and 
a quintuplet $(u, \xi, \mu, \phi, \psi)$ satisfying \eqref{clu}--\eqref{clpsi} and such that 
\begin{align*} 
	u_{\lambda_n} \to u 
	& \quad {\rm weakly~star~in~} \\ &\quad\quad
	H^1 \bigl( 0,T;H^1(\Omega)' \bigr) \cap L^\infty \bigl( 0,T ; H^1(\Omega) \bigr) \cap L^2 \bigl( 0,T ; H^2(\Omega) \bigr), \\
	\partial_{\boldsymbol{\nu}} u_{\lambda_n} \to 
	\partial_{\boldsymbol{\nu}} u 
	& \quad {\rm weakly~in~} 
	L^2 \bigl( 0,T ; L^2(\Gamma) \bigr), \\
	\lambda_n u_{\lambda_n} \to 0 
	& \quad {\rm strongly~in~}
	H^1\bigl( 0,T;L^2(\Omega)\bigr), \\
	\beta_{\lambda_n}(u_{\lambda_n}) \to \xi 
	& \quad {\rm weakly~in~} 
	L^2 \bigl( 0,T ; L^2(\Omega) \bigr), \\
	\mu_{\lambda_n} \to \mu 
	& \quad {\rm weakly~in~} 
	L^2 \bigl( 0,T ; H^1(\Omega) \bigr), \\
	\lambda_n \mu_{\lambda_n} \to 0 
	& \quad {\rm strongly~in~} L^2 \bigl( 0,T ; L^2(\Omega) \bigr), \\
	\phi_{\lambda_n} \to \phi 
	& \quad {\rm weakly~star~in~} \\ &\quad\quad
	H^1 \bigl( 0,T; L^2(\Gamma) \bigr) \cap L^\infty \bigl( 0,T ; H^1(\Gamma) \bigr) \cap L^2 \bigl( 0,T ; H^2(\Gamma) \bigr), \\
	\beta_{\Gamma,\lambda_n}(\phi_{\lambda_n}) \to \psi 
	& \quad {\rm weakly~in~} 
	L^2 \bigl( 0,T ; L^2(\Gamma) \bigr)
\end{align*}
as $n \to +\infty$. Moreover, applying the Aubin--Lions compactness theorems (see, e.g., \cite[Sect.~8, Cor.~4]{Sim87}) and invoking the Lipschitz continuity of $\pi$ and $\pi_\Gamma$, we obtain the strong convergence
\begin{align*} 
	u_{\lambda_n} \to u & \quad {\rm strongly~in~} C\bigl([0,T]; L^2(\Omega) \bigr) \cap 
	L^2 \bigl( 0,T ; H^1(\Omega) \bigr), \\
	\phi_{\lambda_n} \to \phi & \quad {\rm strongly~in~} C\bigl([0,T]; L^2(\Gamma) \bigr) \cap 
	L^2 \bigl( 0,T ; H^1(\Gamma) \bigr), \\
	\pi(u_{\lambda_n}) \to \pi(u) & \quad {\rm strongly~in~} C\bigl([0,T]; L^2(\Omega) \bigr), \\
	\pi_\Gamma(\phi_{\lambda_n}) \to \pi_\Gamma (\psi) & \quad {\rm strongly~in~} C\bigl([0,T]; L^2(\Gamma) \bigr)
\end{align*}
as $n \to +\infty$. Then, from the demi-closedness of the maximal monotone operators induced by $\beta$ and $\beta_\Gamma$, we can easily deduce that (cf., e.g., \cite[Prop.~2.2, p.~38]{Bar10})
\begin{equation*}
	\xi \in \beta(u) \quad {\rm a.e.\ in~} Q, \quad 
	\psi \in \beta_\Gamma (\phi) \quad {\rm a.e.\ on~} \Sigma.
\end{equation*}
Thus, we can pass to the limit in \eqref{ch2lam}, \eqref{dbclam}, \eqref{rbclam}, \eqref{ic1lam}, and \eqref{ic2lam} 
deducing \eqref{ch2}, \eqref{dbc}, \eqref{rbc}, \eqref{ic1}, and \eqref{ic2}, respectively.  
Moreover, about \eqref{ch1} we can pass to the limit in the weak formulation obtained from \eqref{ch1lam} with \eqref{nbclam}, that is,
\begin{equation*}
	\int_0^T \langle \partial _t u_{\lambda_n}, \eta \rangle_{H^1(\Omega)',H^1(\Omega)} \dt + 
	\int_0^T (\lambda_n \mu_{\lambda_n} , \eta )_{L^2(\Omega)} \dt 
	+ \int_0^T (\nabla \mu_{\lambda_n}, \nabla \eta )_{L^2(\Omega)} \dt 
	= 0
\end{equation*}
for all $\eta \in L^2 (0,T;H^1(\Omega))$. Thus, we complete the proof of the existence. 
\smallskip

Next, we prove the uniqueness properties. Before that, we recall an important tool often used in connection with Cahn--Hilliard systems.
Namely, for a given $\zeta\in H^1 (\Omega)'$, we consider the generalized Neumann problem of finding 
$z \in H^1 (\Omega)$ such that 
\begin{equation}
 \int_\Omega \nabla z \cdot \nabla v \dx
  = \langle \zeta , v \rangle_{H^1 (\Omega)', H^1 (\Omega)}
  \quad \text{\rm for~all~} v\in H^1 (\Omega).
  \label{neumann}
\end{equation}
Since $\Omega$ is connected and smooth, 
it is well known that the above problem admits solutions $z$ if and only if $\zeta$ has zero mean value. 
Hence, recalling the notation~\eqref{pier2}, we can introduce the following solution operator $\mathcal{N}$ by setting
\begin{equation*}
  \mathcal{N}:
\begin{array}{ccc}
	D(\mathcal{N}) := \bigl\{\zeta\in H^1 (\Omega)':\ m(\zeta)=0 \bigr\} &  \to & \bigl\{ z\in H^1 (\Omega):
  \ m(z)=0 \bigr\} \\
  \rotatebox{90}{\ensuremath{\in}} & & \rotatebox{90}{\ensuremath{\in}} \\
  \zeta & \mapsto & z, 
\end{array}
\end{equation*}
where $z$ uniquely solves \eqref{neumann} along with $m(z)=0$.
It turns out that $\mathcal{N}$ is an isomorphism between the above spaces and that the formula
\begin{equation}
  \|\zeta \|_*^2 := \int_\Omega \bigl| \nabla\mathcal{N} \bigl( \zeta-m(\zeta) \bigr)\bigr|^2 \dx + \bigl| m(\zeta) \bigr|^2
  \quad \hbox{for~all~} \zeta\in H^1 (\Omega)'
  \label{normstar}
\end{equation}
defines a Hilbert norm in $H^1 (\Omega)'$ that is equivalent to the standard norm of~$H^1 (\Omega)'$.
From the above properties, one can obtain the following identities:
\begin{gather}
	\int_\Omega \nabla \mathcal{N} \zeta \cdot \nabla v \dx
   = \langle \zeta , v \rangle_{H^1 (\Omega)', H^1 (\Omega)}
  \quad \text{for all } \zeta \in D(\mathcal{N}) \ \text{and} \ v\in H^1 (\Omega),
  \label{dadefN}\\
  \langle \zeta ,\mathcal{N}\zeta \rangle_{H^1 (\Omega)', H^1 (\Omega)}
  =\int_\Omega |\nabla\mathcal{N}\zeta|^2 \dx
  = \|\zeta \|_*^2 \quad \text{for all } \zeta \in D(\mathcal{N}),
  \label{pier12}
\end{gather}
if $\zeta \in D(\mathcal{N})\cap L^2(\Omega)$ then $\mathcal{N} \zeta \in H^2(\Omega)$ and 
\begin{gather}
\begin{cases}
  -\Delta\mathcal{N} \zeta 
  = \zeta \quad \text{a.e. in } \Omega, \\
  \partial_{\boldsymbol{\nu}} \mathcal{N} \zeta = 0 \quad \text{a.e. on } \Gamma. 
\end{cases}
  \label{laplaceN}
\end{gather}
We also point out that
\begin{equation}
 \bigl\langle \partial_t \zeta(t) ,\mathcal{N}\zeta(t) \bigr\rangle_{H^1 (\Omega)', H^1 (\Omega)}
  = \frac 12 \, \frac{\d}{\dt} \, \bigl \|\zeta(t) \bigr\|_*^2
  \quad \hbox{for a.a.\ $t\in(0,T)$},
  \label{propN} 
\end{equation}
which holds true for every $\zeta \in H^1(0,T;H^1(\Omega)') \cap L^2(0,T;H^1(\Omega))$ satisfying $m(\zeta) =0$ a.e.\ in~$(0,T)$.
\smallskip

Then, let now $(u^{(i)}, \xi^{(i)}, \mu^{(i)}, \phi^{(i)}, \psi^{(i)})$ be two sets of solutions $(i=1,2)$
to the problem stated in \eqref{clu}--\eqref{ic2} and put $\bar{u}:=u^{(2)}-u^{(1)}$, analogously defining $\bar{\xi}$, $\bar{\mu}$, $\bar{\phi}$, $\bar{\psi}$, respectively. 
Recalling \eqref{const}, note that $m(\bar{u})=m(u^{(2)})-m(u^{(1)})=m_0-m_0=0$ in $[0,T]$. 
Taking the difference between \eqref{ch1} written for $u^{(2)}$ and $u^{(1)}$ gives
\begin{equation}
	\langle \partial _t \bar{u}, y \rangle_{H^1(\Omega)', H^1(\Omega)} + 
	\int_\Omega \nabla \bar{\mu} \cdot \nabla y \dx = 0 \quad {\rm for~all~} y \in H^1(\Omega) ,
	\label{dual}
\end{equation}
a.e.\ on $(0,T)$. Now, we can  
take $y =\mathcal{N}\bar{u}$ as test function in \eqref{dual} and, due to \eqref{propN} and \eqref{dadefN}, deduce that
\begin{equation*}
	\frac{1}{2} \frac{\d}{\dt} \| \bar{u}\|_{*}^2 
	+ (\bar{\mu},\bar{u})_{L^2(\Omega)} = 0
	\quad {\rm a.e.\ in~} (0,T).
\end{equation*}
Next, testing the difference of \eqref{ch2} by  $\bar{u}$ and using \eqref{rbc}, we have 
\begin{equation*}
	 \|\nabla  \bar{u} \|_{L^2(\Omega)}^2 + 
	(\bar{\xi}, \bar{u})_{L^2(\Omega)} + \bigl( \pi(u^{(2)}) -\pi(u^{(1)}), \bar{u} )_{L^2(\Omega)}
	- \frac{1}{K} (\bar{\phi}-\bar{u}, \bar{u})_{L^2(\Gamma)} = (\bar{\mu},\bar{u})_{L^2(\Omega)} .  
\end{equation*}
Moreover, testing the difference of \eqref{dbc} by $\bar{\phi}$ we obtain
\begin{align*}
	& \frac{1}{2} \frac{\d}{\dt} \| \bar{\phi}\|_{L^2(\Gamma)}^2 
	 +  \int_\Gamma | \nabla _\Gamma \bar{\phi} |^2 \dg  
	 \notag \\
	 & \quad {}+ (\bar{\psi}, \bar{\phi})_{L^2(\Gamma)} + 
	 \bigl( \pi_\Gamma(\phi^{(2)}) -\pi_\Gamma(\phi^{(1)}), \bar{\phi} \bigr)_{L^2(\Gamma)}
	+ \frac{1}{K} (\bar{\phi}-\bar{u}, \bar{\phi})_{L^2(\Gamma)}=0.  
\end{align*}
Combining the last three equations with a cancellation of terms, thanks to the monotonicity of $\beta$ and $\beta_\Gamma$, we infer that
\begin{align}
	& \frac{1}{2} \frac{\d}{\dt} \| \bar{u}\|_{*}^2 
	+ \frac{1}{2} \frac{\d}{\dt} \| \bar{\phi}\|_{L^2(\Gamma)}^2 
	+ \|\nabla  \bar{u} \|_{L^2(\Omega)}^2
	+ \frac{1}{K} \| \bar{\phi}-\bar{u} \|_{L^2(\Gamma)}^2 
	\notag \\
	& \le C_{\rm L} \Bigl( \| \bar{u} \|_{L^2(\Omega)}^2 
	+ \| \bar{\phi}\|_{L^2(\Gamma)}^2 \Bigr) \notag \\
	& \le C_{\rm L} \Bigl( \delta \|\nabla  \bar{u} \|_{L^2(\Omega)}^2 + C_\delta \| \bar{u} \|_{*}^2
	+ \| \bar{\phi}\|_{L^2(\Gamma)}^2 \Bigr), \label{pier4}
\end{align}
a.e.\ in $(0,T)$, where $C_{\rm L}$ is a positive constant related to the Lipschitz constants of $\pi$ and $\pi_\Gamma$. 
Moreover, as $m(\bar{u})=0$ in $[0,T]$ here we could use the Ehrling--Lions lemma (see, e.g., \cite[Lemme 5.1, p.~58]{Lio69}) along with the Poincar\'e--Wirtinger inequality (cf.~\eqref{pier3}), so that for each $\delta>0$ there exists a positive constant $C_\delta$ such that
\begin{equation}
	\| \bar{u} \|_{L^2(\Omega)}^2 \le 
	\delta \|\nabla  \bar{u} \|_{L^2(\Omega)}^2 + C_\delta \| \bar{u} \|_{*}^2.
	\label{EL}
\end{equation}
Finally, we choose $\delta$ sufficiently small in \eqref{pier4}, e.g., $C_L \delta \leq 1/2$, then apply the Gronwall inequality 
with null initial conditions for $\bar{u}$ and $\bar{\phi}$, obtained from \eqref{ic1} and \eqref{ic2}.
\smallskip

Hence, we prove that $\bar{u} =0$ a.e.\ in $Q$ and $\bar{\phi} =0$ a.e.\ on $\Sigma$. This entails the uniqueness of the components $u$ and $\phi$ solving \eqref{clu}--\eqref{ic2}. 
Moreover, by the comparison in the equation \eqref{dbc}, $\psi$ is unique. 
If $\beta$ is single-valued, then $\xi = \beta(u)$ is also unique, and by the comparison in the equation \eqref{ch2}, $\mu$ is uniquely determined as well.
\end{proof}

\section{Asymptotic behavior related to zero and full permeability}
\label{Kzeroandinfty}
\setcounter{equation}{0} 

In this section, we discuss the asymptotic behavior related to $K \to \infty$ and $K \to 0$.

\subsection{Asymptotic behavior related to zero permeability}
As seen in the previous section, more precisely from the uniform estimates obtained in Lemmas~\ref{L1}, \ref{L2}, \ref{L3}, \ref{L4}, and~\ref{L5}, 
it is possible to study the limiting procedure as $K \to +\infty$. 
This corresponds to the case where the permeability $\alpha := 1/K$ tends to $0$; in other words, the system between the bulk and the boundary becomes completely decoupled. 
We can rigorously prove this situation as follows.

\begin{theorem}
\label{a0} 
Assume {\rm (A1)}--{\rm (A6)}. Let $(u_\alpha,\xi_\alpha, \mu_\alpha,\phi_\alpha, \psi_\alpha)$ 
be the solution to \eqref{clu}--\eqref{ic2} obtained in {\rm Theorem~\ref{lambda}}. 
Then, there exists a unique function 
\begin{equation*}
	u \in H^1 \bigl( 0,T;H^1(\Omega)' \bigr) \cap L^\infty \bigl( 0,T ; H^1(\Omega) \bigr) \cap L^2 \bigl( 0,T ; H^2(\Omega) \bigr)
\end{equation*}
such that
\begin{align}
	u_{\alpha} \to u & \quad \mbox{\it weakly~star~in~} 
	H^1 \bigl( 0,T;H^1(\Omega)' \bigr) \cap L^\infty \bigl( 0,T ; H^1(\Omega) \bigr) \cap L^2 \bigl( 0,T ; H^2(\Omega) \bigr) \notag \\ 
	& \quad \mbox{and strongly~in~} C\bigl( [0,T]; L^2(\Omega) \bigr) \cap L^2 \bigl(0,T;H^1(\Omega) \bigr)\quad \mbox{as~} \alpha \to 0 \label{pier5}
\end{align}
and there exist a subsequence $\{\alpha_n\}$ and functions 
\begin{equation*}
	\xi \in  L^2 \bigl( 0,T ; L^2(\Omega) \bigr), \quad 
	\mu  \in L^2 \bigl( 0,T ; H^1(\Omega) \bigr)
\end{equation*}
such that
\begin{align*}
	\xi_{\alpha_n} \to \xi & \quad \mbox{\it weakly~in~}  L^2 \bigl( 0,T ; L^2(\Omega) \bigr), \\ 
	\mu_{\alpha_n} \to \mu & \quad \mbox{\it weakly~in~}  L^2 \bigl( 0,T ; H^1(\Omega) \bigr)
	\quad \mbox{\it as~} n \to +\infty.
\end{align*}
Moreover, the triplet $(u,\xi, \mu)$
satisfies the following Cahn--Hilliard system in the bulk $Q$:
\begin{align*}
	\langle \partial _t u, y \rangle_{H^1(\Omega)', H^1(\Omega)} +\int_\Omega \nabla \mu \cdot \nabla y \dx = 0 \quad {\it for~all~} y \in H^1(\Omega), & \quad {\it a.e.\ on~}(0,T), \\
	-\Delta u + \xi + \pi (u) - f  = \mu, \quad \xi \in \beta(u) & \quad {\it a.e.\ in~} Q, \\
	\partial_{\boldsymbol{\nu}} u = 0 & \quad {\it a.e.\ on~} \Sigma, \\
	u(0) = u_0 & \quad {\it a.e.\ in~}\Omega. 
\end{align*}
Next, there exist a unique pair $(\phi, \psi)$ of functions 
\begin{gather*}
	\phi \in H^1 \bigl( 0,T;L^2(\Gamma) \bigr) \cap L^\infty \bigl( 0,T ; H^1(\Gamma) \bigr) \cap L^2 \bigl( 0,T ; H^2(\Gamma) \bigr), \\
	\psi \in  L^2 \bigl( 0,T ; L^2(\Gamma) \bigr) 
\end{gather*}
such that
\begin{align}
	\phi_\alpha \to \phi & \quad \mbox{\it weakly~star~in~} H^1 \bigl( 0,T;L^2(\Gamma) \bigr) \cap L^\infty \bigl( 0,T ; H^1(\Gamma) \bigr) \cap L^2 \bigl( 0,T ; H^2(\Gamma) \bigr) \notag\\
	& \quad \mbox{\it strongly~in~} C\bigl( [0,T]; L^2(\Gamma) \bigr) \cap L^2 \bigl(0,T;H^1(\Gamma) \bigr), 
	\label{pier6}\\
	\psi_\alpha \to \psi & \quad \mbox{\it weakly~in~} L^2 \bigl( 0,T ; L^2(\Gamma) \bigr) 
	\quad \mbox{\it as~} \alpha \to 0. \label{pier7} 
\end{align}
Moreover, the pair $(\phi, \psi)$ 
satisfies the following  Allen--Cahn equation on the boundary $\Sigma$: 
\begin{align*}
	\partial _t \phi - \Delta_\Gamma \phi + \psi + \pi_\Gamma (\phi) =f_\Gamma,\quad \psi \in \beta_{\Gamma} (\phi) & \quad {\it a.e.\ on~} \Sigma,
	\\
	\phi(0) = \phi_0 & \quad {\it a.e.\ on~} \Gamma. 
\end{align*}
Finally, the following rate of convergence holds as $\alpha \to 0$:
\begin{equation}
\label{pier8}
	\| u-u_\alpha \|_{C([0,T];H^1(\Omega)') \cap L^2(0,T;H^1(\Omega))} + 
	\| \phi-\phi_\alpha \|_{C([0,T];L^2(\Gamma)) \cap L^2(0,T;H^1(\Gamma))}
	= O(\alpha^{1/2}) .
\end{equation} 

\end{theorem}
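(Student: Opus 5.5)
We first observe that for $K=1/\alpha\ge 1$ the constants $C(K)$ and $C(K,m_0')$ are bounded independently of $\alpha\in(0,1]$. Indeed, $1/K=\alpha\le 1$, and $\|u_0-\phi_0\|_{L^2(\Gamma)}$ is finite because $u_0\in H^1(\Omega)$ and $\phi_0\in H^1(\Gamma)$. Therefore the estimates of Lemmas~\ref{L1}--\ref{L5} hold uniformly in $\lambda$ and $\alpha$. By weak lower semicontinuity they pass to the limit $\lambda\to0$ and become bounds on $(u_\alpha,\xi_\alpha,\mu_\alpha,\phi_\alpha,\psi_\alpha)$ in the spaces listed in the statement. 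In addition, \eqref{est1} gives
\[
\|u_\alpha-\phi_\alpha\|_{L^\infty(0,T;L^2(\Gamma))}\le M\sqrt{K}=M\alpha^{-1/2},
\]
so that $\partial_{\boldsymbol\nu}u_\alpha=\alpha(\phi_\alpha-u_\alpha)=O(\alpha^{1/2})$ in $L^\infty(0,T;L^2(\Gamma))$. Finally, \eqref{est11} shows that $\partial_{\boldsymbol\nu}u_\alpha\to0$ in $L^2(0,T;H^{1/2}(\Gamma))$.

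Next we extract weakly star convergent subsequences and use Aubin--Lions, exactly as in the proof of Theorem~\ref{lambda}, to obtain strong convergence in $C([0,T];L^2)\cap L^2(0,T;H^1)$ for both $u_\alpha$ and $\phi_\alpha$. We then pass to the limit in the equations. The coupling term $\alpha(\phi_\alpha-u_\alpha)$ in \eqref{dbc} tends to $0$ in $L^2(\Sigma)$, since its norm is $O(\alpha^{1/2})$. The boundary condition becomes $\partial_{\boldsymbol\nu}u=0$ through weak convergence of normal derivatives from the $L^2(0,T;H^2(\Omega))$ bound. The inclusions $\xi\in\beta(u)$ and $\psi\in\beta_\Gamma(\phi)$ follow from demiclosedness. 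The limit problems are decoupled: the Cahn--Hilliard equation with homogeneous Neumann conditions, and the boundary Allen--Cahn equation. Each has a unique $u$, respectively a unique pair $(\phi,\psi)$, by the same contraction argument used for uniqueness in Theorem~\ref{lambda}. Uniqueness then upgrades subsequence convergence to convergence of the whole family for $u_\alpha$, $\phi_\alpha$ and $\psi_\alpha$; $\psi$ is unique by comparison in the boundary equation. For $\xi$ and $\mu$, which need not be unique, only subsequence convergence is claimed.

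For the rate \eqref{pier8} we repeat the uniqueness estimate with $\bar u=u_\alpha-u$ and $\bar\phi=\phi_\alpha-\phi$, where $m(\bar u)=0$. We test the difference of the first equations by $\mathcal N\bar u$, the difference of the second equations by $\bar u$, and the boundary difference by $\bar\phi$. The only new terms are the following:
\[
-\alpha\int_\Gamma(\phi_\alpha-u_\alpha)\bar u\dg \quad\text{(from the bulk)},\qquad \alpha\int_\Gamma(\phi_\alpha-u_\alpha)\bar\phi\dg \quad\text{(from the boundary)}.
\]
We bound each of them by $\|\alpha(\phi_\alpha-u_\alpha)\|_{L^2(\Gamma)}$ times $\|\bar u\|_{L^2(\Gamma)}$ or $\|\bar\phi\|_{L^2(\Gamma)}$. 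For the bulk term we use the trace inequality together with Ehrling's lemma,
\[
\|\bar u\|_{L^2(\Gamma)}^2\le\delta\|\nabla\bar u\|^2_{L^2(\Omega)}+C_\delta\|\bar u\|_*^2,
\]
and then apply Young's inequality. This produces a forcing of size $\|\alpha(\phi_\alpha-u_\alpha)\|^2_{L^2(0,T;L^2(\Gamma))}=O(\alpha)$. Since the initial data coincide, Gronwall's inequality yields
\[
\|\bar u\|_{C([0,T];H^1(\Omega)')}^2+\|\bar\phi\|_{C([0,T];L^2(\Gamma))}^2+\|\nabla\bar u\|_{L^2(0,T;L^2(\Omega))}^2=O(\alpha).
\]
Control of $\|\nabla_\Gamma\bar\phi\|_{L^2(0,T;L^2(\Gamma))}$ comes from the same estimate, and adding back the $L^2$ norms through \eqref{EL} gives \eqref{pier8}.

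We expect the main obstacle to be the handling of the Lipschitz and monotone terms in this last step. The monotone parts $(\bar\xi,\bar u)$ and $(\bar\psi,\bar\phi)$ are nonnegative, and the $\pi,\pi_\Gamma$ terms are treated exactly as in \eqref{pier4}. What requires care is bookkeeping that $\mu$ and $\xi$ appear only through the cancellation $(\bar\mu,\bar u)$, so that their non-uniqueness is harmless. Everything therefore rests on the uniform bound $\alpha^{1/2}\|u_\alpha-\phi_\alpha\|\le M$ inherited from \eqref{est1}.
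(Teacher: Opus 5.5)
Your proposal is correct and follows the same route as the paper. It uses uniform bounds from Lemmas~\ref{L1}--\ref{L5} with $K=1/\alpha\ge 1$, then compactness and demiclosedness, then uniqueness of the two decoupled limit problems to get full-family convergence, and finally the $\mathcal{N}$-based Gronwall argument from the uniqueness proof to obtain the rate. The only deviation is in the coupling terms of the rate estimate: the paper adds them to obtain the good-sign term $\alpha\|\phi_\alpha-u_\alpha\|_{L^2(\Gamma)}^2$ plus the cross term $\alpha(u_\alpha-\phi_\alpha,u-\phi)_{L^2(\Gamma)}$, which it controls using only bounds on the limit pair $(u,\phi)$. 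You instead bound each term separately via $\alpha^{1/2}\|u_\alpha-\phi_\alpha\|_{L^\infty(0,T;L^2(\Gamma))}\le M$ from \eqref{est1} and a trace version of \eqref{EL}, and both give an $O(\alpha)$ forcing.
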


\begin{proof}
The proof of this theorem is quite standard. From the uniform estimates obtained in the previous section, more precisely, the same kind of estimates hold with  
$u_\lambda$, $\mu_\lambda$, $\phi_\lambda$ replaced by $u_\alpha$, $\mu_\alpha$, $\phi_\alpha$, 
and with $\beta_\lambda(u_\lambda)$, $\beta_{\Gamma, \lambda}(\phi_\lambda)$ 
replaced by $\xi_\alpha$, $\psi_\alpha$, respectively. 
Thus, there exist a subsequence $\{\alpha_n\}$ and a quintuple $(u,\xi,\mu,\phi,\psi)$ such that 
the same kind of convergences as in the proof of Theorem~\ref{lambda} hold. 
Of course, the relations $\xi \in \beta(u)$ a.e.\ in $Q$ and $\psi \in \beta_\Gamma(\phi)$ a.e.\ on $\Sigma$ are a consequence of the demi-closedness of $\beta$ and $\beta_\Gamma$, together with $\xi_\alpha \in \beta(u_\alpha)$ a.e.\ in $Q$ and $\psi_\alpha \in \beta_\Gamma(\phi_\alpha)$ a.e.\ on $\Sigma$ (see \eqref{ch2} and \eqref{dbc}).
Especially, by virtue of \eqref{est1} and \eqref{est11}, we have that
\begin{align*}
	\partial _{\boldsymbol{\nu}} u_{\alpha_n} = \alpha_n(\phi_{\alpha_n}-u_{\alpha_n}) \to 0 & \quad  {\rm strongly~in~}L^\infty \bigl( 0,T;L^2(\Gamma) \bigr),\\
	\partial _{\boldsymbol{\nu}} u_{\alpha_n}  \to 0 & \quad 
	 {\rm strongly~in~}L^2 \bigl( 0,T; H^{1/2}(\Gamma) \bigr)
\end{align*} 
as $n \to +\infty$.   
Thus, taking the limit in \eqref{ch1}--\eqref{ic2} we conclude that 
$(u,\xi, \mu)$ solves the Cahn--Hilliard system and 
$(\phi, \psi)$ satisfies the Allen--Cahn equation. 
The proofs of uniqueness for $(\phi, \psi)$ in the Allen--Cahn equation, 
and for $u$ in the Cahn--Hilliard system are standard (quite similar to the proof of Theorem~\ref{lambda} above, therefore we omit them). Owing to uniqueness, it turns out that
the convergences \eqref{pier5}, \eqref{pier6}, \eqref{pier7} hold for the full sequence. Of course if the pair
$(\xi, \mu)$ is also unique, the convergence for 
$\xi_\alpha$ and $\mu_\alpha$ also hold for full family as $\alpha \to 0$. 
\smallskip

As a remark, the well-posedness of the limit problems can also be proven by alternative methods, since the bulk problem and the boundary problem are independent of each other and each constitutes an initial value problem for a well-known evolution differential system.
\smallskip

Finally, we prove the rate of convergence. 
We start by recalling \eqref{dual}, since the structure here is the same as in the proof of the uniqueness 
for Theorem~\ref{lambda}. First, we have 
$m(u(t)-u_\alpha(t))=0$ for all $t \in [0,T]$, therefore we can deduce --- from the difference between 
the weak formulation of Cahn--Hilliard system of $u$ and \eqref{ch1} --- that
\begin{equation}
	\frac{1}{2} \frac{\d}{\dt} \| u-u_\alpha \|_{*}^2 
	 +  ( \mu-\mu_\alpha, u-u_\alpha)_{L^2(\Omega)}=0
	\label{rate1}
\end{equation}
a.e.\ on $(0,T)$, by using the test function  $y =\mathcal{N} (u-u_\alpha)$. 
Moreover, from the difference between 
the equation of $\mu$ and \eqref{ch2} it follows that
\begin{align}
	 & \bigl\| \nabla(u-u_\alpha) \bigr\|_{L^2(\Omega)}^2 
	+ \alpha (\phi_\alpha-u_\alpha, u-u_\alpha)_{L^2(\Gamma)} + 
	(\xi-\xi_\alpha, u-u_\alpha)_{L^2(\Omega)}
	 \notag \\
	& \quad {} 
	+ \bigl( \pi(u) -\pi(u_\alpha),u-u_\alpha \bigr)_{L^2(\Omega)} = (\mu-\mu_\alpha,u-u_\alpha)_{L^2(\Omega)},
	\label{rate2}
\end{align}
where we should take care that $\partial_{\boldsymbol{\nu}} u =0$ and 
$\partial_{\boldsymbol{\nu}} u_\alpha =\alpha(\phi_\alpha-u_\alpha)$
a.e.\ on $\Sigma$. 
Moreover, testing the difference of the Allen--Cahn equation on the boundary of  
$\phi$
and \eqref{dbc} by $\phi-\phi_\alpha$, we have that
\begin{align}
	& \frac{1}{2} \frac{\d}{\dt} \| \phi- \phi_\alpha \|_{L^2(\Gamma)}^2 
	 +  \int_\Gamma \bigl| \nabla _\Gamma (\phi- \phi_\alpha) \bigr|^2 \dg  
	 + (\psi-\psi_\alpha, \phi- \phi_\alpha)_{L^2(\Gamma)}
	 \notag \\
	 & \quad {} + 
	 \bigl( \pi_\Gamma(\phi) -\pi_\Gamma(\phi_\alpha), \phi- \phi_\alpha \bigr)_{L^2(\Gamma)}
	- \alpha (\phi_\alpha-u_\alpha, \phi- \phi_\alpha)_{L^2(\Gamma)}=0. \
	\label{rate3}
\end{align}
We merge \eqref{rate1}, \eqref{rate2}, and \eqref{rate3} to infer that
\begin{align*}
	&\frac{1}{2} \frac{\d}{\dt} \|u-u_\alpha \|_{*}^2 
	+ \frac{1}{2} \frac{\d}{\dt} \| \phi-\phi_\alpha \|_{L^2(\Gamma)}^2 
	+ \bigl\| \nabla(u-u_\alpha) \bigr\|_{L^2(\Omega)}^2 
	\notag \\
	& \quad {} +
	\int_\Gamma \bigl| \nabla _\Gamma (\phi- \phi_\alpha) \bigr|^2 \dg  
	+ \alpha \|  \phi_\alpha - u_\alpha  \|_{L^2(\Gamma)}^2 
	\notag \\
	& \le C_{\rm L} \bigl( \| u-u_\alpha \|_{L^2(\Omega)}^2 + \| \phi- \phi_\alpha \|_{L^2(\Gamma)}^2 \bigr)
	+ \alpha(u_\alpha-\phi_\alpha,u-\phi)_{L^2(\Gamma)}
	\notag \\
	& \le C_{\rm L} \bigl( \delta  \bigl\| \nabla(u-u_\alpha) \bigr\|_{L^2(\Omega)}^2  + C_\delta \| u-u_\alpha \|_{*}^2 + \| \phi- \phi_\alpha \|_{L^2(\Gamma)}^2 \bigr)
	+ \frac{\alpha}{2} \|  \phi_\alpha - u_\alpha  \|_{L^2(\Gamma)}^2
	\notag \\
	& \quad {}
	+  \frac{\alpha}{2} \Bigl( C_{\rm tr}\| u \|_{L^\infty(0,T;H^1(\Omega))}^2 + \| \phi \|_{L^\infty(0,T;L^2(\Gamma))}^2 \Bigr)
\end{align*}
a.e.\ on $(0,T)$, where we used the monotonicity of $\beta$ and $\beta_\Gamma$ and the inequality~\eqref{EL} for $ u-u_\alpha$. Hence,
we can choose $ \delta = 1/(2C_L)$, then integrate over $(0,t)$, $t\in (0,T]$, with the help of the null initial conditions.
We obtain the final inequality 
\begin{align*}
	&\frac{1}{2} \bigl\| (u-u_\alpha)(t) \bigr\|_{*}^2 
	+ \frac{1}{2} \bigl\| (\phi-\phi_\alpha)(t) \bigr\|_{L^2(\Gamma)}^2 
	+ \frac12 \int_0^t\!\!\int_\Omega \bigl| \nabla(u-u_\alpha) \bigr|^2 \dx\ds
	\notag \\
	& \quad {} +
	\int_0^t\!\!\int_\Gamma \bigl| \nabla _\Gamma (\phi- \phi_\alpha) \bigr|^2 \dg\ds 
	+ \frac{\alpha}2 \int_0^t\!\!\int_\Gamma | \phi_\alpha - u_\alpha |^2  \dg\ds
	\notag \\
	& \le C \biggl( \int_0^t  \| u-u_\alpha \|_{*}^2\ds  + \int_0^t \| \phi- \phi_\alpha \|_{L^2(\Gamma)}^2\ds \biggr)
	\notag \\
	& \quad {}
	+  \frac{\alpha}{2} T \Bigl( C_{\rm tr}\| u \|_{L^\infty(0,T;H^1(\Omega))}^2 + \| \phi \|_{L^\infty(0,T;L^2(\Gamma))}^2 \Bigr)
\end{align*}
for some constant $C$ independent of $\alpha$.
At this point, we can apply the Gronwall lemma to conclude that \eqref{pier8} holds true.
\end{proof}

\subsection{Asymptotic behavior related to full permeability}
Formally, by taking the limit $K \to 0$ in the third type transmission condition \eqref{RBC}, one formally recovers the {D}irichlet trace condition $u = \phi $ on $\Sigma$. 
More precisely, this boundary condition is understood in the sense of traces, namely
\begin{equation*}
	\gamma \, u(t) = \phi(t) \quad \text{in } H^{1/2}(\Gamma), \quad \text{for a.a.\ } t \in (0,T).
\end{equation*}
We now proceed to rigorously analyze this asymptotic limit.
\smallskip

In order to discuss the limiting procedure as $K \to 0$, it is reasonable to assume that
\begin{enumerate}
	\item[(A7)] $u_0= \phi_0$ \ a.e.\ on $\Gamma$.
\end{enumerate}
Under the assumption {\rm (A4)} we have $u_0 \in H^1(\Omega)$, 
therefore the trace of $u_0$ makes sense, 
that is, using the trace operator 
$\gamma:H^1(\Omega) \to H^{1/2}(\Gamma)$ the above assumption 
can be rewritten rigorously as $\gamma \, u_0 =\phi_0$ in $H^{1/2} (\Gamma)$. 
\smallskip

Next, as suggested by the proof of Lemma~\ref{L3}, it is natural to take \( m_0' = m_0 \). To this end, we assume that
\begin{enumerate}
	\item[(A8)] $D(\beta_\Gamma)=D(\beta)$ and there exist constants $\rho_1 \ge 1$ and 
$c_1 >0$ such that
\begin{equation*}
	\frac{1}{\rho_1} \bigl| \beta_\Gamma ^\circ (r) \bigr|-c_1 
	\le \bigl| \beta ^\circ (r) \bigr|
	\le \rho_1 \bigl| \beta_\Gamma ^\circ (r) \bigr|+c_1
	\quad {\rm for~all~} r \in D(\beta_\Gamma)=D(\beta).
\end{equation*}
\end{enumerate}
Here the minimal section $\beta^\circ$ of multivalued graph $\beta$ is defined by 
$\beta^\circ(r):=\{ r^* \in \beta(r) : |r^*|=\min_{s \in \beta(r)} |s|\}$. 
$\beta^\circ_\Gamma$ is defined in the same way. 
Moreover, (A8) is the same as in the previous contribution \cite[cf.~Lemmas A.1 and A.2]{CF20}: in fact, 
under the assumption (A8) it turns out that the same inequalities hold at the level of Moreau--Yosida regularizations
\begin{equation}
	\frac{1}{\rho_1} \bigl| \beta_{\Gamma,\lambda} (r) \bigr|-c_1 
	\le \bigl| \beta_\lambda (r) \bigr|
	\le \rho_1 \bigl| \beta_{\Gamma,\lambda} (r) \bigr|+c_1
	\quad {\rm for~all~} r \in \mathbb{R},
	\label{NODEA} 
\end{equation}
for $\lambda \in (0,1]$. 
Hereafter, we additionally assume {\rm (A7)--(A8)}. 
\smallskip

We now state our main theorem. 
\begin{theorem}
\label{K0} 
Assume {\rm (A1)}--{\rm (A8)}. Let $(u_K, \xi_K, \mu_K,\phi_K, \psi_K)$ 
be the solution to \eqref{clu}--\eqref{ic2} obtained in {\rm Theorem~\ref{lambda}}. 
Then, there exist unique functions 
\begin{gather*}
	u \in H^1 \bigl( 0,T;H^1(\Omega)' \bigr) \cap L^\infty \bigl( 0,T ; H^1(\Omega) \bigr) \cap L^2 \bigl( 0,T ; H^2(\Omega) \bigr), \\
	\phi \in H^1 \bigl( 0,T;L^2(\Gamma) \bigr) \cap L^\infty \bigl( 0,T ; H^1(\Gamma) \bigr) \cap L^2 \bigl( 0,T ; H^2(\Gamma) \bigr), \\
	\psi \in L^2 \bigl( 0,T ; L^2(\Gamma) \bigr),
\end{gather*}
and the functions 
\begin{equation*}
	\xi \in  L^2 \bigl( 0,T ; L^2(\Omega) \bigr), \quad 
	\mu  \in L^2 \bigl( 0,T ; H^1(\Omega) \bigr)
\end{equation*}
such that the quintuple $(u,\xi, \mu, \phi, \psi)$
satisfies the following Cahn--Hilliard system in the bulk $Q$ with the 
dynamic boundary condition of Allen--Cahn type on the boundary $\Sigma$:
\begin{align*}
	\langle \partial _t u, y \rangle_{H^1(\Omega)', H^1(\Omega)} +\int_\Omega \nabla \mu \cdot \nabla y \dx = 0 \quad {\it for~all~} y \in H^1(\Omega), & \quad {\it a.e.\ on~}(0,T), \\
	-\Delta u + \xi + \pi (u) - f =\mu, \quad \xi \in \beta(u) & \quad {\it a.e.\ in~} Q, \\
	u = \phi & \quad {\it a.e.\ on~} \Sigma, \\
	\partial _t \phi  
	- \Delta_\Gamma \phi
	+ \psi + \pi_\Gamma (\phi)+\partial_{\boldsymbol{\nu}} u
	=f_\Gamma, \quad 
	\psi \in \beta_{\Gamma} (\phi) & \quad {\it a.e.\ on~} \Sigma,
	\\
	u(0) = u_0 & \quad {\it a.e.\ in~}\Omega,\\
	\phi(0) = \phi_0 & \quad {\it a.e.\ on~} \Gamma. 
\end{align*}
Moreover, it holds that
\begin{align}
	u_{K} \to u & \quad \mbox{\it weakly~star~in~} 
	H^1 \bigl( 0,T;H^1(\Omega)' \bigr) \cap L^\infty \bigl( 0,T ; H^1(\Omega) \bigr), 
	\label{conv1}\\ 
	& \quad \mbox{\it strongly~in~} C\bigl( [0,T]; L^2(\Omega) \bigr),
	\label{conv2}\\
	\Delta u_{K} \to \Delta u & \quad \mbox{\it weakly~in~} 
	L^2 \bigl( 0,T ; L^2(\Omega) \bigr), 
	\label{conv3}\\
	\partial_{\boldsymbol{\nu}} u_{K} \to \partial_{\boldsymbol{\nu}} u & \quad \mbox{\it weakly~in~} 
	L^2 \bigl( 0,T ; H^{-1/2}(\Gamma) \bigr), 
	\label{conv4}\\
	\phi_K \to \phi & \quad \mbox{\it weakly~star~in~} H^1 \bigl( 0,T;L^2(\Gamma) \bigr) \cap L^\infty \bigl( 0,T ; H^1(\Gamma) \bigr), 
	\label{conv5}\\
	& \quad \mbox{\it strongly~in~} C\bigl( [0,T]; L^2(\Gamma) \bigr), 
	\label{conv6}\\
	\Delta_\Gamma \phi_K \to \Delta_\Gamma \phi & \quad \mbox{\it weakly~in~} L^2 \bigl( 0,T;H^{-1/2}(\Gamma) \bigr), 
	\label{conv7}\\
	\psi_K \to \psi & \quad \mbox{\it weakly~in~} L^2 \bigl( 0,T ; L^2(\Gamma) \bigr) 
	\quad \mbox{\it as~} K \to 0,\label{conv8}
\end{align}
and there exists a subsequence $\{K_n\}$ 
such that
\begin{align}
	\xi_{K_n} \to \xi & \quad \mbox{\it weakly~in~}  L^2 \bigl( 0,T ; L^2(\Omega) \bigr), 
	\label{conv9}\\ 
	\mu_{K_n} \to \mu & \quad \mbox{\it weakly~in~}  L^2 \bigl( 0,T ; H^1(\Omega) \bigr)
	\quad \mbox{\it as~} n \to +\infty.
	\label{conv10}
\end{align}
Additionally, the following rate of convergence occurs as $K\to 0$:
\begin{gather}
	\| u-u_K \|_{C([0,T];H^1(\Omega)') \cap L^2(0,T;H^1(\Omega))}  
	 + 
	\| \phi-\phi_K \|_{C([0,T];L^2(\Gamma)) \cap L^2(0,T;H^1(\Gamma))}
	= O(K^{1/2}),
	\label{takeshi1} \\
     \| u_K-\phi_K \|_{L^2(0,T; L^2(\Gamma))} 
	= O(K).
\label{pier13}
\end{gather} 
\end{theorem}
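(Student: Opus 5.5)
The plan is to derive estimates on $(u_K,\xi_K,\mu_K,\phi_K,\psi_K)$ that do not depend on $K\in(0,1]$, extract weakly convergent subsequences, pass to the limit, and finally prove the rate by a difference argument.

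\textbf{Uniform bounds.} Under (A7) the factor $\|u_0-\phi_0\|_{L^2(\Gamma)}/\sqrt K$ in Lemma~\ref{L1} vanishes. Hence \eqref{est1}--\eqref{est2} pass to the limit $\lambda\to0$ with bounds independent of $K$, and these include
\[
K^{-1/2}\|u_K-\phi_K\|_{L^\infty(0,T;L^2(\Gamma))}\le M .
\]
Because (A8) gives $D(\beta_\Gamma)=D(\beta)$, we may take $m_0'=m_0\in{\rm int}D(\beta_\Gamma)$ in Lemma~\ref{L2}. Then Lemmas~\ref{L2}--\ref{L3} bound $\mu_K$ in $L^2(0,T;H^1(\Omega))$ uniformly in $K$.

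Lemmas~\ref{L4}--\ref{L5} cannot be used, since they blow up as $K\to0$. They are replaced by a combined estimate at the $\lambda$-level. We test \eqref{ch2lam} by $\beta_\lambda(u_\lambda)$ and \eqref{dbclam} by $\beta_{\Gamma,\lambda}(\phi_\lambda)$ and add the results. The coupling terms collect into
\[
\frac1K\int_\Gamma(u_\lambda-\phi_\lambda)\bigl(\beta_\lambda(u_\lambda)-\beta_{\Gamma,\lambda}(\phi_\lambda)\bigr)\,\dg ,
\]
which is not sign-definite because $\beta_\lambda\neq\beta_{\Gamma,\lambda}$. This is where \eqref{NODEA} enters. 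We write $\beta_\lambda(u_\lambda)-\beta_{\Gamma,\lambda}(\phi_\lambda)=[\beta_\lambda(u_\lambda)-\beta_\lambda(\phi_\lambda)]+[\beta_\lambda(\phi_\lambda)-\beta_{\Gamma,\lambda}(\phi_\lambda)]$. The first bracket gives a nonnegative term by monotonicity. For the second we use $|\beta_\lambda(\phi_\lambda)|\le\rho_1|\beta_{\Gamma,\lambda}(\phi_\lambda)|+c_1$ together with Young's inequality, which produces
\[
\frac{C}{K^2}\|u_\lambda-\phi_\lambda\|^2_{L^2(\Gamma)}+\tfrac14\|\beta_{\Gamma,\lambda}(\phi_\lambda)\|^2_{L^2(\Gamma)} .
\]
This step is the main obstacle, because it needs the sharper bound $\|u_\lambda-\phi_\lambda\|_{L^2(0,T;L^2(\Gamma))}=O(K)$, which is \eqref{pier13}.

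To get it, I would first obtain $\phi_\lambda$ in $L^2(0,T;H^2(\Gamma))$ and $\partial_{\boldsymbol\nu}u_\lambda$ bounded in $L^2(0,T;L^2(\Gamma))$ uniformly. Testing \eqref{ch2lam} by $-\Delta u_\lambda$ and \eqref{dbclam} by $-\Delta_\Gamma\phi_\lambda$ makes the boundary terms combine into $\frac1K\|\nabla_\Gamma(u_\lambda-\phi_\lambda)\|^2\ge0$, plus terms controlled through the trace of $u_\lambda$ in $H^{1/2}(\Gamma)$. If this fails, the fallback is the Robin-type elliptic estimate for the system \eqref{ch2lam}, \eqref{rbclam}, \eqref{dbclam} viewed as a coupled elliptic problem with bounded right-hand sides, namely $\mu_\lambda$, $\partial_t\phi_\lambda$, $f$, $f_\Gamma$. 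It should yield $\|\partial_{\boldsymbol\nu}u_\lambda\|_{L^2(\Sigma)}\le C$ uniformly in $K$. Since $K^{-1}(u_\lambda-\phi_\lambda)=-\partial_{\boldsymbol\nu}u_\lambda$, this gives \eqref{pier13}. With \eqref{pier13} in hand, the $\beta$-test above closes, and comparison in the equations bounds $\xi_K$, $\psi_K$, $\Delta u_K$, $\Delta_\Gamma\phi_K$ and $u_K$ in $L^2(0,T;H^2(\Omega))$.

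\textbf{Passage to the limit.} We apply the Aubin--Lions lemma exactly as in the proof of Theorem~\ref{lambda}, and use demi-closedness of $\beta$ and $\beta_\Gamma$. The bound $\|u_K-\phi_K\|_{L^2(\Sigma)}\le CK$ gives $u=\phi$ on $\Sigma$. Adding \eqref{rbc} to \eqref{dbc} removes the $1/K$ term and gives $\partial_t\phi_K-\Delta_\Gamma\phi_K+\psi_K+\pi_\Gamma(\phi_K)+\partial_{\boldsymbol\nu}u_K=f_\Gamma$. Weak convergence in $H^{-1/2}(\Gamma)$, via the generalized Green formula as in \eqref{Green}, then yields the limit boundary equation. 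Uniqueness of $(u,\phi,\psi)$ is obtained as in Theorem~\ref{lambda}: we test with $\mathcal N\bar u$, with $\bar u$, and with $\bar\phi=\bar u|_\Gamma$, and the boundary terms cancel since $u=\phi$. Uniqueness then upgrades the subsequence convergences to convergence of the whole family.

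\textbf{Rate \eqref{takeshi1}.} We subtract the limit system from the $K$-system, test the first equation by $\mathcal N(u-u_K)$, the second by $u-u_K$, and the boundary equation by $\phi-\phi_K$. The boundary terms reduce to
\[
\int_\Gamma\partial_{\boldsymbol\nu}(u-u_K)\bigl[(u-u_K)-(\phi-\phi_K)\bigr]\,\dg ,
\]
which equals $\int_\Gamma\partial_{\boldsymbol\nu}(u-u_K)\,(\phi_K-u_K)\,\dg$. By the uniform $L^2(\Sigma)$ bounds on $\partial_{\boldsymbol\nu}u$ and $\partial_{\boldsymbol\nu}u_K$ and by \eqref{pier13}, this term is $O(K)$ in $L^1(0,T)$. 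The estimate then closes with \eqref{EL} and the Gronwall lemma, exactly as in the proof of Theorem~\ref{a0}, and gives $O(K^{1/2})$.
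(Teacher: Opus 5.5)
There is a genuine gap in your uniform bounds. You correctly see that everything hinges on a $K$-independent bound for $K^{-1}\|u_\lambda-\phi_\lambda\|_{L^2(0,T;L^2(\Gamma))}=\|\partial_{\boldsymbol\nu}u_\lambda\|_{L^2(0,T;L^2(\Gamma))}$. Neither route you propose establishes it.

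Testing \eqref{ch2lam} by $-\Delta u_\lambda$ and \eqref{dbclam} by $-\Delta_\Gamma\phi_\lambda$ does not produce $\frac1K\|\nabla_\Gamma(u_\lambda-\phi_\lambda)\|^2_{L^2(\Gamma)}$. The boundary test yields $\frac1K\int_\Gamma\nabla_\Gamma(\phi_\lambda-u_\lambda)\cdot\nabla_\Gamma\phi_\lambda\,\dg$, but nothing in the bulk test supplies the partner term $\frac1K\int_\Gamma\nabla_\Gamma(u_\lambda-\phi_\lambda)\cdot\nabla_\Gamma u_\lambda\,\dg$. Worse, the bulk term $\int_\Omega\beta_\lambda(u_\lambda)(-\Delta u_\lambda)\dx$ has two fates. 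Either it needs the very $L^2$ bound on $\beta_\lambda(u_\lambda)$ you are trying to prove, or, after integration by parts, it reproduces the unsigned coupling $\frac1K\int_\Gamma\beta_\lambda(u_\lambda)(u_\lambda-\phi_\lambda)\,\dg$.

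Your fallback elliptic estimate is circular for the same reason. The right-hand sides of that elliptic system also contain $\beta_\lambda(u_\lambda)$ and $\beta_{\Gamma,\lambda}(\phi_\lambda)$, which are not yet controlled. In addition, a $K$-uniform $L^2(\Gamma)$ bound on the normal derivative for the Robin problem would itself need a proof. Consequently the $\beta$-bounds, \eqref{pier13}, the uniform $H^2$ bounds on $u_K$, and your rate argument (which takes \eqref{pier13} and an $L^2(\Sigma)$ bound on $\partial_{\boldsymbol\nu}u_K$ as inputs) all remain unproved.

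The missing idea is small. Keep your bulk test by $\beta_\lambda(u_\lambda)$, but test \eqref{dbclam} by $\beta_\lambda(\phi_\lambda)$ instead of $\beta_{\Gamma,\lambda}(\phi_\lambda)$. That is, use the bulk Yosida approximation evaluated at the boundary unknown. Then:
\begin{itemize}
\item The coupling terms combine into $\frac1K\int_\Gamma(u_\lambda-\phi_\lambda)\bigl(\beta_\lambda(u_\lambda)-\beta_\lambda(\phi_\lambda)\bigr)\dg\ge0$, so only the good bracket of your decomposition ever appears.
\item \eqref{NODEA} is needed only for the non-coupling product. Since the two factors have the same sign, $\beta_{\Gamma,\lambda}(\phi_\lambda)\beta_\lambda(\phi_\lambda)\ge\frac1{2\rho_1}|\beta_\lambda(\phi_\lambda)|^2-\frac{c_1^2}{2\rho_1}$.
\item The time-derivative term becomes $\frac{\d}{\dt}\int_\Gamma\hat\beta_\lambda(\phi_\lambda)\dg$, and $\hat\beta(\phi_0)\in L^1(\Gamma)$ by (A5)--(A6).
\end{itemize}
This gives $K$-uniform $L^2$ bounds on $\beta_\lambda(u_\lambda)$ and $\beta_\lambda(\phi_\lambda)$. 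By \eqref{NODEA} and comparison, it also bounds $\beta_{\Gamma,\lambda}(\phi_\lambda)$ and $\Delta u_\lambda$. The quantities $\partial_{\boldsymbol\nu}u_\lambda$ and $\Delta_\Gamma\phi_\lambda$ are then bounded only in $L^2(0,T;H^{-1/2}(\Gamma))$, which is enough to pass to the limit.

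The $H^2$ regularity is bootstrapped on the limit problem, not obtained from uniform bounds. First $\phi\in L^2(0,T;H^{3/2}(\Gamma))$. Then the Dirichlet elliptic estimate with $u=\phi$ on $\Sigma$ gives $u\in L^2(0,T;H^2(\Omega))$. Finally, comparison gives $\phi\in L^2(0,T;H^2(\Gamma))$.

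Lastly, \eqref{pier13} should be an output, not an input. In the difference estimate, the $u_K$-part of your boundary term is exactly $\frac1K\|u_K-\phi_K\|^2_{L^2(\Gamma)}$, which has a good sign. The remaining term $\int_\Gamma\partial_{\boldsymbol\nu}u\,(u_K-\phi_K)\dg$ is absorbed by Young's inequality, using only $\partial_{\boldsymbol\nu}u\in L^2(0,T;L^2(\Gamma))$ for the limit $u$. Gronwall's lemma then yields \eqref{takeshi1} and \eqref{pier13} at once.
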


The proof of this theorem is given later. Thanks to \rm (A7), and choosing $m_0'=m_0 \in {\rm int\/} D(\beta_\Gamma)$, we have already obtained uniform estimates in 
Lemmas~\ref{L1}, \ref{L2}, and \ref{L3}, that~is, 
\begin{gather}
	\sqrt{\lambda}  \| \partial_t u_\lambda\|_{L^2(0,T;L^2(\Omega))} 
	+ \| u_\lambda\|_{L^\infty(0,T;H^1(\Omega))} 
	+ \frac{1}{\sqrt{K}} \| u_\lambda-\phi_\lambda \|_{L^\infty(0,T;L^2(\Gamma))} 
	\notag \\
	\quad {}
	+ \| \phi_\lambda \|_{H^1(0,T;L^2(\Gamma))}
	+ \| \phi_\lambda \|_{L^\infty(0,T;H^1(\Gamma))}
	\le M_1,
	\label{est1K}
	\\
	\| \partial _t u_\lambda \|_{L^2(0,T;H^1(\Omega)')} \le M_2,
	\label{est2K}
	\\
	\| \mu_\lambda \|_{L^2(0,T;H^1(\Omega))} 
	\le M_5,
	\label{est5K}
\end{gather}
where $M_1$, $M_2$, and $M_5$ are independent of $\lambda \in (0,1]$ and $K>0$, because 
the terms with factors $1/\sqrt{K}$ and $1/K$ disappear by virtue of the assumptions.  
Moreover, in order to prove Theorem~\ref{K0} we are going to add the following estimates.

\begin{lemma}\label{L}
Under the assumptions {\rm (A1)--(A8)}, there exists a positive constant $M_9$, independent of $\lambda \in (0,1]$ 
and $K>0$, such that
\begin{equation}
	\bigl\| \beta_\lambda (u_\lambda) \bigr\|_{L^2(0,T;L^2(\Omega))}
	+ 
	\bigl\| \beta_\lambda (\phi_\lambda) \bigr\|_{L^2(0,T;L^2(\Gamma))}
	\le M_9.
	\label{est9K}
\end{equation}
\end{lemma}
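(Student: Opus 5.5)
We have to bound $\beta_\lambda(u_\lambda)$ in $L^2(Q)$ and $\beta_\lambda(\phi_\lambda)$ in $L^2(\Sigma)$ uniformly in $K$. Estimate \eqref{est9} from Lemma~\ref{L5} degenerates as $K\to0$, so the boundary terms in \eqref{trick} must be handled differently. The plan is to test the bulk and boundary equations by the \emph{same} monotone function, $\beta_\lambda$, so that the coupling terms carrying the factor $1/K$ combine into a nonnegative quantity and can be discarded.

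We multiply \eqref{ch2lam} by $\beta_\lambda(u_\lambda)$ and integrate over $\Omega$. Using \eqref{rbclam}, this gives
\begin{equation*}
\int_\Omega \beta_\lambda'(u_\lambda)|\nabla u_\lambda|^2\dx+\|\beta_\lambda(u_\lambda)\|_{L^2(\Omega)}^2-\frac1K\int_\Gamma(\phi_\lambda-u_\lambda)\beta_\lambda(u_\lambda)\dg=\bigl(\mu_\lambda-\lambda\partial_tu_\lambda-\pi(u_\lambda)+f,\beta_\lambda(u_\lambda)\bigr)_{L^2(\Omega)}.
\end{equation*}
Next we multiply \eqref{dbclam} by $\beta_\lambda(\phi_\lambda)$, which is Lipschitz and hence admissible in $H^1(\Gamma)$, and integrate over $\Gamma$. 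This produces $\int_\Gamma\beta_\lambda'(\phi_\lambda)|\nabla_\Gamma\phi_\lambda|^2\dg\ge0$, the coupling term $\frac1K\int_\Gamma(\phi_\lambda-u_\lambda)\beta_\lambda(\phi_\lambda)\dg$, and the cross term $\int_\Gamma\beta_{\Gamma,\lambda}(\phi_\lambda)\beta_\lambda(\phi_\lambda)\dg$. Adding the two identities, the coupling terms combine into
\begin{equation*}
\frac1K\int_\Gamma(\phi_\lambda-u_\lambda)\bigl(\beta_\lambda(\phi_\lambda)-\beta_\lambda(u_\lambda)\bigr)\dg\ge0
\end{equation*}
by monotonicity of $\beta_\lambda$, and we drop this term. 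This cancellation is the key point where the uniformity in $K$ is gained.

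The main obstacle is the cross term, which has no sign a priori. Here we use \eqref{NODEA}, which gives $|\beta_\lambda(r)|\ge\rho_1^{-1}|\beta_{\Gamma,\lambda}(r)|-c_1$. Since $\beta_\lambda$ and $\beta_{\Gamma,\lambda}$ are monotone and vanish at $0$, they have the same sign pointwise. Therefore
\begin{equation*}
\beta_{\Gamma,\lambda}(r)\beta_\lambda(r)=|\beta_{\Gamma,\lambda}(r)||\beta_\lambda(r)|\ge\frac1{\rho_1}|\beta_\lambda(r)|^2-\frac{c_1}{\rho_1}|\beta_\lambda(r)|,
\end{equation*}
using the upper bound in \eqref{NODEA} in the form $|\beta_{\Gamma,\lambda}|\ge(|\beta_\lambda|-c_1)/\rho_1$. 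This yields a coercive term $\rho_1^{-1}\|\beta_\lambda(\phi_\lambda)\|_{L^2(\Gamma)}^2$ up to a linear remainder.

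On the right-hand sides we apply Young's inequality. On $\Omega$ we need $\|\mu_\lambda-\lambda\partial_tu_\lambda-\pi(u_\lambda)+f\|_{L^2(\Omega)}^2$. On $\Gamma$ we need $\|f_\Gamma-\partial_t\phi_\lambda-\pi_\Gamma(\phi_\lambda)\|_{L^2(\Gamma)}^2$ together with the constant $c_1^2|\Gamma|$. The $\beta_\lambda$-terms are absorbed into the left-hand side, with coefficients $\tfrac12$ in the bulk and $\tfrac1{2\rho_1}$ on the boundary. Integrating over $(0,T)$ and invoking \eqref{est1K}, \eqref{est5K}, the bound $\lambda\le1$, (A3), and the Lipschitz continuity of $\pi,\pi_\Gamma$ together with the $L^\infty(0,T;H^1)$ bounds, all these quantities are bounded independently of $\lambda$ and $K$. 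This gives \eqref{est9K}.

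Note that (A7) and the choice $m_0'=m_0$ ensure that the constants $M_1$ and $M_5$ carry no $1/K$ factors. Consequently, no estimate of $\beta_{\Gamma,\lambda}(\phi_\lambda)$ from Lemma~\ref{L4} is needed, and the $K$-dependent constant $C(K)$ never enters. As a byproduct, \eqref{NODEA} then also bounds $\beta_{\Gamma,\lambda}(\phi_\lambda)$ in $L^2(0,T;L^2(\Gamma))$ uniformly.
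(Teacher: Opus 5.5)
Your proposal is correct and follows the paper's proof essentially step by step. You test \eqref{dbclam} by $\beta_\lambda(\phi_\lambda)$ rather than $\beta_{\Gamma,\lambda}(\phi_\lambda)$ and combine it with \eqref{ch2lam} tested by $\beta_\lambda(u_\lambda)$, so the $1/K$ coupling terms merge into the nonnegative monotonicity term, and you bound the cross term from below via the upper half of \eqref{NODEA} and the common sign of $\beta_\lambda,\beta_{\Gamma,\lambda}$. The only cosmetic difference is that the paper keeps $(\partial_t\phi_\lambda,\beta_\lambda(\phi_\lambda))_{L^2(\Gamma)}$ as $\frac{\d}{\dt}\int_\Gamma\hat\beta_\lambda(\phi_\lambda)\dg$, which requires $\hat\beta(\phi_0)\in L^1(\Gamma)$ via \eqref{pier11}; you instead move this term to the right-hand side using the $H^1(0,T;L^2(\Gamma))$ bound in \eqref{est1K}, which works equally well.
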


\begin{proof}
	Multiplying \eqref{dbclam} by $\beta_{\lambda}(\phi_\lambda)$ (please note that it is not $\beta_{\Gamma, \lambda}(\phi_\lambda)$),
	integrating it over $\Gamma$, and using \eqref{rbclam} we deduce for all $\delta >0$ that
\begin{align}
	& 
	\frac{\d}{\dt} \int_\Gamma \hat{\beta}_{\lambda}(\phi_\lambda) \dg
	+
	\int_\Gamma \beta_{\lambda}' (\phi_\lambda) |\nabla_\Gamma \phi_\lambda|^2 \dg  
	+
	\int_\Gamma \beta_{\Gamma,\lambda}(\phi_\lambda) \beta_\lambda(\phi_\lambda) \dg 
	\notag 
	+ 
	\int_\Gamma \partial _{\boldsymbol{\nu}} u_\lambda \beta_{\lambda}(\phi_\lambda)
	\dg 
	\\
	& \le 
	\frac{1}{2\delta} \bigl\| f_\Gamma -\pi_\Gamma (\phi_\lambda) \bigr\|_{L^2(\Gamma)}^2 + 
	\frac{\delta}{2}
	\bigl\| \beta_{\lambda} (\phi_\lambda) \bigr\|_{L^2(\Gamma)}^2. 
	\label{trick2}
\end{align}
Since the signs of $\beta_{\Gamma,\lambda}(r)$ and $\beta_{\lambda}(r)$ coincide, we obtain the following estimate from \eqref{NODEA}~that 
\begin{align}
	\int_\Gamma \beta_{\Gamma,\lambda}(\phi_\lambda)\beta_\lambda(\phi_\lambda)  \dg 
	& = \int_\Gamma \bigl| \beta_{\Gamma,\lambda}(\phi_\lambda) \bigl| \bigl|\beta_\lambda(\phi_\lambda) \bigr| \dg  \notag \\
	& \ge \int_\Gamma \left( \frac{1}{\rho_1} \bigl| \beta_{\lambda}(\phi_\lambda) \bigr|-\frac{c_1}{\rho_1} \right)  \bigl|\beta_\lambda(\phi_\lambda) \bigr| \dg \notag \\
	& \ge \int_\Gamma 
	\left\{ \frac{1}{\rho_1} \bigl| \beta_{\lambda} (\phi_\lambda) \bigr|^2 
	-\left( \frac{1}{2\rho_1} \bigl| \beta_{\lambda} (\phi_\lambda) \bigr|^2
	+\frac{1}{2\rho_1}c_1^2 \right)
	\right\} \dg \notag \\
	& \ge \frac{1}{2\rho_1} \bigl\| \beta_{\lambda} (\phi_\lambda) \bigr\|_{L^2(\Gamma)}^2
	- \frac{c_1^2}{2\rho_1} |\Gamma|. \label{mg1}
\end{align}
Moreover, using the monotonicity of $\beta_{\lambda}$, the boundary condition~\eqref{rbclam} and equation~\eqref{ch2lam} we infer that
\begin{align}
	& \int_\Gamma \partial _{\boldsymbol{\nu}} u_\lambda \beta_{\lambda}(\phi _\lambda)
	\dg 
	= \int_\Gamma \partial _{\boldsymbol{\nu}} u_\lambda 
	\bigl( \beta_{\lambda}(\phi _\lambda)-\beta_{\lambda}(u_\lambda) \bigr)
	\dg 
	+ \int_\Gamma \partial _{\boldsymbol{\nu}} u_\lambda \beta_{\lambda}(u_\lambda)
	\dg \notag \\
	& 
	= 
	\frac{1}{K}
	\int_\Gamma 
		( \phi_\lambda-u_\lambda) 
	\bigl( \beta_{\lambda}(\phi_\lambda)-\beta_{\lambda}(u_\lambda) \bigr)
	\dg 
	+\int_\Gamma \partial _{\boldsymbol{\nu}} u_\lambda \beta_{\lambda}(u_\lambda)
	\dg 
	\ge \int_\Gamma \partial _{\boldsymbol{\nu}} u_\lambda \beta_{\lambda}(u_\lambda)
	\dg \notag \\
	& \ge \int_\Omega \beta_\lambda' (u_\lambda) |\nabla u_\lambda |^2 \dx +
	\bigl(\lambda \partial_t u_\lambda + \pi(u_\lambda) -f -\mu_\lambda, \beta_\lambda (u_\lambda) \bigr)_{L^2(\Omega)}
	+ \bigl\| \beta_\lambda(u_\lambda) \bigr\|_{L^2(\Omega)}^2.
	\label{mg2}
\end{align}
Therefore, using \eqref{mg1} and \eqref{mg2} in \eqref{trick2}, we deduce that
\begin{align*}
	& 
	\frac{\d}{\dt} \int_\Gamma \hat{\beta}_{\lambda}(\phi_\lambda) \dg
	+
	\frac{1}{2\rho_1} \bigl\| \beta_{\lambda} (\phi_\lambda) \bigr\|_{L^2(\Gamma)}^2
	\notag 
	+ 
	\bigl\| \beta_\lambda(u_\lambda) \bigr\|_{L^2(\Omega)}^2 
	\\
	& \le 
	\frac{c_1^2}{2\rho_1} |\Gamma|
	+
	\frac{1}{2\delta} \bigl\| f_\Gamma -\pi_\Gamma (\phi_\lambda) \bigr\|_{L^2(\Gamma)}^2 + 
	\frac{\delta}{2}
	\bigl\| \beta_{\lambda} (\phi_\lambda) \bigr\|_{L^2(\Gamma)}^2
	\notag \\
	& \quad {}
	+
	\frac{1}{2}\bigl\| \lambda \partial_t u_\lambda + \pi(u_\lambda) -f - \mu_\lambda \bigr\|_{L^2(\Omega)}^2 
	+ \frac{1}{2} \bigl\| \beta_\lambda
	(u_\lambda) \bigr\|_{L^2(\Omega)}^2 .
\end{align*}
Thus, taking $\delta:=1/(2\rho_1)$ and integrating the above over $(0,t)$ with respect to the time variable, with the help of \eqref{pier11} we find that
 \begin{align*}
	& 
	\int_\Gamma \hat{\beta}_{\lambda} \bigl(\phi_\lambda (t) \bigr) \dg 
	+
	\frac{1}{4\rho_1} \int_0^t \bigl\| \beta_{\lambda} (\phi_\lambda) \bigr\|_{L^2(\Gamma)}^2 \d \tau
	+ 
	\frac{1}{2} \int_0^t \bigl\| \beta_{\lambda} (u_\lambda) \bigr\|^2 _{L^2(\Omega)} \d \tau
	\notag \\
	& \le 
	\bigl\| \hat{\beta} (\phi_0 ) \bigr\|_{L^1(\Gamma)} +
	\frac{c_1^2}{2 \rho_1}|\Gamma|T +
	\rho_1 \int_0^t \bigl\| f_\Gamma -\pi_\Gamma (\phi_\lambda) \bigr\|_{L^2(\Gamma)}^2 \d\tau 
	\notag \\
	& \quad {} + \frac{1}{2} \int_0^t \bigl\| \lambda \partial_t u_\lambda + \pi(u_\lambda) -f - \mu_\lambda \bigr\|_{L^2(\Omega)}^2  \d \tau.
\end{align*}
Here, from the assumption {\rm (A6)} we see that $\hat{\beta} (\phi_0 ) \in L^1(\Gamma)$. 
Therefore, in view of \eqref{est1K} and \eqref{est5K} it follows that there exists a positive constant $M_9$ such that 
\eqref{est9K} holds. 
\end{proof}

\begin{lemma}\label{Lsim}
Under the assumptions {\rm (A1)--(A8)}, there exist positive constants $M_{10}$, $M_{11}$, $M_{12}$ and $M_{13}$,  independent of $\lambda \in (0,1]$ and 
$K>0$, such that
\begin{gather}
	\| \Delta u_\lambda \|_{L^2(0,T;L^2(\Omega))}
	\le M_{10},
	\label{est10K}\\
	\| \partial_{\boldsymbol{\nu}} u_\lambda \|_{L^2(0,T;H^{-1/2}(\Gamma))}
	\le M_{11}, 
	\label{est11K} 
	\\
	\bigl\| \beta_{\Gamma,\lambda} (\phi_\lambda) \bigr\|_{L^2(0,T;L^2(\Gamma))}
	\le M_{12},  
	\label{est12K}\\
	\| \Delta_\Gamma \phi_\lambda \|_{L^2(0,T;H^{-1/2}(\Gamma))}
	\le M_{13}. 
	\label{est13K}
\end{gather}
\end{lemma}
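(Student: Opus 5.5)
The estimates follow by comparison in the equations, using the $K$-independent bounds \eqref{est1K}, \eqref{est2K}, \eqref{est5K} and \eqref{est9K}. They are proved in the order listed in the statement.

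\emph{Bound \eqref{est10K}.} From \eqref{ch2lam} we have $\Delta u_\lambda = \lambda\partial_t u_\lambda + \beta_\lambda(u_\lambda)+\pi(u_\lambda)-f-\mu_\lambda$. Each term on the right is bounded in $L^2(0,T;L^2(\Omega))$ independently of $\lambda$ and $K$:
\begin{itemize}
\item $\lambda\partial_t u_\lambda$ by \eqref{est1K}, since $\lambda\le\sqrt\lambda$;
\item $\beta_\lambda(u_\lambda)$ by \eqref{est9K};
\item $\pi(u_\lambda)$ by Lipschitz continuity and \eqref{est1K};
\item $f$ by (A3);
\item $\mu_\lambda$ by \eqref{est5K}.
\end{itemize}
This gives \eqref{est10K}.

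\emph{Bound \eqref{est11K}.} We use the generalized Green formula \cite[Corollary 2.6]{GR86}, exactly as in \eqref{Green}. For $w\in H^{1/2}(\Gamma)$ it gives $\langle\partial_{\boldsymbol\nu}u_\lambda,w\rangle=\int_\Omega\Delta u_\lambda\,\mathcal R w\dx+\int_\Omega\nabla u_\lambda\cdot\nabla\mathcal R w\dx$. Hence
$\|\partial_{\boldsymbol\nu}u_\lambda\|_{H^{-1/2}(\Gamma)}\le C\bigl(\|\Delta u_\lambda\|_{L^2(\Omega)}+\|u_\lambda\|_{H^1(\Omega)}\bigr)$.
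Squaring, integrating in time and using \eqref{est10K} and \eqref{est1K} yields \eqref{est11K}. This avoids the factor $1/K$ that spoiled \eqref{est11}.

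\emph{Bound \eqref{est12K}.} The relevant estimate for $\beta_{\Gamma,\lambda}$ is the left inequality of \eqref{NODEA}, in the form $|\beta_{\Gamma,\lambda}(r)|\le\rho_1|\beta_\lambda(r)|+\rho_1c_1$. Applying it with $r=\phi_\lambda$ and using the bound on $\beta_\lambda(\phi_\lambda)$ in $L^2(0,T;L^2(\Gamma))$ from \eqref{est9K} gives \eqref{est12K} immediately. This is the step where (A8) is essential, and it is already encoded in Lemma~\ref{L}.

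\emph{Bound \eqref{est13K}.} By \eqref{dbclam} and \eqref{rbclam}, $\Delta_\Gamma\phi_\lambda=\partial_t\phi_\lambda+\beta_{\Gamma,\lambda}(\phi_\lambda)+\pi_\Gamma(\phi_\lambda)-f_\Gamma+\partial_{\boldsymbol\nu}u_\lambda$. The first four terms are bounded in $L^2(0,T;L^2(\Gamma))\hookrightarrow L^2(0,T;H^{-1/2}(\Gamma))$ by \eqref{est1K}, \eqref{est12K}, Lipschitz continuity and (A3). The last term is bounded in $L^2(0,T;H^{-1/2}(\Gamma))$ by \eqref{est11K}, which gives \eqref{est13K}.

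The only delicate point is \eqref{est11K}: the normal derivative can only be controlled in the weak space $H^{-1/2}(\Gamma)$. That is precisely why \eqref{est13K} is stated in $H^{-1/2}(\Gamma)$ rather than $L^2(\Gamma)$. Once the Green-formula duality bound is in place, everything else is routine comparison.
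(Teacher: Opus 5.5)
Your proposal is correct and follows the paper's proof step by step. The paper likewise obtains \eqref{est10K} by comparison in \eqref{ch2lam} using \eqref{est9K}, then \eqref{est11K} from the Green-formula duality bound for $\partial_{\boldsymbol\nu}u_\lambda$ in $H^{-1/2}(\Gamma)$, then \eqref{est12K} from the left inequality of \eqref{NODEA}, and finally \eqref{est13K} by comparison in \eqref{dbclam} after replacing $(1/K)(\phi_\lambda-u_\lambda)$ with $\partial_{\boldsymbol\nu}u_\lambda$ via \eqref{rbclam}. (Squaring your pointwise bound $|\beta_{\Gamma,\lambda}(r)|\le\rho_1|\beta_\lambda(r)|+\rho_1c_1$ gives the factor $2\rho_1^2$, not the $2\rho_1$ printed in the paper, which is immaterial.)
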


\begin{proof} 
Firstly, from the comparison of terms in the equation \eqref{ch2lam}, using the uniform estimate 
\eqref{est9K}
of $\{ \beta_\lambda (u_\lambda)\}$ 
in $L^2(0,T;L^2(\Omega))$ obtained in the previous lemma, we can show~\eqref{est10K}. 
Secondly, recalling the generalized Green formula in the first line of~\eqref{Green}, and 
combining \eqref{est1K} and \eqref{est10K} we conclude \eqref{est11K}. 
Thirdly, the estimate in \eqref{NODEA} gives~us 
\begin{equation*}
	\int_0^T \bigl\| \beta _{\Gamma, \lambda} (\phi_\lambda) \bigr\|_{L^2(\Gamma)}^2 \dt 
	\le 2\rho_1 \int_0^T \bigl\| \beta_\lambda (\phi_\lambda) \bigr\|_{L^2(\Gamma)}^2 \dt + 2(c_1 \rho_1)^2 |\Gamma| T.   
\end{equation*}
Therefore, 
we see from the uniform estimate 
\eqref{est9K}
of $\{ \beta_\lambda (\phi_\lambda)\}$ 
in $L^2(0,T;L^2(\Gamma))$ that there exists a positive constant $M_{12}$ such that 
\eqref{est12K} holds. Finally, from the comparison in the equation \eqref{dbclam}
we can also obtain \eqref{est13K}, where the term 
$(1/K) (\phi_\lambda-u_\lambda)$ is replaced by 
\eqref{rbclam} and the uniform estimate \eqref{est11K} is used. 
\end{proof}

\subsection{Proof of Theorem 4.2.}

Before discussing the limiting procedure \( K \to 0 \), we recall the solution constructed in Theorem~\ref{lambda}. Let $ (u_K, \xi_K, \mu_K, \phi_K, \psi_K) $ denote the solution to \eqref{clu}--\eqref{ic2}. Since we are interested in the limit $ K \to 0 $, in contrast to Theorem~\ref{a0} we rewrite the system in terms of the parameter $ K $ instead of $ \alpha $. This reformulation does not alter the dependence of the solution, as the two parameters are related by $ \alpha = 1/K $.
\smallskip

\begin{proof}
From the additional uniform estimates obtained in Lemmas~\ref{L} and \ref{Lsim}, we see that 
$(u_K, \xi_K, \mu_K,\phi_K, \psi_K)$ retains the same estimates as follows: 
\begin{gather}
	\| u_K \|_{L^\infty(0,T;H^1(\Omega))} \le \liminf_{n \to +\infty} \| u_{\lambda_n} \|_{L^\infty(0,T;H^1(\Omega))} \le M_1, \label{es1}
	\\ 
	\frac{1}{\sqrt{K}} \| u_K-\phi_K \|_{L^\infty(0,T;L^2(\Gamma))} 
	\le \liminf_{n \to +\infty} \frac{1}{\sqrt{K}} \| u_{\lambda_n}-\phi_{\lambda_n} \|_{L^\infty(0,T;L^2(\Gamma))} 
	\le M_1, 
	\label{es2} \\
	\| \phi_K \|_{H^1(0,T;L^2(\Gamma))}
	\le 
	\liminf_{n \to +\infty}
	\| \phi_{\lambda_n} \|_{H^1(0,T;L^2(\Gamma))} \le M_1, 
	\label{es3} \\
	\| \phi_K \|_{L^\infty(0,T;H^1(\Gamma))}
	\le \liminf_{n \to +\infty}
	\| \phi_{\lambda_n} \|_{L^\infty(0,T;H^1(\Gamma))}
	\le M_1,
	\label{es4}
	\\
	\| \partial _t u_K \|_{L^2(0,T;H^1(\Omega)')} 
	\le \liminf_{n \to +\infty}
	\| \partial _t u_{\lambda_n} \|_{L^2(0,T;H^1(\Omega)')} \le M_2,
	\label{es5}
	\\
	\| \mu_K \|_{L^2(0,T;H^1(\Omega))} 
	\le \liminf_{n \to +\infty}
	\| \mu_{\lambda_n} \|_{L^2(0,T;H^1(\Omega))} 
	\le M_5,
	\label{es6}
	\\
	\bigl\| \xi_K \bigr\|_{L^2(0,T;L^2(\Omega))}
	\le \liminf_{n \to +\infty}
	\bigl\| \beta_{\lambda_n} (u_{\lambda_n}) \bigr\|_{L^2(0,T;L^2(\Omega))}
	\le M_9, 
	\label{es7}\\
	\| \Delta u_K \|_{L^2(0,T;L^2(\Omega))}
	\le \liminf_{n \to +\infty}
	\| \Delta u_{\lambda_n} \|_{L^2(0,T;L^2(\Omega))}
	\le M_{10},
	\label{es8}\\
	\| \partial_{\boldsymbol{\nu}} u_K \|_{L^2(0,T;H^{-1/2}(\Gamma))}
	\le \liminf_{n \to +\infty}
	\| \partial_{\boldsymbol{\nu}} u_{\lambda_n} \|_{L^2(0,T;H^{-1/2}(\Gamma))}
	\le M_{11},
	\label{es9}\\
	\bigl\| \psi_K \bigr\|_{L^2(0,T;L^2(\Gamma))}
	\le \liminf_{n \to +\infty}
	\bigl\| \beta_{\Gamma,\lambda_n} (\phi_{\lambda_n}) \bigr\|_{L^2(0,T;L^2(\Gamma))}
	\le M_{12},
	\label{es10}\\
	\| \Delta_\Gamma \phi_K \|_{L^2(0,T;H^{-1/2}(\Gamma))}
	\le \liminf_{n \to +\infty}
	\| \Delta_\Gamma \phi_{\lambda_n} \|_{L^2(0,T;H^{-1/2}(\Gamma))}
	\le M_{13}. 
	\label{es11}
\end{gather}
From the properties~\eqref{es1}--\eqref{es11} we see that there exist a subsequence $\{K_n\}$ and a
quintuple $(u, \xi, \mu,\phi, \psi)$ such that 
\begin{align*}
	u_{K_n} \to u & \quad \mbox{\rm weakly~star~in~} 
	H^1 \bigl( 0,T;H^1(\Omega)' \bigr) \cap L^\infty \bigl( 0,T ; H^1(\Omega) \bigr), 
	\\
	\Delta u_{K_n} \to \Delta u & \quad \mbox{\rm weakly~in~} 
	L^2 \bigl( 0,T ; L^2(\Omega) \bigr), 
	\\
	\partial_{\boldsymbol{\nu}} u_{K_n} \to \partial_{\boldsymbol{\nu}} u & 
	\quad \mbox{\rm weakly~in~} 
	L^2 \bigl( 0,T ; H^{-1/2}(\Gamma) \bigr), 
	\\
	\phi_{K_n} \to \phi & \quad 
	\mbox{\rm weakly~star~in~} H^1 \bigl( 0,T;L^2(\Gamma) \bigr) \cap L^\infty \bigl( 0,T ; H^1(\Gamma) \bigr), 
	\\
	\Delta_\Gamma \phi_{K_n} \to \Delta_\Gamma \phi & \quad \mbox{\rm weakly~in~} L^2 \bigl( 0,T;H^{-1/2}(\Gamma) \bigr), 
	\\
	\psi_{K_n} \to \psi & \quad \mbox{\rm weakly~in~} L^2 \bigl( 0,T ; L^2(\Gamma) \bigr),
	\\
	\xi_{K_n} \to \xi & \quad \mbox{\rm weakly~in~}  L^2 \bigl( 0,T ; L^2(\Omega) \bigr), 
	\\ 
	\mu_{K_n} \to \mu & \quad \mbox{\rm weakly~in~}  L^2 \bigl( 0,T ; H^1(\Omega) \bigr)
	\quad \mbox{\rm as~} n \to +\infty.
\end{align*}
Moreover, thanks to the Aubin--Lions compactness theorems (see, e.g., \cite[Sect.~8, Cor.~4]{Sim87}) and 
Lipschitz continuities, we obtain the strong convergences 
\begin{align*}
	u_{K_n} \to u, 
	\quad 
	\pi(u_{K_n}) \to \pi(u) & \quad \mbox{\rm strongly~in~} C\bigl( [0,T]; L^2(\Omega) \bigr), 
	\\
	\phi_{K_n} \to \phi, 
	\quad 
	\pi_\Gamma(\phi_{K_n}) \to \pi_\Gamma(\phi)
	& \quad \mbox{\rm strongly~in~} C\bigl( [0,T]; L^2(\Gamma) \bigr)
	\quad \mbox{\rm as~} n \to +\infty.
\end{align*}
From the demi-closedness properties of $\beta$ and $\beta_\Gamma$ we derive the relations 
\begin{equation*}
	\xi \in \beta(u) \quad {\rm a.e.~in~} Q, \quad 
	\psi \in \beta_\Gamma(\phi) \quad {\rm a.e.~on~}\Sigma. 
\end{equation*}
Additionally \eqref{es2} implies that 
\begin{equation*}
	u=\phi \quad {\rm a.e.~ on ~} \Sigma.
\end{equation*}
In this level, letting $n \to +\infty$ in the weak formulation of \eqref{dbclam} with \eqref{rbclam} 
we actually deduce the following weak formulation of the dynamic boundary condition:
\begin{equation}
\bigl( 
	\partial _t \phi  + \psi + \pi_\Gamma (\phi), \zeta \bigr) _{L^2(\Gamma)}
	+ \langle - \Delta_\Gamma \phi+\partial_{\boldsymbol{\nu}} u, \zeta \rangle_{H^{-1/2}(\Gamma), H^{1/2}(\Gamma)}
	=(f_\Gamma, \zeta)_{L^2(\Gamma)}
	\label{lastw}
\end{equation}
for all $\zeta \in H^{1/2}(\Gamma)$ and for a.a.\ $t \in (0,T)$, on the reason that both
$-\Delta_\Gamma \phi$ and $\partial _{\boldsymbol{\nu}} u$ make sense in $L^2(0,T;H^{-1/2}(\Gamma))$. 
However, by applying linear interpolation theory, we obtain the additional regularity
$
\phi \in L^2(0,T; H^{3/2}(\Gamma)).
$
As a consequence, we may invoke the elliptic estimate (see, e.g., \cite[Theorem 3.2, p.~1.79]{BG87})
\begin{equation*}
	\| u \|_{H^2(\Omega)} \le C_{\rm E} 
	\bigl( \| \Delta  u \|_{L^2(\Omega)} + \| \phi \|_{H^{3/2}(\Gamma)} \bigr)
\end{equation*}
where $C_{\rm E}>0 $ is a constant. Here, we have used the boundary condition $u=\phi$ on $\Sigma$. 
This estimate gives the additional regularities 
\begin{equation*}
	u \in L^2 \bigl( 0,T;H^2(\Omega) \bigr), \quad 
	\partial _{\boldsymbol{\nu}} u \in L^2 \bigl( 0,T; H^{1/2}(\Gamma) \bigr).
\end{equation*}
Moreover, from the subsequent comparison in \eqref{lastw} we infer that
\begin{equation*}
	-\Delta_\Gamma \phi \in  L^2 \bigl( 0,T; L^2(\Gamma) \bigr), \quad 
	\phi \in L^2 \bigl( 0,T; H^2(\Gamma) \bigr).
\end{equation*}
Thus, the proof of the existence part is complete. 
\smallskip

The uniqueness of $ (u, \phi, \psi) $ can be shown by arguments analogous to those used in the proof of Theorem~\ref{lambda}; therefore, we omit the details (see also \cite[Theorem~2.2]{CGS14} or \cite[Theorem~2.1]{CF15pier} for related arguments).
As a consequence of uniqueness, the convergences \eqref{conv1}--\eqref{conv8} for $ u_{K_n} $, $ \phi_{K_n} $, and $ \psi_{K_n} $ hold for the entire sequence.
\smallskip

Finally, we prove the rate of convergence. 
The difference between Theorem~\ref{a0} is the treatment of the boundary condition. 
First, taking care of the fact 
$m(u(t)-u_K(t))=0$ for all $t \in [0,T]$, we deduce 
\begin{equation}
	\frac{1}{2} \frac{\d}{\dt} \| u-u_K \|_{*}^2 
	 +  ( \mu-\mu_K, u-u_K)_{L^2(\Omega)}=0
	\label{rate4}
\end{equation}
a.e.\ on $(0,T)$, based on the choice of the test function 
$y := \mathcal{N} (u-u_K)\in H^1(\Omega)$.
In addition, we have that
\begin{align}
	 & \bigl\| \nabla(u-u_K) \bigr\|_{L^2(\Omega)}^2 
	- \bigl(\partial_{\boldsymbol{\nu}}(u-u_K), u-u_K \bigr)_{L^2(\Gamma)} +
	(\xi-\xi_K, u-u_K)_{L^2(\Omega)} 
	\notag \\
	& \quad {} 
	+ \bigl( \pi(u) -\pi(u_K),u-u_K \bigr)_{L^2(\Omega)} = (\mu-\mu_K,u-u_K)_{L^2(\Omega)}.
	\label{rate5}
\end{align}
Moreover, testing the difference of the dynamic boundary condition of Allen--Cahn type of  
$\phi$ and 
\begin{equation*}
	\partial _t \phi_K - \Delta_\Gamma \phi_K + \psi_K + \pi_\Gamma (\phi_K) + \partial_{\boldsymbol{\nu}} u_K  =f_\Gamma \quad \hbox{a.e.\ on~} \Sigma 
\end{equation*}
(resulting from \eqref{dbc} and \eqref{rbc}), we infer that
\begin{align}
	& \frac{1}{2} \frac{\d}{\dt} \| \phi- \phi_K \|_{L^2(\Gamma)}^2 
	 +  \int_\Gamma \bigl| \nabla _\Gamma (\phi- \phi_K) \bigr|^2 \dg  
	 + (\psi-\psi_K, \phi- \phi_K)_{L^2(\Gamma)}
	 \notag \\
	 & \quad {} + 
	 \bigl( \pi_\Gamma(\phi) -\pi_\Gamma(\phi_K), \phi- \phi_K \bigr)_{L^2(\Gamma)}
	+ \bigl(\partial_{\boldsymbol{\nu}}(u-u_K), \phi- \phi_K \bigr)_{L^2(\Gamma)}=0. \
	\label{rate6}
\end{align}
Take the sum of \eqref{rate4}, \eqref{rate5}, and \eqref{rate6}.  
Using the monotonicity of $\beta$ and $\beta_\Gamma$, the condition $u=\phi$ a.e.\ on $\Sigma$, and again~\eqref{rbc}, we deduce that
\begin{align*}
	& \frac{1}{2} \frac{\d}{\dt} \|u-u_K \|_{*}^2 
	+ \frac{1}{2} \frac{\d}{\dt} \| \phi-\phi_K \|_{L^2(\Gamma)}^2 
	+ \bigl\| \nabla( u-u_K) \bigr\|_{L^2(\Omega)}^2
	\notag \\
	& \quad {} +
	\int_\Gamma \bigl| \nabla _\Gamma (\phi- \phi_K) \bigr|^2 \dg  
	+ \frac{1}{K} \| u_K-\phi_K \|_{L^2(\Gamma)}^2 
		\notag \\
	& \le - \bigl( \pi(u) -\pi(u_K),u-u_K \bigr)_{L^2(\Omega)} 
	- \bigl( \pi_\Gamma(\phi) -\pi_\Gamma(\phi_K), \phi- \phi_K \bigr)_{L^2(\Gamma)}
	\notag \\
	& \ \quad {}
	- \bigl(\partial_{\boldsymbol{\nu}}u, u_K - \phi_K \bigr)_{L^2(\Gamma)}
	\notag \\
	& \le C_{\rm L} \Bigl( \delta \bigl\| \nabla( u-u_K) \bigr\|_{L^2(\Omega)}^2 +  C_\delta \| u-u_K \|_{*}^2 
	+ \| \phi- \phi_K \|_{L^2(\Gamma)}^2 \Bigr)
	\notag \\
	& \ \quad {}	
	+ \frac{K}{2} \| \partial_{\boldsymbol{\nu}} u \|_{L^2(\Gamma)}^2 + \frac{1}{2K} \| u_K-\phi_K \|_{L^2(\Gamma)}^2
\end{align*}
a.e.\ on $(0,T)$. Now, we can fix $\delta:=1/(2C_{\rm L})$, settle also the last term, then integrate with respect to time  and apply the Gronwall lemma to end up with \eqref{takeshi1} and \eqref{pier13}.
\smallskip

Then, Theorem~4.2 is completely proved.
\end{proof}

\section*{Acknowledgments}
This work was supported by the Research Institute for Mathematical Sciences, an International Joint Usage/Research Center located at Kyoto University. 
In addition, P.C.\ acknowledges the support of the Next Generation EU Project No.\ P2022Z7ZAJ (\emph{A unitary mathematical framework for modelling muscular dystrophies}) and of the GNAMPA (Gruppo Nazionale per l'Analisi Matematica, la Probabilit\`a e le loro Applicazioni) of INdAM (Istituto Nazionale di Alta Matematica). 
T.F.\ also acknowledges support from the JSPS KAKENHI Grant-in-Aid for Scientific Research (C), Japan, Grant Number 21K03309.

\end{document}